\documentclass[10pt]{amsart}
\usepackage[a4paper,margin=1.00in]{geometry}
\usepackage{amsmath,amssymb,amsthm}
\usepackage{xcolor} 
\usepackage{enumitem}

\usepackage[colorlinks=true,urlcolor=blue,
citecolor=red,linkcolor=blue,linktocpage,pdfpagelabels,
bookmarksnumbered,bookmarksopen]{hyperref}

\usepackage[hyperpageref]{backref}

\usepackage{etoolbox}
\makeatletter
\patchcmd{\@settitle}
  {\uppercasenonmath\@title}
  {}{}{}
\patchcmd{\@setauthors}
  {\MakeUppercase{\authors}}
  {\authors}{}{}
\makeatother



\hypersetup{
  pdftitle={Uniqueness of prescribed-node radial bound states for a zero-mass double-power equation},
  pdfauthor={Wenjing Chen and Zexi Wang},
  pdfsubject={Prescribed-node uniqueness for radial bound states of a zero-mass double-power equation},
  pdfkeywords={zero-mass equation, radial bound state, prescribed-node uniqueness, shooting method, phase transition, inverse-branch separation}
}

\allowdisplaybreaks[2]
\numberwithin{equation}{section}
\newtheorem{theorem}{Theorem}[section]
\newtheorem{proposition}[theorem]{Proposition}
\newtheorem{lemma}[theorem]{Lemma}
\newtheorem{corollary}[theorem]{Corollary}
\theoremstyle{definition}

\newcommand{\R}{\mathbb R}
\newcommand{\dd}{\,\mathrm d}
\newcommand{\abs}[1]{\lvert#1\rvert}

\newcommand{\sgn}{\operatorname{sgn}}

\title[Prescribed-node radial bound states]{
Uniqueness of prescribed-node radial bound states for a zero-mass double-power equation}
\author{Wenjing Chen and Zexi Wang\textsuperscript{\dag}}
\date{}
\address{School of Mathematics and Statistics, Southwest University,
Chongqing 400715, China}
\email{wjchen@swu.edu.cn}\email{zxwangmath@163.com}
\thanks{\textsuperscript{\dag}Corresponding author. E-mail address:
zxwangmath@163.com.}

\thanks{The research has been supported by National Natural Science Foundation of China (No. 12671143).}

\subjclass[2020]{35J61, 35B05, 34C10, 35B40}
\keywords{zero-mass double-power equation, radial bound states, prescribed-node uniqueness, shooting method, phase-transition analysis, finite-level separation}

\begin{document}
\begin{abstract}
We study radial bound states of
\[
 \Delta u-\abs{u}^{q-1}u+\abs{u}^{p-1}u=0
 \quad\text{in }\R^n,
 \qquad n\ge3,\qquad 1<q<p<\frac{n+2}{n-2}.
\]
For every integer $k\ge1$, we prove that there is exactly one radial
bound state with $k$ simple zeros and $u(0)>0$. Its absolute maximum is
attained at the origin. This confirms the zero-mass, Sobolev-subcritical
case of Tang's double-power uniqueness conjecture.
The proof adapts Tang's phase-transition analysis and combines it with a finite-level separation argument inspired by Cort\'azar--Garc\'ia-Huidobro--Yarur. Three main difficulties arise in the zero-mass double-power
setting: the sign changes of key comparison quantities
in the low-amplitude region, the possible failure of
pointwise positivity of an auxiliary quantity when
$n=3$ and $p+q<3$, and the absence of exponential
decay at infinity.
We overcome these difficulties by introducing bridge
functionals to propagate the required sign information
through low amplitudes, establishing a weighted integral
positivity estimate in the exceptional three-dimensional
range, and combining a radial-flux tail estimate with
finite-level inverse-branch comparison.
We also establish the strict ordering of the bound-state shooting heights and show that the shooting variation along the \(k\)-node state has exactly \(k+1\) simple zeros.
\end{abstract}
\maketitle

\section{Introduction and main result}

We consider the semilinear scalar-field equation
\begin{equation}\label{eq:PDE}
 \Delta u+f(u)=0\quad\text{in }\R^n,
 \qquad
 f(u)=-\abs{u}^{q-1}u+\abs{u}^{p-1}u,
 \qquad 1<q<p,
\end{equation}
in dimension $n\ge3$. For a radial solution $u=u(r)$, $r=\abs{x}$,
\eqref{eq:PDE} reduces to the shooting problem
\begin{equation}\label{eq:shoot}
 u''+\frac{n-1}{r}u'+f(u)=0,
 \qquad u(0)=\alpha>0,
 \qquad u'(0)=0.
\end{equation}
A nontrivial solution of \eqref{eq:shoot} with $u(r)\to0$ as
$r\to\infty$ is called a \emph{bound state}. It is a $k$-node bound
state if it has exactly $k$ simple zeros in $(0,\infty)$. The positive
$0$-node state is the \emph{ground state}. We ask whether each nodal
class contains exactly one profile with $u(0)>0$.

The equation is in the zero-mass regime because $f'(0)=0$.
For the massive model $f(u)=-u+\abs{u}^{p-1}u$, the nondegenerate
linearization at zero gives exponential decay.
In addition,
the quantities $uf'(u)-f(u)$ and $uf(u)-2F(u)$ change sign at low
amplitudes. Both differences affect the comparison argument.

The existence theory is classical and, by now, rather complete.  Solitary
waves in higher dimensions were studied by Strauss \cite{Strauss1977}, and
the foundational work of Berestycki and Lions
\cite{BL1983I,BL1983II} established the existence of ground states and of
infinitely many radial bound states under general scalar-field assumptions.
For radial bound states, Jones and K\"upper
\cite{JonesKupper1986} used dynamical-systems methods, while McLeod, Troy and
Weissler \cite{MTW1990} gave a direct shooting theorem producing solutions
with any prescribed number of zeros.  Their hypotheses apply to the
Sobolev-subcritical double-power family considered here.  In the zero-mass
setting, Flucher and M\"uller \cite{FlucherMuller1998} established radial
symmetry and algebraic decay for ground states.  Related recent
zero-mass multiplicity results, including nodal solutions on exterior
domains, can be found in \cite{ClappCulebro2026}. 

Uniqueness is considerably subtler than existence.  For ground states,
classical shooting and separation ideas were developed by Peletier--Serrin
\cite{PeletierSerrin1983}, McLeod--Serrin \cite{McLeodSerrin1987}, and
subsequently by Chen--Lin \cite{ChenLin1991},  Kwong--Li \cite{KwongLi1992},
McLeod \cite{McLeod1993}, Erbe--Tang \cite{ErbeTang1997}, Cort\'azar--Elgueta--Felmer \cite{CEF1998}, Pucci--Serrin \cite{PS98} and Serrin--Tang
\cite{SerrinTang2000}; see also the celebrated pure-power theorem of Kwong
\cite{Kwong1989}.  For the zero-mass double-power equation, the structure of ground states was investigated by Kwong, McLeod, Peletier and
Troy \cite{KMPT1992}.
Lewin and Rota Nodari \cite{LewinRotaNodari2020} studied uniqueness and
nondegeneracy of positive solutions for a different double-power
structure, including $g(s)=s^q-s^p-\mu s$ with $p>q>1$ and $\mu>0$.
The sign of the higher-power term differs from that in \eqref{eq:PDE},
and their result does not address its nodal states. Related work on
positive double-power stationary Schr\"odinger equations appears in
\cite{SongZou2026}.
Meanwhile, uniqueness does not follow generically from standard existence hypotheses: multiple ground states arise for related nonlinearities \cite{DavilaDelPinoGuerra2013,NN85}, and multiple radial bound states with a prescribed nodal number may even exist for suitable semilinear equations \cite{CGHH2015}. Any uniqueness theorem for nodal states thus requires exploiting fine structural properties of the nonlinearity and the shooting flow.

Cort\'azar, Garc\'ia-Huidobro and Yarur \cite{CGHY2009} proved
uniqueness for a radial bound state with one zero. Their later
Theorem~1.1 in \cite{CGHY2011} treats arbitrary prescribed nodal numbers
in dimensions $2\le n\le4$ under, among other hypotheses,
\[
 \left(\frac{F}{f}\right)'(s)\ge\frac{n-2}{2},\qquad s>\beta.
\]
Their method establishes a key property for nearby
shooting heights and then uses the topology of the shooting sets.
For double powers with $p>q>0$, the cited condition holds in dimension
two; the three-dimensional range given in \cite{CGHY2011} is
$p^2+q^2\le1$. It cannot cover the present range $1<q<p$, since
\[
 \lim_{s\to\infty}\left(\frac{F}{f}\right)'(s)
 =\frac1{p+1}<\frac12\le\frac{n-2}{2},\qquad n\ge3.
\]
Their alternative double-power condition for the one-node state,
$p+q\le2/(n-2)$, is also incompatible with $1<q<p$ when $n\ge3$.

A major recent advance was made by Tang \cite{Tang2026} for the massive
pure-power equation
\begin{equation*}
 \Delta u-u+\abs{u}^{p-1}u=0,
 \qquad
 1<p<\frac{n+2}{n-2},
 \qquad n\ge3.
\end{equation*}
By analyzing the shooting variation \(v=\partial_\alpha u\), Tang developed
a phase-transition mechanism that propagates the zero/sign pattern of \(v\)
through successive nodal phases, leading to the uniqueness of the bound
state with any prescribed number of nodes.  This settled, for the massive
pure-power model, the long-standing Berestycki--Lions conjecture on uniqueness of bound states with a prescribed number of nodes; related
earlier results include \cite{AoWeiYao2016,CohenLiSchlag2024}.  More
recently, related shooting and variation arguments have been developed for
the logarithmic Schr\"odinger equation \cite{LiuSunZou2026} and for
sublinear elliptic equations \cite{ZZ26}.  Tang further conjectured
prescribed-node uniqueness for several other nonlinearities. In particular, the conjecture for the
double-power nonlinearity can be stated as follows.

\medskip
\noindent\textbf{Tang's conjecture (double-power case).}
If $f$ is given by
\begin{equation*}
 f(u)=-\abs{u}^{q-1}u+\abs{u}^{p-1}u,
 \qquad 0<q<p,
\end{equation*}
then $\Delta u+f(u)=0$ admits at most one
radial $k$-node bound state with its absolute maximum located
at the origin of $\mathbb{R}^n$, for any given $k \geq1$ and $n\geq2$.
\medskip

We prove prescribed-node uniqueness throughout the zero-mass,
Sobolev-subcritical range in dimensions $n\ge3$. Thus we confirm this
part of Tang's conjecture. 

\begin{theorem}\label{thm:main}
Assume
\begin{equation}\label{eq:main-range}
 n\ge3,
 \qquad
 1<q<p<\frac{n+2}{n-2}.
\end{equation}
Then, for every integer $k\ge1$, problem \eqref{eq:PDE} possesses exactly
one radial $k$-node bound state $u_k=u_k(r)$ satisfying $u_k(0)>0$.
\end{theorem}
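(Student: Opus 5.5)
Existence is classical: the hypotheses of McLeod--Troy--Weissler \cite{MTW1990} (or Berestycki--Lions \cite{BL1983II}) cover $f(u)=-\abs{u}^{q-1}u+\abs{u}^{p-1}u$ with $1<q<p<\frac{n+2}{n-2}$. So for every $k\ge1$ there is at least one $\alpha>0$ such that the solution $u(\cdot,\alpha)$ of \eqref{eq:shoot} is a $k$-node bound state. The work is uniqueness, which I will obtain by a shooting/topological argument. I partition $(\beta,\infty)$, where $\beta=1$ is the positive zero of $F(s)=\frac{s^{p+1}}{p+1}-\frac{s^{q+1}}{q+1}$ (necessarily $\alpha>\beta$ by the energy $E=\frac12u'^2+F(u)$, which is nonincreasing), into the open sets
\[
N_k=\{\alpha: u \text{ has exactly } k \text{ zeros and } \inf E<0 \text{ after the } k\text{th zero}\}
\]
and the closed set $G_k$ of $k$-node bound states. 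The standard facts are that $N_k$ and $P_k$ (exactly $k$ zeros, then a $(k+1)$st zero with $E>0$) are open, that bound states are their common boundary points, and that each $k$-node bound state has $E\to0$ with algebraic decay $u\sim c\,r^{2-n}$. The last fact uses $f'(0)=0$ together with Flucher--M\"uller type tail estimates.

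The core is the variation $v=\partial_\alpha u$, which solves $v''+\frac{n-1}rv'+f'(u)v=0$ with $v(0)=1$ and $v'(0)=0$. Following Tang \cite{Tang2026}, I will show that along any $k$-node bound state $v$ has exactly $k+1$ simple zeros and satisfies $\liminf\abs{v}>0$ in a quantified sense, namely $v\,r^{n-2}$ has a nonzero limit or grows. This makes $\alpha\mapsto u(\cdot,\alpha)$ transversal, so that $G_k$ consists of isolated points. Each such point is then an exit from $P_k$ into $N_k$ in a fixed direction as $\alpha$ increases, and connectedness forces $G_k$ to be a single point. The $k+1$ zeros are propagated phase by phase. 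In the $j$th nodal interval between zeros $z_{j-1},z_j$ of $u$, I compare $v$ with the Sturm comparison functions $w_1=ru'$, $w_2=u$, $w_3=ru'+\lambda u$, which satisfy $L w_2=uf'(u)-f(u)$ and $L(ru'+\tfrac2{p-1}u)$ of a sign controlled by $\frac{2}{p-1}f-\dots$. This shows that $v$ has exactly one zero between consecutive extrema and that at $z_j$ the sign of $v$ and of the Wronskian-type quantity $W=r^{n-1}(vu'-uv')$ follow the pattern needed for the next phase. Pohozaev-type identities, combining $\int r^{n-1}(uf-2F)$ with $\int r^{n-1}F$, enter here, and subcriticality gives the needed positive sign of $2nF-(n-2)uf$ at large amplitude.

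The main obstacle is the low-amplitude region and the final phase. In the massive case $uf'-f>0$ everywhere, but here $uf'(u)-f(u)=(p-1)\abs{u}^{p}\sgn u\cdot(\dots)$ and the corresponding expression for $uf-2F$ change sign at the points $s_1<s_2<\beta$. Pointwise Sturm comparison therefore fails on the subintervals where $\abs{u}$ is small. My plan is to define interpolating (``bridge'') functionals $B_\lambda=r^{n-1}\bigl(v(ru'+\lambda u)'-(ru'+\lambda u)v'\bigr)$, with $\lambda$ moving continuously from $\frac2{q-1}$ to $\frac2{p-1}$. I will prove monotonicity of $B_\lambda$ across the region where $\abs u$ crosses $s_1,s_2$, using energy bounds on $u'^2$ there. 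If the pointwise sign of the resulting integrand fails (I expect this when $n=3$, $p+q<3$), I will instead prove positivity of a weighted integral $\int r^{n-1}\phi(u)u'^2$ via integration by parts against $E$. For the tail, the exponential decay is replaced by the radial flux $r^{n-1}u'\to-(n-2)c$. I will show that $v$ cannot acquire a $(k+2)$nd zero at infinity by comparing the inverse branches $r=r(u)$ of two nearby shooting solutions on finitely many levels $u\in(0,\epsilon)$, in the spirit of \cite{CGHY2011}. This yields the strict ordering $\alpha_k<\alpha_{k+1}$ and hence uniqueness.
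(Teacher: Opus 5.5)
You have the right overall architecture (variation $v=\partial_\alpha u$, phase-by-phase zero counting, a terminal step, then one-dimensional shooting topology), but the two steps that carry the proof are missing or would fail as stated.

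\textbf{The terminal step.} Your tail asymptotics $u\sim c\,r^{2-n}$ with $r^{n-1}u'\to-(n-2)c\neq0$ are false in part of the range. On the terminal branch $Z=-r^{n-1}u'$ decreases to some $Z_\infty\ge0$. If $Z_\infty>0$, then $u\ge Z_\infty r^{2-n}/(n-2)$, so $Z'=r^{n-1}f(u)\le -C\,r^{n-1-q(n-2)}$. This is not integrable when $q\le n/(n-2)$ (e.g.\ $n=3$, $q\le3$), so there $u=o(r^{2-n})$.

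More seriously, the dichotomy ``$v\,r^{n-2}$ has a nonzero limit or grows'' is not transversality. The first alternative means $v$ decays like $u$, which gives no crossing or turning information. Neither alternative carries a sign that fixes the same exit direction at every bound state. With your labels, the direction you state is in fact reversed: heights just below a $k$-node height do not cross again, and heights just above do.

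The paper uses no asymptotics of $v$ at all. The phase induction yields, at the fixed regular level $\abs{u}=1$ of the normalized terminal branch, $v(r_*)<0$ and $B_0(r_*)>0$. Here $B_0=Q-2F(u)\,r^{n-1}v/u'$ and $Q=r^n(u'v'+f(u)v)+(n-2)r^{n-1}u'v$. For the inverse radius $R(\alpha)$ of that level and $W=R\sqrt{u'^2+2F(1)}$, the identities $R'(\alpha)=-v/u'$ and $\partial_\alpha W=B_0/(R^{n-1}\sqrt{u'^2+2F(1)})$ convert these signs into a strict ordering $r_1(1)>r_2(1)$, $W_1(1)<W_2(1)$ for nearby heights $\alpha_1<\alpha_2$. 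That ordering is then pushed down to $s\downarrow0$ using only $F<0$ on $(0,\beta)$ and $r\abs{u'}\to0$. Your closing remark about comparing inverse branches at low levels points this way, but without a strict ordering at some fixed level there is nothing to propagate.

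\textbf{The global step.} The connectedness argument needs an induction on the number of zeros, started from uniqueness of the ground state (the paper uses the Serrin--Tang criterion for this). Your three sets do not partition $(\beta,\infty)$; they partition only the heights whose solution has at least $k$ zeros, and that set must first be shown to be an interval $(\alpha_{k-1},\infty)$.

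\textbf{A correction on $\beta$.} $\beta\neq1$: $F(1)<0$ and $\beta=((p+1)/(q+1))^{1/(p-q)}>1$. The strip $1<\abs{u}<\beta$, where $f>0>F$, is exactly where the low-amplitude difficulty sits.

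\textbf{The finite-phase step.} This is asserted rather than derived. For $u>0$ and constant $\lambda\in[2/(p-1),2/(q-1)]$,
\[
L(ru'+\lambda u)=(\lambda(p-1)-2)u^p+(2-\lambda(q-1))u^q,
\]
so $B_\lambda'=r^{n-1}v\,L(ru'+\lambda u)$ simply has the sign of $uv$. It therefore transports information only where the sign of $v$ is already known. It says nothing about the data that must be renewed at the next critical point.

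The missing idea is an invariant at critical points. The paper carries $Q(c)>0$, $M(c)>0$ and $T_2(c)>0$, where $M=r^{n-1}(u'v-uv')$ (your $W$) and $T_2=Q-g_2(u)M$ with $g_2=2F/(uf-2F)$. Within a phase:
\begin{itemize}
\item the Wronskian of $u'$ and $v'$ forces a zero of $v$ before $\abs{u}=1$;
\item monotonicity of $T_2$ together with a connection identity excludes a second zero before $\abs{u}=\beta$;
\item $B_0$ carries $Q_1>0$ through $\abs{u}<\beta$, since $B_0'=-2F(u)Q_n/(ru'^2)$ and $B_0=-2r^{n-1}(v/u')E<0$ at a first zero of $Q_1$.
\end{itemize}
The crux is renewing $T_2(c_+)>0$. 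One shows that $T_2'$ has at most one zero $t^+$ past the level $u=-\beta$. One proves the equal-amplitude comparison $\Phi(r^+_\mu)>\Phi(r^-_\mu)$ for $\Phi=Q/(r^{n-1}\abs{u'})$. Finally, with $t^-$ the prezero point where $u(t^-)=\abs{u(t^+)}$, one integrates $B_a'=-2F_a(u)Q_n/(ru'^2)$ over $[t^-,t^+]$, where $F_a=F-\tfrac a2(uf-2F)$ and $a=g_2(\abs{u(t^+)})$; this choice makes $-2F_a\ge0$ there. Nothing in your plan plays this role.

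Likewise, a weighted integral $\int r^{n-1}\phi(u)u'^2\dd r$ contains no $v$. It cannot replace what is actually needed when $n=3$, $p+q<3$, namely
\[
\int_b^\tau\bigl(-2F_a(u)\bigr)\frac{Q_3}{ru'^2}\dd r>0 .
\]
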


Lemma~\ref{lem:nodal-geometry} shows that the absolute values of
successive extrema strictly decrease. Hence the absolute maximum of
$u_k$ is attained at the origin. 
Corollary~\ref{cor:shooting-structure} records the strict ordering of their
shooting heights and the fact that the shooting variation has exactly \(k+1\) simple zeros.


The proof relies on three key ingredients. First, the bridge functionals $B_0$ and $B_a$, whose derivatives are governed by the same comparison quantity $Q_n$, propagate the required sign information through the low-amplitude region $\abs{u}<\beta$. Second, in the three-dimensional residual range $p+q<3$, $Q_3$ need not be pointwise positive. We overcome this difficulty by establishing a weighted integral estimate that retains the factor $u^{q-1}$. A quantitative separation bound involving $\beta^{q-1}$ and $\kappa^2$, together with the parameter estimate \eqref{eq:kappa-weight-bound}, completes the finite-phase induction.

Finally, the radial flux provides the terminal estimate $r\abs{u'(r)}\to0$, while the bridge identity establishes transversality at $\abs{u}=1$. At this fixed level, the inverse radius decreases and the inverse-branch quantity $W$ increases with the shooting height, as follows from \eqref{eq:Ralpha} and \eqref{eq:Walpha}. We then propagate this strict ordering to the terminal endpoint for every $n\ge3$, obtaining the local shooting separation property and completing the global uniqueness argument without requiring the shooting variation to diverge at infinity.



The two-dimensional case follows from Cort\'azar--Garc\'ia-Huidobro--Yarur \cite{CGHY2011}, while the endpoint $q=1$ in dimensions $n\ge3$ is covered by Tang's theorem \cite{Tang2026}. Combining these results with Theorem~\ref{thm:main}, we obtain the following unified statement.

\begin{corollary}
Let $n\ge2$ and assume that
\[
\begin{cases}
1\le q<p<\infty, \qquad& n=2,\\
1\le q<p<\dfrac{n+2}{n-2}, \qquad& n\ge3.
\end{cases}
\]
Then, for every integer $k\ge1$, the equation
\[
\Delta u-\abs{u}^{q-1}u+\abs{u}^{p-1}u=0
\quad\text{in }\R^n
\]
possesses exactly one radial $k$-node bound state satisfying $u(0)>0$.
\end{corollary}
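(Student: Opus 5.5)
This corollary is a case assembly, so the plan is to split the admissible parameter set into three regions and quote one uniqueness theorem in each. The regions are: $n=2$; $n\ge3$ with $q=1$; and $n\ge3$ with $1<q<p<\frac{n+2}{n-2}$. In the last region the statement is exactly Theorem~\ref{thm:main}. That theorem already gives existence and uniqueness of the radial $k$-node bound state with $u(0)>0$, so nothing further is needed there.

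For $n\ge3$, $q=1$, the nonlinearity is $f(u)=-u+\abs{u}^{p-1}u$ with $1<p<\frac{n+2}{n-2}$. This is precisely the massive pure-power model in Tang's theorem \cite{Tang2026}. Tang's result gives uniqueness of the $k$-node radial bound state with $u(0)>0$. Existence for every $k$ comes from the shooting theorem of McLeod--Troy--Weissler \cite{MTW1990}, whose hypotheses cover this subcritical family, or equivalently from Berestycki--Lions \cite{BL1983II}.

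For $n=2$ and $1\le q<p<\infty$, I will invoke Theorem~1.1 of \cite{CGHY2011}, which treats arbitrary nodal numbers for $2\le n\le4$. The main check is the structural condition $(F/f)'(s)\ge \frac{n-2}{2}=0$ for $s>\beta$, where $\beta=1$ is the positive zero of $f$. Write
\[
\frac{F}{f}(s)=\frac{\frac{s^{p+1}}{p+1}-\frac{s^{q+1}}{q+1}}{s^p-s^q}.
\]
Differentiating, the sign of $(F/f)'$ equals the sign of $f^2-Ff'$. I plan to show this is positive for $s>1$ by expanding in powers of $s$. Here $f>0$ and $F$ may be negative near $1$, which only helps. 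The cross term should have a sign governed by $\frac{p-q}{(p+1)(q+1)}\cdot(\dots)$, which I expect to be positive. This matches the remark in the introduction that the condition holds in dimension two for all $p>q>0$.

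The remaining hypotheses of \cite{CGHY2011} are growth and regularity conditions on $f$. They should be routine for double powers, since in two dimensions every power is subcritical. Existence for each $k$ again follows from \cite{MTW1990}. The endpoint $q=1$ with $n=2$ is included in this case, so no separate argument is needed there.

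The expected obstacle is only bookkeeping. I must confirm that the normalizations in \cite{CGHY2011} (their $\beta$, and any assumption such as $f$ being $C^1$ at $0$ or behaving like $-s^q$ near zero) admit all $1\le q<p$, including the locally Lipschitz case $q>1$. I must also confirm that their notion of a $k$-node bound state with $u(0)>0$ agrees with the one used here.
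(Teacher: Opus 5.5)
Your proposal is correct and follows the paper's proof: the same three-way split into $n\ge3,\ q>1$ (Theorem~\ref{thm:main}), $n\ge3,\ q=1$ (\cite{Tang2026}) and $n=2$ (\cite[Theorem~1.1]{CGHY2011}), with existence from \cite{MTW1990}. One caution on your optional check of the \cite{CGHY2011} condition: one has $(p+1)(q+1)(f^2-Ff')=(q+1)s^{2p}+(p+1)s^{2q}+\bigl[(p-q)^2-(p+q+2)\bigr]s^{p+q}$, and the cross coefficient is negative for, e.g., $p=2$, $q=1$, so positivity has to come from AM--GM (it reduces to $\sqrt{p+1}+\sqrt{q+1}\ge1$) rather than from termwise signs; also, the threshold $\beta$ in that condition is the zero of $F$, not of $f$, which is harmless because you check all $s>1$.
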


\begin{proof}
Existence follows from the prescribed-node existence theorem of \cite{MTW1990}. For $n=2$, uniqueness follows from \cite[Theorem~1.1]{CGHY2011}. For $n\ge3$, the cases $q>1$ and $q=1$ follow from Theorem~\ref{thm:main} and \cite{Tang2026}, respectively.
\end{proof}

The paper is organized as follows. Section~\ref{sec2} develops the shooting framework and the key comparison identities. Section~\ref{sec:first} analyzes the first nodal phase and introduces the bridge functionals used in the low-amplitude region. Section~\ref{sec:transition} establishes the finite-phase transition, including a weighted residual estimate for the delicate case $n=3$ and $p+q<3$. Section~\ref{sec:tail}
studies the algebraic tail and proves terminal transversality.
Finally, Section~\ref{sec:global} establishes the finite-level separation property and completes the uniqueness proof.

\section{Shooting preliminaries and comparison identities}\label{sec2}

Throughout this paper, $n\ge3$ and $1<q<p<(n+2)/(n-2)$.
Put
\[
 d:=p-q>0.
\]
For $s>0$,
\begin{equation*}
 f(s)=s^p-s^q=s^q(s^d-1),
 \qquad
 F(s):=\int_0^s f(t)\dd t
 =\frac{s^{p+1}}{p+1}-\frac{s^{q+1}}{q+1}.
\end{equation*}
The positive zero of $f$ is $1$ and the positive zero of $F$ is
\begin{equation*}
 \beta:=\left(\frac{p+1}{q+1}\right)^{\frac{1}{d}}>1.
\end{equation*}
The energy
\begin{equation*}
 E(r):=\frac12 u'(r)^2+F(u(r))
\end{equation*}
satisfies
\begin{equation}\label{eq:Eprime}
 E'(r)=-\frac{n-1}{r}u'(r)^2\le0.
\end{equation}
For a nonconstant solution, $E(r)$ decreases strictly in $(0,\infty)$.  Since $F(s)\to+\infty$ as
$|s|\to\infty$ and $F$ is bounded below, the estimate
$E(r)\le E(0)=F(\alpha)$ bounds both $u$ and $u'$ on every interval of
existence.

The standard continuation theorem therefore shows that every
shooting solution of \eqref{eq:shoot} is defined for all $r\ge0$.
Every zero of a nontrivial solution is simple, because $f$ is locally Lipschitz and a double zero would imply the zero solution by ODE uniqueness.  We record the phase geometry used below.

\begin{lemma}\label{lem:nodal-geometry}
Let $u$ be a nontrivial solution of \eqref{eq:shoot}.
\begin{enumerate}[label=\textup{(\roman*)}]
\item If $z_1$ is the first zero of $u$, then $\alpha=u(0)>\beta$ and
$u'(r)<0$ for $0<r< z_1$.
\item Between two consecutive zeros of $u$, there is exactly one critical point.
\item If $u$ is a bound state, then after its last zero, there is exactly one critical point; beyond it $u$ is monotone and converges to zero.
\item For a bound state, $E(r)>0$ for all $r>0$.
If
$c_-<c_+$ are consecutive critical points separated by a zero, then
$\abs{u(c_-)}>\abs{u(c_+)}>\beta$.
\end{enumerate}
\end{lemma}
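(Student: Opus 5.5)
The plan is to use two ingredients throughout: the energy identity \eqref{eq:Eprime}, which makes $E$ strictly decreasing, and the equation written as $(r^{n-1}u')'=-r^{n-1}f(u)$. The latter shows that $r^{n-1}u'$ is monotone on each interval where $\abs{u}$ stays on one side of $1$. We also use the sign structure $F<0$ on $(0,\beta)$, $F>0$ on $(\beta,\infty)$, and $f<0$ on $(0,1)$, $f>0$ on $(1,\infty)$, with the odd symmetry of $f$ and the evenness of $F$.

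For (i), we argue through the energy. At a zero $z_1$ we have $E(z_1)=\tfrac12u'(z_1)^2>0$, and $E(z_1)<E(0)=F(\alpha)$ because $u$ is nonconstant. Hence $F(\alpha)>0$, which forces $\alpha>\beta$. Near $r=0$ we have $u''(0)=-f(\alpha)/n<0$, so $u'<0$ initially. Suppose $u'$ had a first zero $c\in(0,z_1)$. Then $u(c)\in(0,\alpha)$ and $F(u(c))=E(c)>E(z_1)>0$, so $u(c)>\beta>1$. Since $u$ decreases on $(0,c)$ to a local minimum at $c$, we need $u''(c)\ge0$, i.e. $f(u(c))\le0$. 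This contradicts $u(c)>1$.

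For (ii) and (iv), take consecutive zeros $z_j<z_{j+1}$ with, say, $u>0$ between them; the negative case follows by the symmetry $u\mapsto-u$. There is at least one critical point by Rolle. Any critical point $c$ in $(z_j,z_{j+1})$ has $F(u(c))=E(c)>E(z_{j+1})>0$, so $u(c)>\beta>1$. There $u''(c)=-f(u(c))<0$, so every critical point is a strict local maximum. Two consecutive strict maxima would require an intermediate minimum, which is impossible, so the critical point is unique. For the extrema inequality in (iv), consecutive critical points $c_-<c_+$ separated by a zero satisfy $F(u(c_-))=E(c_-)>E(c_+)=F(u(c_+))$ with both values above $\beta$. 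Since $F$ is even and strictly increasing on $(\beta,\infty)$, this gives $\abs{u(c_-)}>\abs{u(c_+)}>\beta$.

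For the positivity of $E$ in (iv), note that $E$ is decreasing. For a bound state $E(r)\to E_\infty\ge\ldots$, so it suffices to show $E(r)\to0$ as $r\to\infty$; then strict monotonicity gives $E>0$. As $u\to0$ we have $F(u)\to0$, and the bound on $u'$ shows that $E$ has a limit $L$. Then $u'^2\to 2L$, and $u\to0$ forces $L=0$.

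For (iii), after the last zero $z_k$ (or $r=0$ when $k=0$), assume $u>0$. We must have $u'(z_k)>0$, because the zero is simple and $u$ enters the positive side. Since $E>0$, any critical point $c$ beyond $z_k$ has $F(u(c))>0$, so $u(c)>\beta$, and by the argument above it is a strict maximum. Thus there is at most one critical point. If there were none, $u$ would increase forever, contradicting $u\to0$. So there is exactly one, a maximum at $c$. Beyond $c$ there is no further critical point, so $u'<0$ and $u$ decreases monotonically to its limit $0$.

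The only point requiring care is (i), namely excluding a critical point before the first zero and justifying the strict inequality $E(z_1)<F(\alpha)$. Both are handled by the strict decrease of $E$ for nonconstant solutions. A minor subtlety in (iii) is handling a possible degenerate critical point with $u(c)=\beta$; this is excluded because $E(c)>0$ strictly.
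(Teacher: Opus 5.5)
Your proposal is correct and follows essentially the same route as the paper. Strict energy decay forces $\abs{u(c)}>\beta$ at every critical point preceding a zero, and the sign of $u''(c)=-f(u(c))$ makes all such critical points strict extrema of a single type; the limit argument $E\downarrow0$ along the tail then gives $E>0$ and the existence and uniqueness of the terminal critical point. Two small points: the limit of $E$ exists because $E$ is decreasing and bounded below, not because of the bound on $u'$. Also, your parenthetical ``$r=0$ when $k=0$'' in (iii) does not fit the argument, since $u'(0)=0$ rather than $u'>0$, but the statement only concerns a genuine last zero, so nothing is lost.
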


\begin{proof}
Since $z_1$ is simple,
\[
 F(\alpha)=E(0)>E(z_1)=\frac12u'(z_1)^2>0.
\]
Thus $\alpha>\beta$, and radial regularity gives
$u''(0)=-f(\alpha)/n<0$.  Suppose that $u'$ has a first zero $c\in(0,z_1)$.
Then $u'<0$ on $(0,c)$, $u'(c)=0$ and $u''(c)\ge0$.  On the other hand,
$F(u(c))=E(c)>E(z_1)>0$, so $u(c)>\beta$ and
$u''(c)=-f(u(c))<0$.  This contradiction proves \textup{(i)}.

Rolle's theorem gives at least one critical point between consecutive zeros.  At any critical point $c$ preceding a later zero $z$, strict energy decay gives
\[
 F(u(c))=E(c)>E(z)=\frac12u'(z)^2>0,
\]
so $\abs{u(c)}>\beta$. By $u''(c)=-f(u(c))$, a critical point with $u>0$ is therefore a strict local maximum, while one with $u<0$ is a strict local minimum.  Two critical points of the same type would force a critical point of the opposite type between them, which is impossible.  This proves \textup{(ii)}.

After the last zero of a bound state, $u$ has fixed sign and tends to zero.  Since $F(u(r))\to0$ and $E$ decreases, $E(r)$ has a limit $L\ge0$.  If $L>0$, then $u'(r)^2=2(E(r)-F(u(r)))$ is bounded away from zero for all large $r$, contradicting $u(r)\to0$.  Hence $L=0$ and $u'(r)\to0$.  The derivative immediately after the last zero points into the fixed-sign half-line, so it must vanish before the solution can return to zero at infinity.  At such a critical point $E(c)>0$, hence $\abs{u(c)}>\beta$, and the same critical-point argument gives uniqueness.  This proves \textup{(iii)}.

Since $E$ is strictly
decreasing and $E(r)\downarrow0$ along the tail,
we obtain $E(r)>0$ for all $r>0$.  Finally, $E(c_+)<E(c_-)$ and $F$ is
strictly increasing as a function of $\abs{s}$ on $\abs{s}>1$.  Since both
amplitudes exceed $\beta>1$, statement \textup{(iv)} follows.
\end{proof}

\subsection{Pohozaev functionals}
Define
\begin{align*}
 h(s)&:=2nF(s)-(n-2)s f(s),\\
 h_1(s)&:=s f(s)-2F(s),\\
 h_2(s)&:=(n+2)f(s)-(n-2)s f'(s).
\end{align*}
The function $f$ is odd, while $F$, $h$, and $h_1$ are even and $h_2$ is
odd.  These parity properties will be used when consecutive phases are
normalized by reflection.
For $s>0$,
\begin{align}
 h(s)
 &=\frac{n+2-(n-2)p}{p+1}s^{p+1}
 -\frac{n+2-(n-2)q}{q+1}s^{q+1},\label{eq:h-exp}\\
 h_1(s)
 &=\frac{p-1}{p+1}s^{p+1}
 -\frac{q-1}{q+1}s^{q+1},\label{eq:h1-exp}\\
 h_2(s)
 &=\bigl[n+2-(n-2)p\bigr]s^p
 -\bigl[n+2-(n-2)q\bigr]s^q.\label{eq:h2-exp}
\end{align}
Under $q<p<(n+2)/(n-2)$, $h$ and $h_2$ each have exactly one positive zero.  This follows from the positivity of the coefficients in
\eqref{eq:h-exp}--\eqref{eq:h2-exp}.  In particular, the Pohozaev function
\begin{equation*}
 P(r):=r^n\bigl(u'^2+2F(u)\bigr)+(n-2)r^{n-1}uu'=2r^nE(r)+(n-2)r^{n-1}uu'
\end{equation*}
satisfies $P'(r)=r^{n-1}h(u(r))$.
Also set
\begin{equation*}
 P_1:=P+r^n h_1(u),
 \qquad
 P_2:=P-\frac{r^n}{n}h(u).
\end{equation*}
Then
\begin{equation*}
 \omega:=-\frac{ru'}u,
 \qquad
 \omega'=\frac{P_1}{r^{n-1}u^2},
 \qquad
 \left(\frac{P}{r^n}\right)'=-\frac{nP_2}{r^{n+1}},
\end{equation*}
where the identities involving $\omega$ are understood at positive radii at which $u\ne0$.  Moreover,
\begin{equation*}
 P_2'(r)=-\frac{r^n u'}{n}h_2(u).
\end{equation*}

\begin{lemma}\label{lem:Ppositive}
Let $u$ be a bound state of \eqref{eq:shoot}.
Then $P(r)>0$ and $P_2(r)>0$ at every positive radius belonging to a finite nodal phase of $u$.
Consequently, $P/r^n$ is strictly decreasing there.  On every subinterval where $\abs{u}\geq1$, one also has $P_1>0$ and hence $\omega'>0$.
\end{lemma}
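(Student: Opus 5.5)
The plan is to prove positivity of $P_2$ and $P$ together by a continuity (first-failure) argument. The basic input is the sign structure of $h$ and $h_2$. A direct differentiation gives $h'(s)=2nf-(n-2)f-(n-2)sf'=h_2(s)$. By \eqref{eq:h-exp}--\eqref{eq:h2-exp}, $h_2$ has a unique positive zero $\gamma_2$, and $h$ has a unique positive zero $\gamma_h$. Since $h_2<0$ on $(0,\gamma_2)$ and $h_2>0$ beyond it, $h$ attains its minimum over $s>0$ at $\gamma_2$. Hence $h(\gamma_2)<0$ and $\gamma_2<\gamma_h$. By parity the same holds for negative $u$.

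\textbf{Positivity at the phase endpoints.} At a zero $z$ of $u$ we have $P_2(z)=z^nu'(z)^2>0$. At a critical point $c$, using $2F-h/n=\frac{n-2}{n}uf$, we get $P_2(c)=\frac{n-2}{n}c^n u(c)f(u(c))>0$, because $\abs{u(c)}>\beta>1$ by Lemma~\ref{lem:nodal-geometry}(iv). Near $r=0$ the same expansion gives $P_2\sim \frac{n-2}{n}r^n\alpha f(\alpha)>0$ and $P\sim 2r^nF(\alpha)>0$.

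\textbf{Monotonicity inside a phase.} By Lemma~\ref{lem:nodal-geometry}, each finite phase splits into branches on which $\abs{u}$ is strictly monotone, running between consecutive zeros and critical points. Since $P_2'=-\frac{r^n}{n}u'h_2(u)$, the sign of $P_2'$ is determined by whether $\abs{u}$ is increasing and whether $\abs{u}\gtrless\gamma_2$.
\begin{itemize}
\item On a branch where $\abs{u}$ increases (zero $\to$ critical point), $P_2$ first increases and then decreases. Its minimum over the branch is therefore at an endpoint, where it is positive.
\item On a branch where $\abs{u}$ decreases (critical point $\to$ zero, or $0\to$ first zero), $P_2$ first decreases and then increases. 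Its minimum is at the unique interior radius $\rho$ with $\abs{u(\rho)}=\gamma_2$.
\end{itemize}
At such a radius, $P_2(\rho)=P(\rho)-\rho^nh(\gamma_2)/n>P(\rho)$, since $h(\gamma_2)<0$. So positivity of $P_2$ on the whole phase reduces to positivity of $P$ at the radii $\rho$. Similarly, $P'=r^{n-1}h(u)$ shows that interior minima of $P$ occur only where $\abs{u}$ crosses $\gamma_h$ upward. At such a radius $h(u)=0$, so $P=P_2$ there.

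\textbf{First-failure argument.} Let $r_*$ be the first radius where $\min(P,P_2)\le0$.
\begin{itemize}
\item If $P_2$ fails first, then by the branch analysis $r_*$ must be a radius $\rho$ with $\abs{u}=\gamma_2$. There $P_2(r_*)>P(r_*)>0$, which is a contradiction.
\item If $P$ fails first at an interior minimum, then $\abs{u(r_*)}=\gamma_h$, so $P_2(r_*)=P(r_*)=0$. Then $P_2$ also has a minimum at $r_*$, which forces $\abs{u(r_*)}=\gamma_2\ne\gamma_h$, again a contradiction.
\item The remaining case is $P(r_*)=0$ with $P'(r_*)<0$, i.e.\ $P$ crosses zero while decreasing with $P_2>0$ on $(0,r_*]$. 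Then $P/r^n$ is strictly decreasing on $(0,r_*]$ by $(P/r^n)'=-nP_2/r^{n+1}$, and it reaches $0$ at $r_*$.
\end{itemize}

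\textbf{Main obstacle.} The last case is where I expect the real difficulty. My first attempt is to use the global behaviour at infinity. Along a bound state,
\[
\frac{P}{r^n}=2E+\frac{(n-2)uu'}{r}\to0,
\]
since $E\downarrow0$ and $u,u'\to0$ by the proof of Lemma~\ref{lem:nodal-geometry}(iii). One would then show that after $r_*$ the quantity $P/r^n$ cannot return to $0$ from below. That requires $P_2>0$ to persist, or at least $\int_{r_*}^\infty P_2r^{-n-1}\,\dd r>0$.

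To get this, I would exploit that $E>0$ for all $r$ (Lemma~\ref{lem:nodal-geometry}(iv)). At the points of the tail or the phase where $uu'=0$, this gives $P=2r^nE>0$. Combining this with the monotonicity of $P$ on each monotone branch of $\abs{u}$, I would try to exclude a negative excursion of $P$ that ends before the next zero or critical point. This part may need the tail estimate $r\abs{u'}\to0$ promised in Section~\ref{sec:tail}; that is why the statement is restricted to finite nodal phases.

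\textbf{Conclusion.} Once $P>0$ and $P_2>0$ are established, $(P/r^n)'=-nP_2/r^{n+1}<0$ gives the claimed strict monotonicity. Finally, on $\abs{u}\ge1$ we have
\[
h_1(u)=uf(u)-2F(u)=2\int_0^{\abs{u}}\bigl(\tfrac{1}{2}\bigl(f(\abs{u})-f(t)\bigr)\bigr)\dd t\cdot\frac{1}{1}+\bigl(\abs{u}f(\abs{u})-2\textstyle\int_0^{\abs{u}}f\bigr)\Big|_{\text{sign}}\,,
\]
which is best checked directly from \eqref{eq:h1-exp}. Indeed, on $s\ge1$,
\[
\frac{p-1}{p+1}s^{p+1}-\frac{q-1}{q+1}s^{q+1}\ge s^{q+1}\Bigl(\frac{p-1}{p+1}-\frac{q-1}{q+1}\Bigr)>0,
\]
because $s^{p+1}\ge s^{q+1}$ and $\frac{s-1}{s+1}$ is increasing in $s$. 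Thus $h_1(u)>0$, so $P_1=P+r^nh_1(u)>0$ and $\omega'=P_1/(r^{n-1}u^2)>0$.
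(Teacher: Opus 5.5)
There is a genuine gap. The case you flag as the main obstacle, $P(r_*)=0$ with $P'(r_*)<0$, is left open. It is in fact excluded by information you already have, and your branch analysis for $P_2$ contains a sign error that sends you toward the tail unnecessarily.

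On a branch where $\abs{u}$ decreases one has $uu'<0$, so $-u'h_2(u)=\abs{u'}h_2(\abs{u})$ and $P_2'=\frac{r^n}{n}\abs{u'}h_2(\abs{u})$. As $\abs{u}$ decreases, $h_2(\abs{u})$ can only pass from positive to negative. So $P_2$ \emph{increases and then decreases} there, exactly as on the increasing branches, not the reverse.

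The same holds for $P'=r^{n-1}h(\abs{u})$, as your own remark about interior minima of $P$ shows. Hence neither $P$ nor $P_2$ has an interior minimum on a prezero half-phase. Both are $\ge0$ at the initial endpoint: $0$ at the origin, and $2c^nF(u(c))>0$, $\frac{n-2}{n}c^nu(c)f(u(c))>0$ at a critical point. Both equal $z^nu'(z)^2>0$ at the zero. So both are positive in the interior.

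On a postzero half-phase, $uu'>0$ and $E>0$ (Lemma~\ref{lem:nodal-geometry}(iv)) give $P=2r^nE+(n-2)r^{n-1}uu'>0$ outright. There $P_2$ is increase-then-decrease with positive endpoint values.

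This is the whole proof. You need no first-failure argument, no comparison via $\gamma_2<\gamma_h$, and nothing from infinity or the tail estimate. In particular, your remaining case cannot occur. On a postzero branch $P>0$ outright, so it could only happen on a prezero branch. There $P'(r_*)<0$ forces $\abs{u(r_*)}<\gamma_h$, hence $\abs{u}<\gamma_h$ and $P'<0$ up to the zero $z$. That gives $P(z)<0$, a contradiction.

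Your asymptotics at the origin are also wrong. The term $(n-2)r^{n-1}uu'$ is of order $r^n$, so $P\sim\frac{r^n}{n}h(\alpha)$, whose sign is not known a priori. Its positivity, i.e.\ $\alpha>\gamma_h$, is itself a consequence of the argument above: otherwise $P$ would decrease from $P(0)=0$ yet end at $P(z_1)>0$. In $P_2$ the $r^n$-terms cancel, leaving $P_2\sim\frac{r^{n+2}}{n^2(n+2)}f(\alpha)h_2(\alpha)$. So the starting point of your first-failure argument was not justified.

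Your $P_2$-failure case is also flawed even under your own premise. The first zero of $P_2$ need not lie at the minimum point $\rho$, and $P(\rho)>0$ is not available beyond $r_*$.

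Finally, delete the garbled first display for $h_1$. The direct estimate from \eqref{eq:h1-exp} that follows it is correct and suffices for $P_1>0$ and $\omega'>0$.
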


\begin{proof}
At any simple zero $z$ of $u$,
\[
 P(z)=P_2(z)=z^n u'(z)^2>0.
\]
At any critical point \(c>0\) such that \(u\) has at least one zero in $(c,\infty)$,
Lemma~\ref{lem:nodal-geometry} gives $|u(c)|>\beta$, and therefore
\[
 P(c)=2c^nF(u(c))>0,
 \qquad
 P_2(c)=\frac{n-2}{n}c^n u(c)f(u(c))>0.
\]
At the origin, $P(0)=P_2(0)=0$.

For  $t>0$, both functions
$h(t)$ and $h_2(t)$ have exactly one positive zero, are negative
below that zero, and positive above it.  Consider first a prezero
half-phase, on which $|u|$ decreases strictly.  The identities
\[
 P'=r^{n-1}h(|u|),\qquad
 P_2'=\frac{r^n|u'|}{n}h_2(|u|)
\]
show that both $P$ and $P_2$ can first increase and then
decrease, but cannot have an interior minimum.
Since they are nonnegative at the preceding endpoint
and strictly positive at the subsequent zero,
both remain strictly positive throughout the interior.

On a postzero half-phase, $uu'>0$, and the energy is strictly
positive by Lemma~\ref{lem:nodal-geometry}.  Consequently,
\[
 P=2r^nE+(n-2)r^{n-1}uu'>0.
\]
Moreover, $|u|$ is now increasing and the parity of $h_2$ yields
\[
 P_2'=-\frac{r^n|u'|}{n}h_2(|u|).
\]
Its derivative has the increase--decrease sign pattern.  Since
$P_2>0$ at both endpoints, it is positive throughout.  This proves
$P,P_2>0$ on every finite phase.  The identity for $(P/r^n)'$ now
implies its strict decrease.  Finally $h_1(u)>0$ for $|u|\ge1$,
so $P_1=P+r^nh_1(u)>0$ and $\omega'>0$ wherever $|u|\ge 1$.
\end{proof}

The next lemma replaces the use of exponential decay in Tang's proof when positivity of $P$ is needed on the semi-tail.

\begin{lemma}\label{lem:Ptail}
Let $u$ be a bound state of \eqref{eq:shoot} and let $c$ be its last critical point.
If $c>0$, then $P(r)>0$ for $r\ge c$.  If $c=0$, then
$P(r)>0$ for $r>0$.
\end{lemma}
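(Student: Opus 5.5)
We reduce to the single-sign tail and use the same ``no interior minimum'' argument as in Lemma~\ref{lem:Ppositive}. The boundary value at $r=\infty$, which exponential decay supplied in Tang's proof, is replaced by a flux bound.

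\emph{Normalization and sign pattern.} By Lemma~\ref{lem:nodal-geometry}(iii), $u$ has fixed sign on $(c,\infty)$, is strictly monotone there, and tends to $0$. Since $f$ is odd and $F,h$ are even, replacing $u$ by $-u$ leaves $P$ unchanged. So we may assume $u>0$ and $u'<0$ on $(c,\infty)$. Let $\gamma>0$ be the unique positive zero of $h$, with $h<0$ on $(0,\gamma)$ and $h>0$ on $(\gamma,\infty)$. As $u$ decreases from $u(c)$ to $0$, the identity $P'=r^{n-1}h(u)$ shows that on $(c,\infty)$ the function $P$ is either strictly decreasing, or first strictly increasing and then strictly decreasing. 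In either case $P$ has no interior minimum on $(c,\infty)$, so $\inf_{(c,\infty)}P$ is controlled by $P(c)$ and $\liminf_{r\to\infty}P(r)$.

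\emph{Left endpoint.} If $c>0$, Lemma~\ref{lem:nodal-geometry}(iv) (or (iii)) gives $u(c)>\beta$, hence $P(c)=2c^nF(u(c))>0$. If $c=0$, then $P(0)=0$. We do not need to know whether $\alpha>\gamma$, since the tail estimate below rules out a decreasing start from $0$.

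\emph{Tail (the main step).} We claim $\liminf_{r\to\infty}P(r)\ge0$. Write $P=2r^nE+(n-2)r^{n-1}uu'$. By Lemma~\ref{lem:nodal-geometry}(iv), $E>0$, so it suffices to show $r^{n-1}u\,|u'|\to0$. For large $r$ we have $0<u<1$, so
\[
(r^{n-1}u')'=-r^{n-1}f(u)=r^{n-1}u^q(1-u^d)>0 .
\]
Hence $r^{n-1}u'$ is negative and increasing for $r\ge r_0$. It follows that $r^{n-1}|u'(r)|\le M:=r_0^{n-1}|u'(r_0)|$ for $r\ge r_0$, and therefore $r^{n-1}u|u'|\le Mu(r)\to0$. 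This uses only the radial flux, not any decay rate, which is why it survives in the zero-mass regime.

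\emph{Conclusion.} The final stage of $P$ on $(c,\infty)$ is strictly decreasing. Suppose $P(r_0)\le0$ for some $r_0$ in that stage. Then for any $r_1>r_0$ we get $\lim_{r\to\infty}P\le P(r_1)<P(r_0)\le0$, contradicting the tail claim. So $P>0$ throughout the decreasing stage.

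In the increasing stage (if there is one), $P>P(c)\ge0$. This applies both when $c>0$ and when $c=0$, where the increasing stage starts from $P(0)=0$. Together with $P(c)>0$ when $c>0$, this gives $P>0$ on $[c,\infty)$ for $c>0$, and on $(0,\infty)$ for $c=0$.
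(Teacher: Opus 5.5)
Your proposal is correct and follows essentially the same route as the paper: the radial flux bound $r^{n-1}\abs{u'}\le \mathrm{const}$ on the small-amplitude tail, together with $E>0$, gives $\liminf_{r\to\infty}P(r)\ge0$; the strict eventual decrease of $P$ (from $h(u)<0$ for small $u$) then rules out nonpositive values in the final decreasing stage; and the single positive-to-negative sign change of $h(u(r))$ along the monotone tail carries positivity back to $c$. Your explicit treatment of $c=0$ only spells out what the paper covers by citing the increase--decrease argument of Lemma~\ref{lem:Ppositive}: there the increasing stage starts from $P(0)=0$, and a purely decreasing profile is excluded by the tail bound.
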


\begin{proof}
After reflection if necessary, assume $u>0$, $u'<0$, and $u\to0$ on
$(c,\infty)$.  Then $E(r)\downarrow0$ and $u'(r)\to0$.
For all sufficiently large $r$, one has $0<u(r)<1$ and $f(u)<0$.  Put
\[
 y(r):=-r^{n-1}u'(r)>0.
\]
Since $y'(r)=r^{n-1}f(u(r))<0$ on the small-amplitude tail, $y$ is bounded there.  From
\[
 P(r)=2r^nE(r)-(n-2)u(r)y(r)
\]
and $E(r)>0$, we obtain $P(r)>-(n-2)u(r)y(r)$,
so $\liminf_{r\to\infty}P(r)\ge0$.  On the other hand, $h(u)<0$ for all
sufficiently small $u$.  Hence $P'<0$ eventually.  The function $P$ cannot
become nonpositive on that final interval.  Indeed, if $P(r_0)\le0$ at a
sufficiently large point, then strict decrease gives
$P(r)<P(r_1)< P(r_0)\leq 0$ for every $r>r_1>r_0$.  This contradicts the preceding
lower limit.  Thus $P(r)>0$ for all sufficiently large $r$.

Along the whole monotone tail, $h(u(r))$ changes sign at most once, from
positive to negative.  
Hence the same increase--decrease argument as in Lemma~\ref{lem:Ppositive} shows that \(P\) remains positive on the whole monotone tail. This proves the claim.
\end{proof}

\subsection{Zero-mass sign structure}
For $s>0$, set
\begin{equation*}
 \mathcal A(s):=sf'(s)-f(s)
 =(p-1)s^p-(q-1)s^q.
\end{equation*}
Thus $\mathcal A$ changes sign at
\[
 \delta_A:=\left(\frac{q-1}{p-1}\right)^{\frac{1}{d}}<1.
\]
Likewise, $h_1$ changes sign at
\[
 \delta_h:=
 \left(\frac{(q-1)(p+1)}{(p-1)(q+1)}\right)^{\frac{1}{d}},
 \qquad 0<\delta_A<\delta_h<1.
\]
In particular, Tang's global conditions $-f'(0)>0$ and $h_1\ge0$ fail, because $f'(0)=0$,
but on the entire high-amplitude region $\abs{u}\geq1$, one has
\begin{equation}\label{eq:high-signs}
 \sgn \mathcal A(u)=\sgn u,
 \qquad h_1(u)>0,
 \qquad f'(u)>0,
 \qquad \sgn f''(u)=\sgn u.
\end{equation}

\subsection{High-amplitude comparison ratios}
For $s\ge1$, define
\begin{equation*}
 g_1(s):=\frac{2f(s)}{sf'(s)-f(s)}
 =\frac{2(s^d-1)}{(p-1)s^d-(q-1)}.
\end{equation*}
Then
\begin{equation*}
 g_1(1)=0,
 \qquad g_1'(s)>0,
 \qquad 0<g_1(s)<\frac{2}{p-1},\qquad s>1.
\end{equation*}
Since the zero $\delta_h$ of $h_1$ lies strictly below $1$, the quotient
\begin{equation*}
 g_2(s):=\frac{2F(s)}{h_1(s)}
\end{equation*}
is smooth for every $s\geq1$.  Writing $x=s^d$ gives the explicit formula
\begin{equation*}
 g_2(s)=
 \frac{2\bigl((q+1)x-(p+1)\bigr)}
 {(p-1)(q+1)x-(q-1)(p+1)}.
\end{equation*}
Consequently
\begin{equation*}
\begin{aligned}
 g_2'(s)>0 \quad \text{if }s>1,\qquad
  g_2(s)<0 \quad \text{if }1\leq s<\beta,\qquad g_2(\beta)=0,
 \qquad 0<g_2(s)<\frac{2}{p-1} \quad \text{if }s>\beta.
\end{aligned}
\end{equation*}
We extend $g_1, g_2$ evenly to $\abs{s}\geq1$.

A useful feature of the double-power model is the strict comparison
\begin{equation*}
 g_1(s)>g_2(s),\qquad s\geq 1.
\end{equation*}
Indeed,
\[
 g_1(s)-g_2(s)
 =\frac{2s^{p+q+1}(p-q)^2}
 {(p+1)(q+1)\mathcal A(s) h_1(s)}>0,
 \qquad s\geq1.
\]
The monotonicity behind the inflection argument is equally explicit.  Put
\begin{equation}\label{eq:varpi}
 \varpi(s):=\frac{f(s)}{s f'(s)}
 =\frac{s^d-1}{p s^d-q},\qquad s\geq1.
\end{equation}
Then $\varpi'(s)>0$ for $s>1$.

\begin{lemma}\label{lem:inflection}
Let $u$ be a monotone prezero half-phase, normalized so that $u>1$ and $u'<0$ between its initial critical point and the point where $u=1$. Assume $P_1>0$ on this interval. Then $u''$ vanishes exactly once there.  Before that point $u''<0$, and afterward $u''>0$.  In particular, $\abs{u'}$ is strictly decreasing after the inflection point.
\end{lemma}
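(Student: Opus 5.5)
The plan is a sign-at-the-endpoints argument, followed by a strict monotonicity argument along the interval that rules out more than one zero of $u''$. Let $c$ be the initial critical point and $r_1$ the point with $u(r_1)=1$, so that $u>1$ and $u'<0$ on $(c,r_1)$.

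\textbf{Endpoint signs.} By Lemma~\ref{lem:nodal-geometry}, $u(c)>\beta>1$, so $u''(c)=-f(u(c))<0$. (If $c=0$, use $u''(0)=-f(\alpha)/n<0$ instead.) At $r_1$ we have $f(1)=0$, so $u''(r_1)=-\frac{n-1}{r_1}u'(r_1)>0$. Hence $u''$ has at least one zero in $(c,r_1)$.

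\textbf{Third derivative at an inflection point.} Differentiating \eqref{eq:shoot} gives
\[
 u'''=-\frac{n-1}{r}u''+u'\Bigl(\frac{n-1}{r^2}-f'(u)\Bigr).
\]
At a zero $r_0$ of $u''$ the equation yields $\frac{n-1}{r_0}u'=-f(u)$. Substituting this and writing $ru'=-\omega u$ gives
\[
 u'''(r_0)=\frac{1}{r_0}\bigl(-f(u)+\omega u f'(u)\bigr)
 =\frac{u f'(u)}{r_0}\bigl(\omega-\varpi(u)\bigr)\Big|_{r=r_0},
\]
with $\varpi$ as in \eqref{eq:varpi}. Since $u>1$, we have $uf'(u)>0$ by \eqref{eq:high-signs}. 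So the sign of $u'''$ at any inflection point equals the sign of
\[
 G(r):=\omega(r)-\varpi(u(r)).
\]

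\textbf{Monotonicity of $G$.} Because $P_1>0$, we have $\omega'=P_1/(r^{n-1}u^2)>0$. Because $\varpi'>0$ on $(1,\infty)$ and $u'<0$, the term $r\mapsto\varpi(u(r))$ is strictly decreasing. Hence $G$ is strictly increasing on $(c,r_1)$.

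\textbf{Counting zeros.} Let $r_0$ be the first zero of $u''$. Since $u''<0$ on $(c,r_0)$, we get $u'''(r_0)\ge0$, so $G(r_0)\ge0$. By strict monotonicity, $G>0$ on $(r_0,r_1)$. Every zero of $u''$ in $(r_0,r_1)$ therefore has $u'''>0$, i.e. it is a strict upward crossing. Two consecutive strict upward crossings are impossible, so $u''$ has at most one zero in $(r_0,r_1)$.

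\textbf{Main obstacle: the degenerate first zero.} The delicate case is $G(r_0)=0$, where $r_0$ could be a touching zero with $u''<0$ on both sides, followed by a second, genuine crossing. I would exclude it by working with $(r^{n-1}u'')'=r^{n-1}u'\bigl(\frac{n-1}{r^2}-f'(u)\bigr)$ and the bracket
\[
 \Theta(r):=\frac{n-1}{r^2}-f'(u(r)).
\]
\begin{itemize}
\item Since $u'<0$, the sign of $u'''$ at a zero of $u''$ is the sign of $-\Theta$.
\item $\Theta'=-2(n-1)/r^3-f''(u)u'$ is controlled, because $f''>0$ for $u\ge1$ by \eqref{eq:high-signs}, and $u'<0$.
\item If $u''<0$ on $(r_0,r_2)$ with $u''(r_2)=0$, integrate $(r^{n-1}u'')'$ over $[r_0,r_2]$. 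This gives $\int_{r_0}^{r_2} r^{n-1}u'\Theta\,dr=0$, forcing $\Theta$ to change sign on $(r_0,r_2)$.
\item Compare this with $\Theta(r_0)=0$ (from $G(r_0)=0$) and $\Theta(r_2)<0$ (from $G(r_2)>0$).
\end{itemize}
If $\Theta$ turns out not to be monotone, the fallback is to compute $u^{(4)}(r_0)$ and show it is strictly positive when $u''(r_0)=u'''(r_0)=0$. This follows from $G'(r_0)>0$, since $u'''=\frac{uf'}{r}G$ holds modulo a multiple of $u''$, so differentiating at $r_0$ gives $u^{(4)}(r_0)=\frac{uf'}{r_0}G'(r_0)>0$. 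Then $u''>0$ just after $r_0$, and the touching scenario is impossible.

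\textbf{Conclusion.} Once the zero $r_0$ is shown to be unique, we have $u''<0$ before it and $u''>0$ after it. Since $u'<0$, the latter means $\abs{u'}=-u'$ is strictly decreasing after the inflection point.
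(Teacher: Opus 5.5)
Your proposal is correct and, through your fallback, is essentially the paper's own proof. The shared steps are the endpoint signs of $u''$, the exclusion of $u'''(r_0)=0$ at the first zero via $u^{(4)}(r_0)>0$ and Taylor expansion, and then the comparison of $\omega$ (increasing) with $\varpi(u(r))$ (decreasing), which makes every later zero of $u''$ a strict upward crossing. Your identity $u'''=-\frac{n}{r}u''+\frac{uf'(u)}{r}\bigl(\omega-\varpi(u)\bigr)$ gives $u^{(4)}(r_0)=\frac{u f'(u)}{r_0}G'(r_0)$, which coincides with the paper's $-\frac{u'f'(u)}{r_0}\bigl(2-\frac{f f''}{f'^2}\bigr)$.

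The $\Theta$-integration route you list first does not close as written, because $\Theta'$ has no definite sign. Lead with the $u^{(4)}$ computation instead. It in fact rules out the degenerate case outright, since Taylor's formula also gives $u''>0$ just before $r_0$, contradicting $u''<0$ on $(c,r_0)$.
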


\begin{proof}
At a positive-radius initial critical point, $u''=-f(u)<0$.  If the initial point is the origin, radial regularity gives $u''(0)=-f(\alpha)/n<0$.  At the level $u=1$, the equation gives $u''=-(n-1)u'/r>0$.  Hence a zero of $u''$ exists.  Let $r_0$ be the first one.  Then $u'''(r_0)\ge0$.  Differentiating the equation and using $u''(r_0)=0$ gives
\[
 u'''(r_0)=u'(r_0)
 \left(\frac{n-1}{r_0^2}-f'(u(r_0))\right),
\]
so, because $u'<0$,
\[
 \frac{n-1}{r_0^2}\le f'(u(r_0)).
\]
The inequality $u'''(r_0)\ge0$ is in fact strict. To see this, suppose
that $u'''(r_0)=0$ and put $u_0=u(r_0)>1$. Then
\[
 f'(u_0)=\frac{n-1}{r_0^2},\qquad
 f(u_0)=-\frac{n-1}{r_0}u'(r_0).
\]
Since $f$ is smooth near $u_0$, we may differentiate again. Since
\[
s\varpi'(s)
=1-\varpi(s)-\frac{f(s)f''(s)}{f'(s)^2},
\]
the strict positivity of $\varpi'(u_0)$ yields
\[
\frac{f(u_0)f''(u_0)}{f'(u_0)^2}
<1-\varpi(u_0)<1.
\]
Differentiating the equation twice and using
$u''(r_0)=u'''(r_0)=0$, we obtain
\[
u^{(4)}(r_0)
=-\frac{2(n-1)}{r_0^3}u'(r_0)
-f''(u_0)u'(r_0)^2=-\frac{u'(r_0)f'(u_0)}{r_0}
\left(
2-\frac{f(u_0)f''(u_0)}{f'(u_0)^2}
\right)>0.
\]
Taylor's expansion yields
\[
u''(r)=\frac12u^{(4)}(r_0)(r-r_0)^2
+o((r-r_0)^2)>0
\]
for $r<r_0$ sufficiently close to $r_0$. This contradicts the negativity of $u''$ to the left of its first zero. Hence $u'''(r_0)>0$,
and $u''$ crosses from negative to positive at $r_0$.

The equation at $r_0$ also yields
\[
 \omega(r_0)=-\frac{r_0u'(r_0)}{u(r_0)}
 =\frac{r_0^2 f(u(r_0))}{(n-1)u(r_0)}
 \geq \varpi(u(r_0)).
\]
Now $P_1>0$ implies that $\omega$ increases with $r$, while \eqref{eq:varpi} and the decrease of $u$ imply that $\varpi(u(r))$ decreases.  Thus
\[
 \omega(r)>\omega(r_0) \geq \varpi(u(r_0)) >\varpi(u(r)),\qquad r>r_0.
\]
At any later zero of $u''$, we would have
\[
 u'''=u'f'(u)\left(\frac{\varpi(u)}{\omega}-1\right)>0.
\]
If this were the first later zero, then $u''>0$ immediately to its left and hence $u'''\le0$ there.  This contradicts the displayed strict positivity.  Thus a second zero is impossible.  The final assertion follows from $u'<0$ and $u''>0$ after $r_0$.
\end{proof}

\subsection{Variation and connection identities}
Since $q>1$, the nonlinearity $f$ belongs to $C^1(\R)$.  The shooting
family depends $C^1$ on its initial height on every compact radial interval.
The formal singularity at $r=0$ causes no difficulty: near the origin the
regular radial solution is equivalently written as
\[
 u(r,\alpha)=\alpha-
 \int_0^r t^{1-n}\int_0^t s^{n-1}f(u(s,\alpha))\dd s\dd t,
\]
and differentiation of this Volterra equation with respect to $\alpha$
yields the usual parameter dependence; away from the origin it is continued
by standard ODE theory.  Thus
\[
 v(r,\alpha):=\partial_\alpha u(r,\alpha)
\]
is well defined and satisfies
\begin{equation}\label{eq:var}
 v''+\frac{n-1}{r}v'+f'(u)v=0,
 \qquad v(0)=1,
 \qquad v'(0)=0.
\end{equation}
This is the same standard shooting variation used in \cite[Section~3]{Tang2026}.
If $v(r_0)=v'(r_0)=0$ at some $r_0>0$, uniqueness for the regular linear
ODE on $(0,\infty)$ would give $v\equiv0$, contradicting $v(0)=1$.
Hence every positive zero of $v$ is simple.

Following the comparison construction in
\cite[Sections~4--5]{Tang2026}, define
\begin{align}
 M(r)&:=r^{n-1}(u'v-uv'),\label{eq:M}\\
 Q(r)&:=r^n\bigl(u'v'+f(u)v\bigr)+(n-2)r^{n-1}u'v,\label{eq:Q}\\
 Q_j(r)&:=Q(r)+j r^{n-1}u'v.\notag
\end{align}
The identities
\begin{align}
 M'&=r^{n-1}\bigl(uf'(u)-f(u)\bigr)v,\label{eq:Mprime}\\
 Q'&=2r^{n-1}f(u)v,\label{eq:Qprime}\\
 Q_1'&=r^{n-1}\bigl(u'v'+f(u)v\bigr),\notag\\
 Q_2'&=2r^{n-1}u'v',\notag\\
 Q_j'&=r^{n-1}\bigl(ju'v'-(j-2)f(u)v\bigr),\notag\\
 \left(\frac{v}{u'}\right)'&=\frac{Q_1}{r^n u'^2},\label{eq:vuratio}\\
 \left(\frac{Q}{r^{n-1}u'}\right)'&=
 f(u)\frac{Q_2}{r^{n-1}u'^2}\label{eq:Qratio}
\end{align}
are algebraic consequences of \eqref{eq:shoot} and \eqref{eq:var} and do not depend on the special form of $f$.  The last two identities are used only where $u'\ne0$.

The Wronskian of $u'$ and $v'$ is
\begin{equation}\label{eq:rho}
 \rho(r):=r^{n-1}\bigl(u''v'-u'v''\bigr)
 =r^{n-1}\bigl(f'(u)u'v-f(u)v'\bigr),
\end{equation}
with
\begin{equation}\label{eq:rhoprime}
 \rho'(r)=r^{n-1}f''(u)u'^2v.
\end{equation}
Although $f$ need not be $C^2$ at the origin when
$1<q<2$, the identity \eqref{eq:rhoprime} remains
valid wherever $u\ne0$. Moreover, $\rho$ is
continuous across nodal zeros by \eqref{eq:rho},
since $f\in C^1(\mathbb R)$ and $f(0)=f'(0)=0$.

Define
\begin{equation*}
 B_0(r):=Q(r)-2F(u(r))\frac{r^{n-1}v(r)}{u'(r)}\quad \text{if }u'\ne0.
\end{equation*}
Set
\begin{equation*}
 \Xi(r):=\frac{r^{n-1}v(r)}{u'(r)}.
\end{equation*}
Using the equation for $u$, we obtain
\[
 \Xi'=\frac{Q_n}{r u'^2}.
\]
Since $Q'=2r^{n-1}f(u)v$ and $u'\Xi=r^{n-1}v$, it follows that
\begin{equation}\label{eq:B0prime}
 B_0'
 =Q'-2f(u)u'\Xi-2F(u)\Xi'=-2F(u)\frac{Q_n}{r u'^2}.
\end{equation}

For a constant $a$, put
\begin{equation*}
 F_a(s):=F(s)-\frac a2 h_1(s),
 \qquad
 B_a(r):=Q(r)-aM(r)-2F_a(u(r))\frac{r^{n-1}v(r)}{u'(r)}.
\end{equation*}
Here and in the derivative identity below, $B_a$ is used only where
$u'\ne0$.  For the verification, write
\[
 \theta(r):=r^{n-1}\frac{h_1(u(r))v(r)}{u'(r)}=h_1(u)\Xi.
\]
Since $h_1'(s)=sf'(s)-f(s)$, identities \eqref{eq:Mprime} and the formula
for $\Xi'$ give
\begin{align*}
 \theta'
 =h_1'(u)u'\Xi+h_1(u)\Xi'
 =r^{n-1}\bigl(uf'(u)-f(u)\bigr)v
   +h_1(u)\frac{Q_n}{r u'^2}
 =M'+h_1(u)\frac{Q_n}{r u'^2}.
\end{align*}
Consequently
\begin{equation*}
 (M-\theta)'=-h_1(u)\frac{Q_n}{r u'^2}.
\end{equation*}
Moreover, $B_a=B_0-a(M-\theta)$.  Hence \eqref{eq:B0prime} yields
\begin{equation}\label{eq:Baprime}
 \begin{aligned}
 B_a'=-2F(u)\frac{Q_n}{r u'^2}
       +a h_1(u)\frac{Q_n}{r u'^2}
     =-2F_a(u)\frac{Q_n}{r u'^2}.
 \end{aligned}
\end{equation}
If $u(r)\geq1$, set
\[
 T_1(r):=Q(r)-g_1(u(r))M(r).
\]

The following identity replaces the model-specific connection formula
in \cite[Section~5]{Tang2026}.

\begin{lemma}
If $u'\ne0$, let $\theta$ be the quantity defined above.
Then, wherever $u\ne0$ and $u'\ne0$,
\begin{equation}\label{eq:connection}
 Q-P\frac vu=\omega(M-\theta).
\end{equation}
Moreover, with
\[
 T_2:=Q-g_2(u)M
\]
on an interval $\abs{u}>1$,
\begin{align}
 T_2'
 &=r^{n-1}g_2'(u)h_1(u)v-g_2'(u)u'M,\label{eq:T2prime-regular}\\
 &= -g_2'(u)u'(M-\theta)\quad \text{if }u'\neq0.\label{eq:T2prime}
\end{align}
Finally, wherever $u'\ne0$,
\begin{equation}\label{eq:T2B0}
 T_2=B_0+\frac{g_2(u)}{g_2'(u)u'}T_2'.
\end{equation}
\end{lemma}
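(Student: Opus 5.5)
All three assertions are algebraic identities. I would verify them by direct computation from the definitions \eqref{eq:M}, \eqref{eq:Q}, the formula $\omega=-ru'/u$, $\theta=h_1(u)\Xi$ with $\Xi=r^{n-1}v/u'$, and the derivative identities \eqref{eq:Mprime}, \eqref{eq:Qprime} and \eqref{eq:B0prime} already established.

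\emph{Connection identity.} Expand the right-hand side. First,
\[
\omega M=-\frac{ru'}{u}\,r^{n-1}(u'v-uv')=-\frac{r^nu'^2v}{u}+r^nu'v'.
\]
Second,
\[
\omega\theta=-\frac{ru'}{u}\,h_1(u)\frac{r^{n-1}v}{u'}=-\frac{r^n h_1(u)v}{u}=-\frac{r^n(uf(u)-2F(u))v}{u}.
\]
Hence
\[
\omega(M-\theta)=r^nu'v'+r^nf(u)v-\frac{r^n(u'^2+2F(u))v}{u}.
\]
On the left-hand side,
\[
P\frac vu=\frac{r^n(u'^2+2F(u))v}{u}+(n-2)r^{n-1}u'v.
\]
Subtracting this from $Q=r^n(u'v'+f(u)v)+(n-2)r^{n-1}u'v$ gives exactly the expression above, so \eqref{eq:connection} holds.

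\emph{Derivative of $T_2$.} Differentiate $T_2=Q-g_2(u)M$:
\[
T_2'=Q'-g_2'(u)u'M-g_2(u)M'=r^{n-1}v\bigl(2f(u)-g_2(u)\mathcal A(u)\bigr)-g_2'(u)u'M.
\]
Since $g_2=2F/h_1$, we have $g_2'h_1=2f-g_2h_1'$, and $h_1'=\mathcal A$. Therefore the first term equals $r^{n-1}g_2'(u)h_1(u)v$, which is \eqref{eq:T2prime-regular}. Where $u'\ne0$, write $r^{n-1}h_1(u)v=u'\theta$. Then
\[
T_2'=g_2'(u)u'\theta-g_2'(u)u'M=-g_2'(u)u'(M-\theta),
\]
which is \eqref{eq:T2prime}. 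This step needs $h_1\ne0$ on $|u|>1$, which holds by \eqref{eq:high-signs}.

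\emph{The $B_0$ relation.} Apply \eqref{eq:T2prime} to get
\[
\frac{g_2(u)}{g_2'(u)u'}\,T_2'=-g_2(u)(M-\theta).
\]
Hence the right-hand side of \eqref{eq:T2B0} is
\[
B_0-g_2M+g_2\theta.
\]
Now $g_2\theta=2F(u)\Xi$, since $g_2h_1=2F$. By definition $B_0=Q-2F(u)\Xi$, so the sum equals $Q-g_2M=T_2$.

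This requires $g_2'(u)\ne0$. That holds for $|u|>1$ because $g_2'>0$ there, with the even extension taking care of the sign. So \eqref{eq:T2B0} holds on the stated interval.

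The only delicate point is this bookkeeping of domains: $u\ne0$ is needed for $\omega$, $u'\ne0$ is needed for $\Xi$ and $\theta$, and $|u|>1$ is needed for $g_2$ and $g_2'\ne0$. Everything else is routine algebra, and the quotient rule for $g_2$ is the key simplification.
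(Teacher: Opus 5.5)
Your proposal is correct and follows the paper's proof essentially line by line. For \eqref{eq:connection} both arguments are a direct expansion; the paper expands the left side and factors out $\omega$, while you expand both sides and compare, which is the same computation. For $T_2'$ you use the quotient-rule identity $g_2'h_1=2f-g_2h_1'$ with \eqref{eq:Mprime}--\eqref{eq:Qprime}, and for \eqref{eq:T2B0} you use $B_0=Q-g_2\theta$ together with $M-\theta=-T_2'/(g_2'(u)u')$, exactly as the paper does; your bookkeeping of where $u\ne0$, $u'\ne0$, $h_1\ne0$ and $g_2'\ne0$ are needed is accurate.
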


\begin{proof}
Expanding the left-hand side gives
\begin{align*}
 Q-P\frac vu
 &=r^n\left(u'v'-\frac{u'^2v}{u}
       +f(u)v-2F(u)\frac vu\right)\\
 &=r^n u'\left(v'-\frac{u'v}{u}\right)
   +r^n\frac vu\bigl(uf(u)-2F(u)\bigr)\\
 &=\left(-\frac{ru'}u\right)
   \left[r^{n-1}(u'v-uv')
    -r^{n-1}\frac{h_1(u)v}{u'}\right]\\
 &=\omega(M-\theta).
\end{align*}
This proves \eqref{eq:connection}.  Since
$h_1'=sf'(s)-f(s)$ and $g_2=2F/h_1$,
\[
 g_2'(s)h_1(s)=2f(s)-g_2(s)\bigl(sf'(s)-f(s)\bigr).
\]
Using \eqref{eq:Mprime}--\eqref{eq:Qprime} therefore yields
\eqref{eq:T2prime-regular}.  Substitution of the definition of $\theta$
gives \eqref{eq:T2prime} away from critical points.  Moreover,
\[
 B_0=Q-g_2(u)\theta,\qquad
 T_2-B_0=-g_2(u)(M-\theta).
\]
The equality
\[
 M-\theta=-\frac{T_2'}{g_2'(u)u'}
\]
then gives \eqref{eq:T2B0} wherever $u'\ne0$.
\end{proof}

For the double power, the bridge factorization has an explicit form.

\begin{lemma}\label{lem:factor}
Fix $A\ge\beta$ and put $a=g_2(A)$.  Then $F_a(\pm A)=0$ and
\begin{equation*}
 -2F_a(s)=C_A\abs{s}^{q+1}
 \left[1-\left(\frac{\abs{s}}{A}\right)^{p-q}\right],
 \qquad \abs{s}\le A,
\end{equation*}
where
\[
 C_A:=\frac{2-(q-1)g_2(A)}{q+1}>0.
\]
\end{lemma}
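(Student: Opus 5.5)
The plan is a direct computation. It uses only the explicit formulas for $F$ and $h_1$ on $s>0$, evenness, and the range of $g_2$ recorded in the subsection on high-amplitude comparison ratios.

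\emph{Vanishing at $\pm A$.} Since $A\ge\beta>1>\delta_h$, we have $h_1(A)>0$, so $a=g_2(A)=2F(A)/h_1(A)$ is well defined. Then
\[
F_a(A)=F(A)-\tfrac12 g_2(A)h_1(A)=F(A)-F(A)=0 .
\]
Because $F$ and $h_1$ are even, $F_a$ is even, so also $F_a(-A)=0$.

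\emph{Factorization.} For $s>0$, insert the expansions of $F$ and of \eqref{eq:h1-exp} into $-2F_a=-2F+a h_1$ and collect powers:
\[
-2F_a(s)=\frac{2-(q-1)a}{q+1}\,s^{q+1}-\frac{2-(p-1)a}{p+1}\,s^{p+1}
= C_A\,s^{q+1}\bigl[1-K s^{d}\bigr],
\]
where
\[
K:=\frac{(q+1)\bigl(2-(p-1)a\bigr)}{(p+1)\bigl(2-(q-1)a\bigr)} .
\]
This step requires $C_A\neq0$, which follows from the next paragraph. To identify $K$, I would not compute it directly. Instead, evaluate the factorization at $s=A$: since $F_a(A)=0$ and $C_A A^{q+1}\ne0$, we get $1-KA^{d}=0$, that is, $K=A^{-d}$. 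This gives the claimed formula for $s>0$. Evenness of both sides extends it to $s<0$ (with $\abs{s}$), and continuity covers $s=0$.

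\emph{Positivity of $C_A$.} Since $A\ge\beta$, the stated properties of $g_2$ give $0\le a<2/(p-1)$ (with $a=0$ exactly when $A=\beta$). Since $1<q<p$, we get $(q-1)a\le(p-1)a<2$, hence $2-(q-1)a>0$ and $C_A>0$.

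The only point needing care is to avoid expanding $K$ in closed form. Obtaining $K=A^{-d}$ from $F_a(A)=0$ makes the result immediate. The identity actually holds for all $s$, so the restriction $\abs{s}\le A$ in the statement only records where the bracket is nonnegative.
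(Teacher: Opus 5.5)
Your proposal is correct and follows essentially the same route as the paper. You check $F_a(\pm A)=0$ from $a=g_2(A)=2F(A)/h_1(A)$ and evenness, and you read off $C_A$ as the coefficient of $\abs{s}^{q+1}$. Using $F_a(A)=0$ to force the second coefficient to be $C_A A^{-(p-q)}$ is the step the paper compresses into ``the vanishing at $A$ forces the stated factorization.'' Positivity of $C_A$ then follows from $0\le g_2(A)<2/(p-1)<2/(q-1)$, exactly as in the paper.
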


\begin{proof}
By the definition, $F_a(\pm A)=F_a(A)=F(A)-g_2(A)h_1(A)/2=0$.  Since $F_a$ is a linear combination of the two powers $\abs{s}^{p+1}$ and $\abs{s}^{q+1}$, the vanishing at $A$ forces the stated factorization.  If $A=\beta$, then $g_2(A)=0$; if $A>\beta$, then $0<g_2(A)<2/(p-1)<2/(q-1)$.  Hence $C_A>0$ in either case.
\end{proof}

\section{First-phase variation}\label{sec:first}

\begin{theorem}\label{thm:firstphase}

Let $u=u(\cdot,\alpha)$ be a bound state of \eqref{eq:shoot}.
Assume that $u$ has a first zero $z_1$ and a
first postzero critical point $c_1>z_1$.
Let $r_1\in(0,z_1)$ be the point at which $u(r_1)=1$.  Then the variation
$v=\partial_\alpha u$ has exactly one zero $\tau_1\in(0,r_1)$
on $[0,c_1]$.  More precisely,
\[
 v>0\quad\text{on }[0,\tau_1),
 \qquad
 v<0\quad\text{on }(\tau_1,c_1].
\]
In addition,
\begin{equation*}
 Q(c_1),\ M(c_1)>0,
\end{equation*}
and
\begin{equation}\label{eq:first-Q12}
Q>0\quad\text{on }(0,r_1]\cup [\bar b_1,c_1],\qquad M,T_1>0\quad\text{on } (0,r_1],\qquad Q_1, Q_2>0
\quad \text{on } (0,c_1].
\end{equation}
Moreover
\begin{equation*}
  Q(\bar b_1)>Q(b_1)>0.
\end{equation*}
\end{theorem}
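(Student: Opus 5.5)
The plan follows Tang's first-phase argument \cite[Sections~4--5]{Tang2026} on the high-amplitude piece $(0,r_1]$. On the low-amplitude strip around $z_1$, where the global sign conditions fail, I would use the bridge functional $B_0$ instead. I read $b_1<\bar b_1$ in $(z_1,c_1)$ as the points where $u=-1$ and $u=-\beta$; by Lemma~\ref{lem:nodal-geometry}(iv), $u(c_1)<-\beta$.

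\emph{Step 1 (behaviour near the origin and on $(0,r_1]$).} From $v(0)=1$, $v'(0)=0$ and $u''(0)=-f(\alpha)/n$, the expansions give $M,Q,Q_1,Q_2>0$ for small $r>0$. On $(0,r_1]$ we have $u\ge1$, so \eqref{eq:high-signs} holds. While $v>0$, \eqref{eq:Mprime} and \eqref{eq:Qprime} give $M'>0$ and $Q'>0$. I would then show $T_1>0$ on $(0,r_1]$ by computing $T_1'$ and checking that it has the right sign wherever $T_1$ vanishes, using $g_1'>0$ and the comparison $g_1>g_2$. The positivity of $Q_1$ makes $v/u'$ strictly increasing by \eqref{eq:vuratio}, so $v$ has at most one zero before $u'$ vanishes. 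Since $g_1(1)=0$, at $r_1$ we get $Q(r_1)=T_1(r_1)>0$. Combining $Q(r_1)>0$ with $u'(r_1)<0$, $f(1)=0$ and \eqref{eq:Q} gives $r_1^n u'v'+(n-2)r_1^{n-1}u'v>0$. To place the zero $\tau_1$ in $(0,r_1)$, I would use Lemma~\ref{lem:inflection} together with a Sturm comparison of $v$ against $u'$ via the Wronskian $\rho$ and \eqref{eq:rhoprime}; there $f''>0$ for $u>1$. This should force $v(r_1)<0$, giving exactly one zero $\tau_1\in(0,r_1)$. The positivity of $M$ on $(0,r_1]$ after $\tau_1$ I would obtain from the connection identity \eqref{eq:connection}, together with $P>0$ (Lemma~\ref{lem:Ppositive}) and $\omega'>0$.

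\emph{Step 2 (crossing the low-amplitude strip $r_1\le r\le\bar b_1$).} This is the main obstacle. Here $\mathcal A$ and $h_1$ change sign, so neither $M$ nor $T_2$ can be controlled directly. I would use $B_0$, which is defined because $u'<0$ on $(0,c_1)$. By \eqref{eq:B0prime}, $B_0'=-2F(u)Q_n/(ru'^2)$ has a fixed sign wherever $Q_n$ does, because $F(u)<0$ for $|u|<\beta$. I would therefore first prove $Q_n>0$ on $[r_1,\bar b_1]$. The tool is $\Xi'=Q_n/(ru'^2)$: $\Xi=r^{n-1}v/u'$ is positive once $v<0$, and a first zero of $Q_n$ would contradict the evolution of $Q_n'$ given by the $Q_j'$ formula, since $u'v'$ and $f(u)v$ have controlled signs there.

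Given $Q_n>0$, $B_0$ is monotone across the strip. Since $B_0=Q$ at $u=\pm\beta$ (because $F(\pm\beta)=0$), this transfers the positivity of $Q$ near $r_1$ to $Q(\bar b_1)>0$. If $Q_n$ fails to be positive in some subrange, the fallback is $B_a$ with $a=g_2(A)$ and the explicit sign of $F_a$ from Lemma~\ref{lem:factor}.

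\emph{Step 3 (the post-strip piece $[b_1,c_1]$).} To show $v<0$ on $(\tau_1,c_1]$, I would again use the monotonicity of $v/u'$ via $Q_1>0$ and \eqref{eq:vuratio}, propagating $Q_1,Q_2>0$ by the $Q_j'$ identities: $Q_2'=2r^{n-1}u'v'$, and $Q_1'$ is controlled by $Q_2$ and $Q$. On $(b_1,c_1]$ we have $f(u)<0$ and $v<0$, so $Q'=2r^{n-1}f(u)v>0$. This gives $Q(\bar b_1)>Q(b_1)$ and $Q>0$ on $[\bar b_1,c_1]$, provided $Q(b_1)>0$ has been obtained through the bridge in Step 2. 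Finally, at $c_1$ we have $u'=0$, so $Q(c_1)=c_1^nf(u(c_1))v(c_1)>0$ and $M(c_1)=-c_1^{n-1}u(c_1)v'(c_1)$. The sign of $v'(c_1)$ follows from $Q_2(c_1)>0$ and the behaviour of $v$ near $c_1$, which yields $M(c_1)>0$.
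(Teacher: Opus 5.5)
Your proposal has a genuine gap in Step~2, which is the heart of the theorem.

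\textbf{Step 2 does not close.} You plan to prove $Q_n>0$ on $[r_1,\bar b_1]$ directly from the $Q_j'$ formula, but a local argument at a first zero of $Q_n$ cannot give a contradiction. At a zero of $Q_n$ one has $u'v'=-f(u)v-\frac{2(n-1)}{r}u'v$, hence
\[
 Q_n'=-2(n-1)r^{n-1}\Bigl(f(u)v+\frac{n}{r}\,u'v\Bigr).
\]
On $(r_1,z_1)$, where $f(u)<0$, $v<0$, $u'<0$, this is negative, which is exactly the sign of a genuine downward crossing. Likewise, ``propagating $Q_1,Q_2>0$ by the $Q_j'$ identities'' in Step~3 has no content, since $v'$ has no controlled sign in the low-amplitude region.

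The missing idea is a simultaneous first-event argument for $v$ and $Q_1$ (not $Q_n$), in which $B_0$ supplies the contradiction:
\begin{itemize}
\item As long as $v<0$ and $Q_1>0$, one has $u'v>0$, hence $Q_n>Q_1>0$. Then $B_0'>0$ by \eqref{eq:B0prime}, starting from $B_0(r_1)>Q(r_1)>0$ (because $F(1)<0$ and $v/u'>0$ at $r_1$).
\item A new zero of $v$ cannot occur first, since there $Q_1=r^nu'v'<0$.
\item At a first zero of $Q_1$ one has $Q=-r^{n-1}u'v$, so
\[
 B_0=-2r^{n-1}\frac{v}{u'}\,E<0 .
\]
This is impossible, because $E>E(c_1)=F(u(c_1))>0$ on $[0,c_1)$.
\end{itemize}
This yields $v<0$ and $Q_2>Q_1>0$ on $(r_1,\bar b_1]$ at once. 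No $B_a$ fallback is needed.

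\textbf{The point $b_1$ is misread.} $b_1$ is the prezero point with $u(b_1)=\beta$, so $b_1<r_1$; the point where $u=-1$ is $\bar r_1$. With the correct $b_1$, $Q(\bar b_1)>Q(b_1)$ is not a monotonicity statement, because $Q'=2r^{n-1}f(u)v<0$ on $(\max\{b_1,\tau_1\},r_1)$. It comes from the bridge:
\begin{itemize}
\item If $\tau_1\le b_1$, then $B_0$ increases on $(b_1,\bar b_1)$ and equals $Q$ at both ends, since $F(\pm\beta)=0$.
\item If $b_1<\tau_1$, use $Q'>0$ on $[b_1,\tau_1)$ and start $B_0$ at $\tau_1$, where $B_0(\tau_1)=Q(\tau_1)$. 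Positivity of $Q_n$ is not available on $(b_1,\tau_1)$, and for $n=3$, $p+q<3$ it need not hold there.
\end{itemize}

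\textbf{The argument for $M(c_1)>0$ is empty.} Since $u'(c_1)=0$, one has $Q_2(c_1)=Q(c_1)=c_1^nf(u(c_1))v(c_1)$, which does not involve $v'(c_1)$. Use the Wronskian instead. One has $\rho(\bar r_1)=\bar r_1^{n-1}f'(-1)u'v>0$. On $(\bar r_1,c_1)$, $f''(u)<0$ and $v<0$, so $\rho'=r^{n-1}f''(u)u'^2v>0$. Hence $\rho(c_1)=-c_1^{n-1}f(u(c_1))v'(c_1)>0$, which forces $v'(c_1)>0$ and $M(c_1)=-c_1^{n-1}u(c_1)v'(c_1)>0$.

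\textbf{Step 1 can be completed.} The $\rho$-argument gives $\tau_1\in(0,r_1)$. A first-zero argument for $M$ via \eqref{eq:connection} is a legitimate substitute for the paper's $P_1$ argument. Note, however, that $T_1'=-g_1'(u)u'M$, so positivity of $T_1$ is driven by $M>0$, not by $g_1>g_2$.
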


\begin{proof}
From Lemma~\ref{lem:nodal-geometry}, we obtain $\alpha>\beta$, $u(c_1)<-\beta$ and $u'<0$ on $(0,c_1)$.
This implies that
all four level points below are well defined and unique:
\begin{equation*}
 \begin{gathered}
 u(b_1)=\beta,\qquad u(r_1)=1,
 \qquad 0<b_1<r_1<z_1,\\
 u(\bar r_1)=-1,\qquad u(\bar b_1)=-\beta,
 \qquad z_1<\bar r_1<\bar b_1<c_1.
 \end{gathered}
\end{equation*}
Moreover, by $u(c_1)<-\beta$,
\begin{equation}\label{eq:firstphase-energy-positive}
 E(r)>E(c_1)=F(u(c_1))>0,\qquad 0\le r< c_1.
\end{equation}

We divide the argument into five steps.

\medskip\noindent
\textbf{Step 1: the first zero occurs before the level $u=1$.}
If $v$ had no zero on $(0,r_1)$, then $v>0$ on $[0,r_1)$ and, by continuity, $v(r_1)\ge0$.  Since $u>1$, \eqref{eq:high-signs} and \eqref{eq:rhoprime} give $\rho'>0$ on $(0,r_1)$, hence $\rho(r_1)>\rho(0)=0$.  But $f(1)=0$, $f'(1)=p-q>0$, and $u'(r_1)<0$, so
\[
 \rho(r_1)=r_1^{n-1}f'(1)u'(r_1)v(r_1)\le0,
\]
a contradiction.  Thus $v$ has a first zero $\tau_1\in(0,r_1)$, and $v'(\tau_1)<0$.

\medskip\noindent
\textbf{Step 2: there is no second zero while $u\geq1$.}
Before $\tau_1$, $f>0$, $uf'-f>0$, and $v>0$, so $Q',M'>0$.  Moreover
\[
 (r^{n-1}v')'=-r^{n-1}f'(u)v<0
\]
and $v'(0)=0$, hence $v'<0$.  Since $Q(0)=M(0)=T_1(0)=0$ and
\[
 T_1'=-g_1'(u)u'M,
\]
we obtain $Q,M,T_1>0$ on $(0,\tau_1]$.

Suppose that either $M$ ceases to be positive or $v$ has a second zero on
$(\tau_1,r_1]$, and let $s_*$ be the first such event.  At $\tau_1$, $M(\tau_1)>0$.
If $v$ had a second zero $\sigma\le r_1$ before $M$ vanished, then
$v'(\sigma)>0$ and
\[
 M(\sigma)=-\sigma^{n-1}u(\sigma)v'(\sigma)<0.
\]
Thus $M$ would already have vanished at an earlier point.  Hence the first
event must satisfy
\[
 M(s_*)=0,\qquad v(s_*)<0.
\]
On $(\tau_1,s_*)$, one has $M>0$, and therefore
$T_1'=-g_1'(u)u'M>0$.  It follows that
\[
 Q(s_*)=T_1(s_*)>0.
\]
The equality $M(s_*)=0$ gives $u'v=uv'$, and direct substitution yields
\[
 P_1(s_*)=Q(s_*)\frac{u(s_*)}{v(s_*)}.
\]
Here $u(s_*)\ge1$ and $v(s_*)<0$, while
$P_1(s_*)>0$ by Lemma~\ref{lem:Ppositive}.
The right-hand side is negative.  This contradiction excludes both events.
Consequently
\[
 M>0,\qquad v<0\quad\text{on }(\tau_1,r_1].
\]
Using $T_1'=-g_1'(u)u'M>0$ again, we obtain $T_1>0$ on $(\tau_1,r_1]$. Thus $Q=T_1+g_1M>0$ on $(\tau_1,r_1]$.

At $r_1$, $g_1(1)=0$, so
\begin{equation}\label{eq:Qr1-positive}
 Q(r_1)=T_1(r_1)>0.
\end{equation}
Since $u'v>0$ there, also $Q_1(r_1)>0$.

\medskip\noindent
\textbf{Step 3: the bridge $B_0$ rules out a second zero on $(r_1,z_1]$.}
Suppose that either $Q_1$ reaches zero or $v$ has a second zero on
$(r_1,z_1]$, and let $s_*$ be the first such event.  Before $s_*$,
\[
 v<0,\qquad Q_1>0,\qquad u'v>0,
\]
and hence $Q_n>Q_1>0$.  Since $F(u)<0$ on $(r_1,z_1)$,
\eqref{eq:B0prime} gives $B_0'>0$.  Also, by \eqref{eq:Qr1-positive},
\[
 B_0(r_1)=Q(r_1)-2F(1)\frac{r_1^{n-1}v(r_1)}{u'(r_1)}>0.
\]

The first event cannot be a zero of $v$.  Indeed, at a second zero, one has
$v'(s_*)>0$, and therefore
\[
 Q_1(s_*)=Q(s_*)=s_*^n u'(s_*)v'(s_*)<0.
\]
Thus $Q_1$ would have vanished strictly before $s_*$.
Consequently, the first event would have to satisfy
\[
 Q_1(s_*)=0,\qquad v(s_*)<0.
\]
Using $Q_1(s_*)=0$ in the definition of $B_0$, we obtain
\[
 B_0(s_*)=-2s_*^{n-1}\frac{v(s_*)}{u'(s_*)}E(s_*)<0,
\]
where \eqref{eq:firstphase-energy-positive} gives $E(s_*)>0$.  This contradicts
$B_0(s_*)>B_0(r_1)>0$.  Hence neither event occurs on $(r_1,z_1]$.
Consequently, $v$ has exactly one zero on $(0,z_1)$ and $v(z_1)<0$.

The same argument also records the $Q$-family signs.  On $(0,\tau_1]$, $Q_1',Q_2'>0$.  On $(\tau_1,r_1]$, $Q>0$ and $u'v>0$, hence $Q_2>Q_1>Q>0$.  On $(r_1,z_1]$, the bridge argument gives $Q_1>0$, and $Q_2>Q_1$ because $u'v>0$.  Thus
\begin{equation*}
 Q_1,Q_2>0\quad\text{on }(0,z_1].
\end{equation*}
In particular,
at $z_1$, the bridge identity $B_0(z_1)=Q(z_1)$ and the monotonicity from $r_1$ give
\begin{equation*}
Q(z_1)=B_0(z_1)>B_0(r_1)>0.
\end{equation*}

\medskip\noindent
\textbf{Step 4: passage through the postzero low-amplitude interval.}
On $(z_1,\bar b_1)$, one has $F<0$.  Suppose that either $Q_1$ reaches
zero or $v$ has a new zero on $(z_1,\bar b_1]$, and let $s_*$ be the first
event.  Before $s_*$, $v<0$, $Q_1>0$, and $u'v>0$,
so $Q_n>Q_1>0$.  Thus \eqref{eq:B0prime} gives $B_0'>0$ on $(z_1,\bar b_1)$.

If the first event were a zero of $v$, then $v'(s_*)>0$ and
\[
 Q_1(s_*)=Q(s_*)=s_*^n u'(s_*)v'(s_*)<0.
\]
Thus $Q_1$ would have reached zero earlier.  Hence the first event would have to satisfy $Q_1(s_*)=0$ with
$v(s_*)<0$.  As in Step~3,
\[
 B_0(s_*)=-2s_*^{n-1}\frac{v(s_*)}{u'(s_*)}E(s_*)<0,
\]
whereas $B_0(s_*)> B_0(z_1)=Q(z_1)>0$.  This contradiction proves
\[
 v<0,\qquad Q_2>Q_1>0\quad\text{on }(z_1,\bar b_1].
\]
In particular, since $F(-\beta)=0$,
\[
 Q(\bar b_1)=B_0(\bar b_1)>B_0(z_1)=Q(z_1)>0.
\]

We also record the comparison between the two points at the level $\abs{u}=\beta$.  If $\tau_1\leq b_1$, then $u'v>0$ and $Q_n> Q_1>0$ on $(b_1,\bar b_1)$.  Since $F<0$ there, \eqref{eq:B0prime} gives $B_0'>0$.  Hence
\[
 Q(\bar b_1)=B_0(\bar b_1)>B_0(b_1)=Q(b_1)>0.
\]
Suppose next that $b_1<\tau_1$.  On $[b_1,\tau_1)$, one has $f(u)>0$ and $v>0$, so \eqref{eq:Qprime} gives $Q'>0$.  At $\tau_1$, one has $B_0(\tau_1)=Q(\tau_1)$. While on $(\tau_1,\bar b_1)$, one has $u'v>0$ and $Q_n>Q_1>0$.
This with $F<0$
 gives $B_0'>0$ on $(\tau_1,\bar b_1)$.
Therefore
\[
 Q(\bar b_1)=B_0(\bar b_1)>B_0(\tau_1)=Q(\tau_1)>Q(b_1)>0.
\]
Thus $Q(\bar b_1)>Q(b_1)>0$ in both cases.

On $(\bar b_1,c_1]$, we have $u<-\beta$, hence $f(u)<0$.  A further zero $\sigma$ of $v$ would force $Q'=2r^{n-1}f(u)v>0$ on $(\bar b_1,\sigma)$ and $Q(\sigma)=\sigma^n u'(\sigma)v'(\sigma)<0$, which is impossible since $Q(\bar b_1)>0$.  Hence $v<0$ all the way to $c_1$, $Q_2(c_1)=Q_1(c_1)=Q(c_1)>0$, and $Q_2>Q_1>Q>0$ on $(\bar b_1,c_1)$, proving \eqref{eq:first-Q12}.

\medskip\noindent
\textbf{Step 5: recovery of $M(c_1)>0$.}
At $\bar r_1$, $f(-1)=0$ and $f'(-1)=p-q>0$.  Since $u'v>0$,
\[
 \rho(\bar r_1)=\bar r_1^{n-1}f'(-1)u'(\bar r_1)v(\bar r_1)>0.
\]
On $(\bar r_1,c_1)$, one has $u<-1$, hence $f''(u)<0$, while $v<0$; thus \eqref{eq:rhoprime} gives $\rho'>0$.  At $c_1$,
\[
 0<\rho(c_1)=-c_1^{n-1}f(u(c_1))v'(c_1).
\]
Since $f(u(c_1))<0$, we have $v'(c_1)>0$, and therefore
\[
 M(c_1)=-c_1^{n-1}u(c_1)v'(c_1)>0.
\]
This completes the proof.
\end{proof}

\section{Finite-phase transition}\label{sec:transition}
The phase-transition argument of \cite[Section~5]{Tang2026} requires control of \(Q_n\) on the part of each finite phase starting from the prezero \(F\)-level \(b\), characterized by
$$
F(u(b))=0,\qquad u(b)=\beta.
$$
For \(n\ge4\), and also for \(n=3\) with \(p+q\ge3\), the required pointwise positivity
$$
Q_n>0\quad\text{on }[b,c_+]
$$
follows from a direct algebraic contradiction; see Lemma~\ref{lem:Qn}. The only remaining parameter regime is
$$
n=3,\qquad p+q<3.
$$
In this residual case, \(Q_3\) need not be pointwise positive on the prezero interval \([b,\tau]\). Instead, we establish a weighted integral positivity estimate that is sufficient for the bridge argument and hence plays the role of pointwise positivity in the phase transition.

Let $u=u(\cdot,\alpha)$ be a bound state of \eqref{eq:shoot} and
$v=\partial_\alpha u$ be its variation.
Consider now an arbitrary finite phase \([c_-,c_+]\). Replacing the pair \((u,v)\) by \((-u,-v)\), if necessary, we may assume that the phase has the orientation specified below. This reflection entails no loss of generality. Indeed, \(f\) is odd, \(F\), \(h\), and \(h_1\) are even, \(h_2\) is odd, and the extension of \(g_2\) introduced above is even. Consequently, all phase functionals and identities used in the argument are invariant under the simultaneous transformation
$$
\widetilde u=-u,\qquad \widetilde v=-v.
$$
More precisely,
$$
 \widetilde P=P,\qquad
 \widetilde P_j=P_j,\qquad
 \widetilde\omega=\omega,\qquad
 \widetilde M=M,\qquad
 \widetilde Q=Q,\qquad
 \widetilde Q_j=Q_j,
$$
and
$$
 \widetilde\rho=\rho,\qquad
 \widetilde\theta=\theta,\qquad
 \widetilde T_2=T_2,\qquad
 \widetilde B_0=B_0.
$$
The same invariance holds for \(B_a\) when \(a=g_2(A)\). Hence every alternating finite phase may, without loss of information, be normalized so that
\begin{equation}\label{eq:phase-order}
 u(c_-)=A_->\beta,\qquad
 u'<0\quad\text{on }(c_-,c_+),\qquad
 u(c_+)=-A_+<-\beta,\qquad
 A_->A_+.
\end{equation}

Let
\[
0< c_-<b<r<z<\bar r<\bar b<c_+
\]
be determined by
\[
 u(b)=\beta,
 \qquad u(r)=1,
 \qquad u(z)=0,
 \qquad u(\bar r)=-1,
 \qquad u(\bar b)=-\beta.
\]
Suppose that at $c_-$,
\begin{equation}\label{eq:ind-data}
 Q(c_-)>0,
 \qquad M(c_-)>0,
 \qquad T_2(c_-)>0.
\end{equation}
We first justify carefully the analogue of Step 1 of Theorem~\ref{thm:firstphase}.  Since $u'(c_-)=0$ and $u(c_-)=A_->\beta>1$, the induction data \eqref{eq:ind-data} and the definitions \eqref{eq:M}--\eqref{eq:Q} give
\[
 Q(c_-)=c_-^n f(A_-)v(c_-)>0,
 \qquad
 M(c_-)=-c_-^{n-1}A_-v'(c_-)>0.
\]
Because $f(A_-)>0$ and $A_->0$, it follows that
\begin{equation}\label{eq:initial-v-signs}
 v(c_-)>0,
 \qquad
 v'(c_-)<0.
\end{equation}
In particular, using \eqref{eq:rho},
\begin{equation*}
 \rho(c_-)=-c_-^{n-1}f(A_-)v'(c_-)>0.
\end{equation*}

We claim that $v$ must vanish before the level $u=1$.  If not, then $v>0$ on $[c_-,r)$ and $v(r)\ge0$.  On $(c_-,r)$, one has $u>1$, and hence \eqref{eq:high-signs} and \eqref{eq:rhoprime} yield $\rho'(s)>0$.
Therefore, $\rho(r)>\rho(c_-)>0$.
On the other hand, $u(r)=1$, $f(1)=0$, $f'(1)=p-q>0$, and $u'(r)<0$, so by \eqref{eq:rho},
\[
 \rho(r)=r^{n-1}f'(1)u'(r)v(r)\le0,
\]
a contradiction.  Consequently, $v$ has a first zero
\begin{equation}\label{eq:tau-phase}
 \tau\in(c_-,r).
\end{equation}
Moreover, $v'(\tau)<0$.

It remains to record the signs before this first crossing.  For $c_-<s<\tau$, we have $u(s)>1$, $u'(s)<0$, and $v(s)>0$.  Moreover, by \eqref{eq:high-signs}, $f'(u(s))>0$, and the variational equation \eqref{eq:var} gives
\[
 \bigl(s^{n-1}v'(s)\bigr)'=-s^{n-1}f'(u(s))v(s)<0.
\]
Together with $v'(c_-)<0$ from \eqref{eq:initial-v-signs}, this implies
\[
 v'(s)<0\quad \text{on }(c_-,\tau).
\]
Since $u>1$ also gives $f(u)>0$, we conclude that throughout $(c_-,\tau)$,
\begin{equation*}
 uf(u),\
  f'(u), \
 uv, \
 u'v'>0,
 \qquad
 uu', \
  uv', \
  u'v, \
  vv'<0.
\end{equation*}

The following argument is the analogue of the first step in Tang's
phase-transition construction \cite[Lemma~5.1]{Tang2026}.

\begin{lemma}\label{lem:prep}
Assume \eqref{eq:ind-data}.  Then the first zero \eqref{eq:tau-phase} is the only zero of $v$ before $\max\{\tau,b\}$, and
\begin{equation}\label{eq:T2-prep}
 T_2,T_2'>0
 \quad\text{on }[c_-,\max\{\tau,b\}].
\end{equation}
Moreover
\begin{equation}\label{eq:Qfamily-prep}
 Q,M,  Q_1,  Q_2>0
 \quad\text{on }[c_-,\max\{\tau,b\}].
\end{equation}
\end{lemma}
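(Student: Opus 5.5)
We split the interval into $[c_-,\tau]$ and, when $b>\tau$, the extra piece $(\tau,b]$. This mirrors Steps 1--2 of Theorem~\ref{thm:firstphase}, with $T_2$ replacing $T_1$, since $g_2$ is defined up to $u=1$ and vanishes at $\beta$.

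\emph{The piece $[c_-,\tau]$.} By the sign table recorded before the lemma, on $(c_-,\tau)$ we have $f(u)v>0$, $(uf'(u)-f(u))v>0$ and $u'v'>0$. Hence \eqref{eq:Mprime}, \eqref{eq:Qprime} and the formulas for $Q_1',Q_2'$ show that $Q,M,Q_1,Q_2$ all increase. At $c_-$ we have $u'(c_-)=0$, so $Q_1(c_-)=Q_2(c_-)=Q(c_-)>0$, and $M(c_-)>0$ by \eqref{eq:ind-data}. Therefore all four are positive on $[c_-,\tau]$.

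For $T_2$ we use \eqref{eq:T2prime-regular}: $T_2'=g_2'(u)\bigl(r^{n-1}h_1(u)v-u'M\bigr)$. On $(c_-,\tau)$ we have $g_2'>0$, $h_1>0$, $v>0$, $u'<0$ and $M>0$, so $T_2'>0$. At $c_-$ itself the second term vanishes and $T_2'(c_-)=c_-^{n-1}g_2'(A_-)h_1(A_-)v(c_-)>0$. At $\tau$ the first term vanishes and $-u'M>0$. Together with $T_2(c_-)>0$ this gives \eqref{eq:T2-prep} on $[c_-,\tau]$. If $\tau\ge b$ we are done; there is then no zero of $v$ before $\tau$ other than $\tau$ itself.

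\emph{The piece $(\tau,b]$, when $b>\tau$.} Here $u\ge\beta$ and $v'(\tau)<0$, so $v<0$ just after $\tau$. We run the first-event argument of Step 2. Let $s_*\in(\tau,b]$ be the first point where $M$ vanishes or $v$ has a second zero. A second zero $\sigma$ would have $v'(\sigma)>0$, so $M(\sigma)=-\sigma^{n-1}u v'<0$ and $M$ would have vanished earlier. Hence $M(s_*)=0$ with $v(s_*)<0$.

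On $(\tau,s_*)$ we have $h_1v<0$ and $-u'M>0$, so the two terms of $T_2'$ compete. It is cleaner to use $T_1$ as in Step 2: $T_1'=-g_1'(u)u'M>0$, so $T_1$ is increasing there. We need $T_1(\tau)>0$, which follows from $Q(\tau)>0$, $M(\tau)>0$ and the inequality $g_1>g_2$: indeed $T_1(\tau)=Q(\tau)-g_1M(\tau)$, and $Q(\tau)$ equals $T_2(\tau)+g_2M(\tau)$.

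This is the main obstacle: $g_1>g_2$ gives the wrong inequality for deducing $T_1>0$ from $T_2>0$. So first try a different route, working directly with $T_2$ and using \eqref{eq:T2prime}, $T_2'=-g_2'(u)u'(M-\theta)$, where $\theta=h_1\Xi$. Since $u'<0$, $v<0$ and $h_1>0$, we get $\theta>0$. Note $M-\theta=(Q-Pv/u)/\omega$ by \eqref{eq:connection}, and $-Pv/u>0$ because $P>0$ (Lemma~\ref{lem:Ppositive}), $v<0$ and $u>0$. So $M-\theta>0$, and hence $T_2'>0$, as long as $Q>0$ and $\omega>0$. Then $Q=T_2+g_2M$ stays positive while $T_2,M>0$ and $g_2\ge0$, which gives a consistent bootstrap on $(\tau,b]$.

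This bootstrap excludes the event $M(s_*)=0$. At such a point $Q(s_*)=T_2(s_*)>0$, while $P_1(s_*)=Q(s_*)u/v<0$, contradicting $P_1>0$ from Lemma~\ref{lem:Ppositive}. It also excludes the event $Q=0$ occurring first: there $M-\theta=-Pv/(u\omega)>0$, so $T_2'>0$ and $T_2>0$, whence $Q=T_2+g_2M>0$.

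Finally, $Q_2>Q_1>Q>0$ on $(\tau,b]$ because $u'v>0$ there. This completes \eqref{eq:Qfamily-prep} and shows that $\tau$ is the only zero of $v$ before $\max\{\tau,b\}$.
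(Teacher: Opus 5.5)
Your proof is correct. On $[c_-,\tau]$ it is the paper's argument: $Q,M,Q_1,Q_2$ increase by the sign table, and $T_2'>0$ follows from \eqref{eq:T2prime-regular}. On $(\tau,b]$ you take a somewhat different route.

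The paper first excludes a second zero $\widetilde\tau\le b$ of $v$. At zeros of $v$ one has $T_2=r^{n-1}uv'\,[g_2(u)-\omega]$, and $g_2(u)$ decreases while $\omega$ increases, so $T_2(\widetilde\tau)<0$. Hence $T_2'$ vanishes at a first point $t_*$, where $M=\theta>0$ and \eqref{eq:connection} gives $Q=Pv/u<0$. Together with $T_2>0$ this would force $g_2<0$, which is impossible above $\beta$. The same contradiction yields $T_2'>0$ up to $b$. Then $Q>0$ follows from $Q'<0$ and $Q(b)=T_2(b)>0$, and $M>0$ from $P_1=Qu/v$.

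You instead run one first-exit argument on the triple $v<0$, $M>0$, $Q>0$:
\begin{itemize}
\item \eqref{eq:connection} turns $Q>0$ into $M>\theta$, hence $T_2'>0$;
\item $g_2\ge0$ and $M\ge0$ turn $T_2>0$ into $Q\ge T_2>0$;
\item a zero of $M$ is excluded by $P_1=Qu/v$;
\item a second zero of $v$ is excluded because $M=-r^{n-1}uv'<0$ there.
\end{itemize}
This is essentially the contrapositive of the paper's critical-point contradiction. It avoids the $g_2-\omega$ comparison at zeros of $v$ and the monotonicity of $Q$. The price is carrying $M$ inside the bootstrap, whereas the paper gets $M=\theta>0$ at $t_*$ for free.

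In a write-up, delete the abandoned $T_1$ detour. As you noticed, inferring $T_1(\tau)>0$ from $g_1>g_2$ goes the wrong way, and \eqref{eq:ind-data} gives no sign for $T_1(c_-)$. Also define the bootstrap explicitly as $s_*:=\sup\{s\in(\tau,b]: v<0,\ M>0,\ Q>0\ \text{on }(\tau,s]\}$, and check that all three conditions survive at $s_*$, including the case $s_*=b$.
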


\begin{proof}
On $[c_-,\tau]$, $Q$ and $M$
increase by \eqref{eq:Mprime}--\eqref{eq:Qprime}, thus $Q,M>0$.
At $c_-$, the identity
\eqref{eq:T2prime-regular} gives
\[
 T_2'(c_-)=c_-^{n-1}g_2'(u(c_-))h_1(u(c_-))v(c_-)>0.
\]
For $c_-<s\le\tau$, one has $u'<0$, $\theta\le0<M$, and
$g_2'(u)>0$.
Hence $T_2'>0$ on $[c_-,\tau]$, then $T_2>0$.

If $\tau\ge b$, $Q_1,Q_2$ increase by the sign record. Then $Q_j\geq Q_j(c_-)=Q(c_-)>0$ on $[c_-,\tau]$, $j=1,2$.  Suppose $\tau<b$ and assume that $v$ has a second zero $\widetilde\tau\le b$.  At the first zero,
\[
 0<T_2(\tau)=\tau^{n-1}u(\tau)v'(\tau)
 \bigl[g_2(u(\tau))-\omega(\tau)\bigr].
\]
Since $u(\tau)v'(\tau)<0$, we have $g_2(u(\tau))<\omega(\tau)$.  On $(\tau,\widetilde\tau)$, the amplitude decreases, so $g_2(u)$ decreases. On $\abs{u}>1$, Lemma~\ref{lem:Ppositive} gives $\omega'>0$.  At the second zero, $u(\widetilde\tau)v'(\widetilde\tau)>0$ and
 \begin{equation*}
 g_2(u(\widetilde\tau))-\omega(\widetilde\tau)<g_2(u(\tau))-\omega(\tau)<0,
\end{equation*}
hence the same formula would give $T_2(\widetilde\tau)<0$.
Since $T_2'(\tau)>0$, we know that,
before reaching $\widetilde\tau$, there is a first point $t_*$ at which $T_2'=0$ while $T_2>0$.  Formula \eqref{eq:T2prime} gives $M=\theta>0$ there.  Since $uv<0$ and $P>0$, the connection identity gives
\[
 Q=P\frac vu<0.
\]
But $T_2=Q-g_2M>0$ and $M>0$ imply $g_2<0$, equivalently $F<0$, whereas $t_*<b$ and hence $F>0$. Thus, $v$ has no zero on $(\tau,b]$. Moreover,
 this contradiction also shows that $T_2'$ cannot vanish before $b$.  Thus \eqref{eq:T2-prep} holds.

 On $(\tau,b]$, one has $u'v>0$ and $Q'=2r^{n-1}fv<0$, while $Q(b)=T_2(b)>0$ because $g_2(\beta)=0$.  Hence $Q>0$ on $(\tau,b]$, and therefore $Q_2>Q_1>Q>0$.

We also record that $M$ cannot vanish on $(\tau,b]$.  Indeed, at a first such zero $r_m$, the already established positivity of $T_2$ gives $Q(r_m)=T_2(r_m)>0$.  Since $M(r_m)=0$ means $u'v=uv'$, direct substitution in the definitions gives
\[
 P_1(r_m)=Q(r_m)\frac{u(r_m)}{v(r_m)}.
\]
Here $u>1$ and $v<0$, whereas $P_1>0$ by Lemma~\ref{lem:Ppositive}; this is impossible.  Hence $M>0$ on $(\tau,b]$.
\end{proof}

\begin{lemma}\label{lem:Qn}
Assume \eqref{eq:ind-data}. Suppose that $Q_1>0$ on $[c_-,c_+]$.  If either
\[
 n\ge4,
 \qquad\text{or}\qquad
 n=3\ \text{ and }\ p+q\ge3,
\]
then
\begin{equation}\label{eq:Qnpositive}
 Q_n>0\quad\text{on }[b,c_+].
\end{equation}
\end{lemma}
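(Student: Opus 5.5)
The plan is to reduce \eqref{eq:Qnpositive} to a short prezero interval on which $v>0$, and then exclude a first zero of $Q_n$ there by a derivative computation.

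\textbf{Reduction via the zero structure of $v$.} Write
\[
Q_n=Q_1+(n-1)r^{n-1}u'v .
\]
Since $u'<0$ on $(c_-,c_+)$, we get $Q_n>Q_1>0$ wherever $v<0$. At the endpoint $c_+$ one has $u'=0$, so $Q_n(c_+)=Q_1(c_+)>0$. Next, at any zero $\sigma$ of $v$ in $[c_-,c_+]$ the hypothesis gives $Q_1(\sigma)=\sigma^n u'(\sigma)v'(\sigma)>0$, hence $v'(\sigma)<0$. Thus every zero of $v$ in the phase is a crossing from positive to negative, so there is at most one. That zero is the point $\tau\in(c_-,r)$ of \eqref{eq:tau-phase}, and $v<0$ on $(\tau,c_+]$. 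If $\tau\le b$ we are done. Otherwise it remains to prove $Q_n>0$ on $[b,\tau]$. On this interval $1<u\le\beta$, $u'<0$, $v>0$ and $v'<0$. Moreover, Lemma~\ref{lem:prep} supplies $Q,M,Q_1,Q_2,T_2>0$ there, and Lemma~\ref{lem:Ppositive} supplies $P,P_1,P_2>0$. At $\tau$ itself, $Q_n(\tau)=Q_1(\tau)>0$.

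\textbf{First-zero argument on $[b,\tau]$.} Suppose $Q_n$ vanishes somewhere in $[b,\tau)$, and let $s$ be the last zero before $\tau$. Then $Q_n(s)=0$ and $Q_n'(s)\ge 0$. I will use $Q_n(s)=0$ to eliminate $v'$:
\[
u'v'=-f(u)v-\frac{2(n-1)}{r}u'v .
\]
Substituting into $Q_n'=r^{n-1}\bigl(nu'v'-(n-2)f(u)v\bigr)$ gives
\[
Q_n'(s)=2(n-1)s^{n-2}v(s)\bigl(-s f(u(s))-n\,u'(s)\bigr).
\]
Hence $Q_n'(s)\ge0$ forces $n\abs{u'(s)}\ge s f(u(s))$. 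Equivalently, with $\omega=-ru'/u$, this reads $n\,\omega(s)\ge s^2 f(u(s))/u(s)$.

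\textbf{Contradicting this inequality.} I will combine it with the other relations available at $s$:
\begin{itemize}
\item $Q_1(s)=-(n-1)s^{n-1}u'v$ and $Q(s)=-(2n-2)s^{n-1}u'v$, so $Q$ is forced to an explicit multiple of $\abs{u'}v$;
\item the connection identity \eqref{eq:connection}, $Q-Pv/u=\omega(M-\theta)$;
\item the positivity of $T_2=Q-g_2(u)M$ with $g_2(u)\le 0$ for $u\le\beta$;
\item the positivity of $P_2$, i.e.\ $2E+\tfrac{n-2}{r}uu'>\tfrac1n h(u)$.
\end{itemize}
Eliminating $v$ and $v'$ should leave a purely algebraic inequality in $u\in(1,\beta]$, $\omega$ and $r^2u^{q-1}$. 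I will insert the explicit double-power expressions \eqref{eq:h-exp}--\eqref{eq:h2-exp} and $f=u^q(u^d-1)$. The aim is to show that this inequality is incompatible with $n\omega\ge r^2f/u$ precisely when $n\ge4$, or when $n=3$ and $p+q\ge3$. For instance, it should reduce to the sign of a coefficient such as $(n-2)(p+q)-(n+2)+\dots$, evaluated at the worst endpoint $u=1$ or $u=\beta$.

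\textbf{Main obstacle.} The hardest step is this final algebraic elimination. The naive bound coming from the inflection point (Lemma~\ref{lem:inflection}) only yields $(n-1)\abs{u'}$ versus $rf$, which is not sufficient. The exponent threshold $p+q\ge3$ for $n=3$ must therefore appear through the $P_2$- or $T_2$-constraint. I would first try bounding $E$ below by $F(u)\ge F(1)$ and using $u\le\beta$, and check that the resulting polynomial inequality in $x=u^d$ has a fixed sign exactly in the stated range.
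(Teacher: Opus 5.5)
Your reduction to $[b,\tau)$ in the case $b<\tau$ is the same as the paper's. The decisive step, however, is missing, and the ingredients you list cannot supply it.

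Put $a:=-sv'(s)/v(s)>0$. Using $Q_n(s)=0$ one gets $-sf(u)-nu'=\abs{u'}\,\bigl(a-(n-2)\bigr)$, so your condition $Q_n'(s)\ge0$ is just $a\ge n-2$. But $M(s)>0$ is equivalent to $a>\omega(s)$. Also, $P(b)>0$ with $F(\beta)=0$ gives $\omega(b)>n-2$, and $P_1>0$ makes $\omega$ increasing. Hence $Q_n'(s)>0$ holds automatically at every zero of $Q_n$ in $[b,\tau)$; such zeros are always upward crossings, compare $\sigma$ in \eqref{eq:sigma}. So the inequality $n\omega\ge s^2f(u)/u$ carries no information.

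The other relations you list do not help either:
\begin{itemize}
\item at $s$, $Q=ns^{n-1}\abs{u'}v$ and $Q_1=(n-1)s^{n-1}\abs{u'}v$ are automatically positive;
\item \eqref{eq:connection} is an identity;
\item $T_2=Q+\abs{g_2(u)}M>0$ is automatic;
\item $P,P_2>0$ do not involve $v$ at all.
\end{itemize}
In terms of $a$, what remains is only $\omega<a=2(n-1)-s^2f(u)/(u\omega)$. These constraints are compatible, e.g.\ at $u=\beta$, where $F=0$ and $P_2>P>0$, with $\omega$ slightly above $n-2$ and moderate $s$. So no elimination of $v,v'$ from your list can give a contradiction, and bounding $E$ or $r^2u^{q-1}$ will not change that.

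The missing ingredient is the Wronskian $\rho=r^{n-1}\bigl(f'(u)u'v-f(u)v'\bigr)$ of $u'$ and $v'$, which records the history of $v$ on $(c_-,s)$. One has $\rho(c_-)>0$ and $\rho'=r^{n-1}f''(u)u'^2v>0$ on $(c_-,\tau)$, so $\rho(s)>0$, i.e.\ $a>\omega\,uf'(u)/f(u)$. Together with $a<2(n-1)$ (from $Q_n(s)=0$ and $f(u)>0$), this gives the scale-free bound
\[
 n-2<\omega(b)\le\omega(s)<\frac{2(n-1)f(u(s))}{u(s)f'(u(s))}.
\]
With $x=u(s)^{p-q}\in\bigl(1,\tfrac{p+1}{q+1}\bigr]$, this reads $[2(n-1)-(n-2)p]x>2(n-1)-(n-2)q$. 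When the coefficient of $x$ is positive, evaluate at $x=\tfrac{p+1}{q+1}$, where $uf'/f=p+q+1$. This leaves the sign of $-(p-q)\bigl((n-2)(p+q)-n\bigr)$. So the threshold is $(n-2)(p+q)\ge n$, not the coefficient you guessed: it is automatic for $n\ge4$ and is exactly $p+q\ge3$ for $n=3$. The case $n=3$, $p\ge4$ is immediate, since then $(4-p)x\le4-p<4-q$. This argument works at any zero of $Q_n$ in $[b,\tau)$; no derivative information is needed.
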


\begin{proof}
The assumption $Q_1>0$ excludes a second zero of $v$.  Indeed, at such a
zero, one would have $v'>0$ and
\[
 Q_1=r^n u'v'<0.
\]
Consequently, $v<0$ after $\tau$, and hence $u'v>0$ on $(\tau,c_+)$.  Thus
$Q_n>Q_1>0$ there.  At $\tau$ and $c_+$, one has
$Q_n=Q_1>0$.  Suppose that \eqref{eq:Qnpositive} fails.  By continuity,
$Q_n$ has a zero in $[b,\tau)$.  Let $s_0$ be its largest zero there.  Then
\[
 b\le s_0<\tau,\qquad Q_n(s_0)=0,
 \qquad Q_n>0\quad\hbox{on }(s_0,c_+].
\]

At $c_-$,
$\rho(c_-)>0$.  On $(c_-,\tau)$,
$\rho'>0$, so
$\rho(s_0)>0$.  At $s_0$, the equality $Q_n(s_0)=0$ reads
\[
 -u'v=\frac{s_0}{2(n-1)}\bigl(u'v'+f(u)v\bigr).
\]
The right-hand side is positive.  Moreover, $\rho(s_0)>0$ gives
\[
 -f(u)v'>-f'(u)u'v
 =\frac{s_0f'(u)}{2(n-1)}\bigl(u'v'+f(u)v\bigr)
 >\frac{s_0f'(u)}{2(n-1)}u'v'.
\]
Here all functions are evaluated at $s_0$.  Dividing by $-v'(s_0)>0$ and
using $\omega=-ru'/u$ yields
\begin{equation}\label{eq:add}
 \frac{2(n-1)f(u(s_0))}{u(s_0)f'(u(s_0))}>\omega(s_0).
\end{equation}
At $b$, $F(u(b))=0$ and $P(b)>0$, we obtain $\omega(b)>n-2$ from
  \begin{equation*}
    0<P(b)=b^nu'^2+(n-2)b^{n-1}uu'=b^{n-1}|u u'|\cdot[\omega(b)-(n-2)].
  \end{equation*}
  Since $b\leq s_0<r$, $1<u(s_0)\le\beta$ and $P_1>0$, so $\omega(s_0)\ge\omega(b)>n-2$.  With
\[
 x:=u(s_0)^{p-q}\in\left(1,\frac{p+1}{q+1}\right],
\]
from \eqref{eq:varpi} and \eqref{eq:add}, we obtain
\begin{equation}\label{eq:xineq}
 \bigl[2(n-1)-(n-2)p\bigr]x
 >2(n-1)-(n-2)q.
\end{equation}

For $n\ge4$, the coefficient of $x$ is positive under $p<(n+2)/(n-2)$.  Using $x\le(p+1)/(q+1)$, \eqref{eq:xineq} would imply
\[
 \bigl[2(n-1)-(n-2)p\bigr](p+1)
 >\bigl[2(n-1)-(n-2)q\bigr](q+1).
\]
The difference of the left side and the right side equals
\begin{equation}\label{eq:param-Qn}
 -(p-q)\bigl((n-2)(p+q)-n\bigr).
\end{equation}
For $n\ge4$, one has $p+q>2\ge n/(n-2)$, so \eqref{eq:param-Qn} is nonpositive and we obtain a contradiction.  This calculation explains the auxiliary threshold $p+q\ge n/(n-2)$, but also shows why it imposes no extra condition once $n\ge4$.

Let $n=3$.  If $p\ge4$, then \eqref{eq:xineq} reads
\[
 (4-p)x>4-q.
\]
For $q<4$, the right-hand side is positive whereas the left is nonpositive.  For $q\ge4$, since $x>1$ and $p>q$, one has $(4-p)x<4-p<4-q$.  Both cases are impossible.  If $p<4$, the coefficient $4-p$ is positive and the same endpoint estimate as above gives a contradiction precisely when $p+q\ge3$.  This proves the lemma.
\end{proof}

\begin{corollary}
\label{cor:Qn-first}
Let the notation be as in Theorem~\ref{thm:firstphase}.  If either
\[
 n\ge4,
 \qquad\text{or}\qquad
 n=3\ \text{ and }\ p+q\ge3,
\]
then
\[
 Q_n>0\quad\text{on }[b_1,c_1].
\]
\end{corollary}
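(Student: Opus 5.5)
The plan is to rerun the argument of Lemma~\ref{lem:Qn} on the first phase $[0,c_1]$, with the origin in place of $c_-$. Theorem~\ref{thm:firstphase} supplies exactly what that proof used from its hypotheses. Concretely, \eqref{eq:first-Q12} gives $Q_1>0$ on $(0,c_1]$, and $v$ has the single zero $\tau_1\in(0,r_1)$ with $v<0$ on $(\tau_1,c_1]$. Hence $u'v>0$ on $(\tau_1,c_1)$, and there $Q_n=Q_1+(n-1)r^{n-1}u'v>Q_1>0$. At $\tau_1$ and at $c_1$ we have $u'v=0$, so $Q_n=Q_1>0$ at both points. It therefore remains to show $Q_n>0$ on $[b_1,\tau_1)$, which matters only if $b_1<\tau_1$.

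Suppose this fails, and let $s_0\in[b_1,\tau_1)$ be the largest zero of $Q_n$ there. For the sign of $\rho$ I replace $\rho(c_-)>0$ by $\rho(0)=0$. On $(0,\tau_1)$ we have $u>1$, $u'<0$ and $v>0$, so \eqref{eq:rhoprime} and \eqref{eq:high-signs} give $\rho'>0$ and hence $\rho(s_0)>0$. The same sign record on $(0,\tau_1)$ holds: $v'<0$ by Step~2 of Theorem~\ref{thm:firstphase}, and $u'v'>0$, $f(u)v>0$. From $Q_n(s_0)=0$ and $\rho(s_0)>0$ the algebra of Lemma~\ref{lem:Qn} then yields \eqref{eq:add} verbatim.

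For the lower bound on $\omega$, I use $P(b_1)>0$ from Lemma~\ref{lem:Ppositive} (the first half-phase is a finite prezero phase) together with $F(\beta)=0$. This gives $\omega(b_1)>n-2$. On $[b_1,s_0]$ we have $u\ge1$, so $P_1>0$ and $\omega$ is increasing, whence $\omega(s_0)\ge\omega(b_1)>n-2$. Combining this with \eqref{eq:add} and \eqref{eq:varpi} produces \eqref{eq:xineq} for $x=u(s_0)^{p-q}\in(1,(p+1)/(q+1)]$. The parameter case analysis of Lemma~\ref{lem:Qn} then gives a contradiction: the sign of \eqref{eq:param-Qn} for $n\ge4$, and the cases $p\ge4$ and $p<4$ with $p+q\ge3$ for $n=3$.

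The only genuine check is that the origin can play the role of $c_-$. This requires that the $\rho$ monotonicity start from $\rho(0)=0$ rather than from a positive value. That is harmless, since strict increase still gives $\rho(s_0)>0$ for $s_0>0$. One must also note $s_0\ge b_1>0$, so that all quotients are defined. I expect no other obstacle.
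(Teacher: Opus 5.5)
Your proposal is correct and follows the paper's proof essentially step for step. Positivity on $[\tau_1,c_1]$ comes from $Q_n\ge Q_1>0$, and a largest zero $s_0\in[b_1,\tau_1)$ is then ruled out using $\rho(0)=0$, $\rho'>0$ on $(0,\tau_1)$, $\omega(s_0)\ge\omega(b_1)>n-2$, and the parameter inequality \eqref{eq:xineq}; the paper merely splits off the case $\tau_1\le b_1$ explicitly, which your uniform treatment already covers.
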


\begin{proof}
Theorem~\ref{thm:firstphase} gives $Q_1>0$ on $(0,c_1]$ and
shows that $v$ has exactly one zero $\tau_1$.  If $\tau_1\le b_1$, then
$u'v\ge0$ on $[b_1,c_1]$, with equality only at $\tau_1$ and $c_1$.
Hence $Q_n\ge Q_1>0$ there.

Suppose that $b_1<\tau_1$.
Then $Q_n\geq Q_1>0$ on $[\tau_1,c_1]$.
 If the conclusion failed, continuity
would give a largest zero
$s_0\in[b_1,\tau_1)$.  On $(0,\tau_1)$, one has $u>1$, $v>0$, and
\[
 \rho(0)=0,\qquad
 \rho'=r^{n-1}f''(u)u'^2v>0.
\]
Thus $\rho(s_0)>0$.  Since $s_0\geq b_1$, the equality $Q_n(s_0)=0$ and the
preceding calculation give
\[
 \frac{2(n-1)f(u(s_0))}{u(s_0)f'(u(s_0))}>\omega(s_0)
 \ge\omega(b_1)>n-2.
\]
Hence \eqref{eq:xineq} holds.  The parameter calculation in
\eqref{eq:param-Qn} gives the same contradiction for $n\ge4$, or for
$n=3$ with $p+q\ge3$.
\end{proof}

\subsection{The three-dimensional residual range}

We now assume
\begin{equation}\label{eq:residual-range}
 n=3,\qquad 1<q<p,\qquad p+q<3.
\end{equation}
Then automatically
\begin{equation}\label{eq:residual-auto}
 1<q<\frac32,\qquad q<p<2.
\end{equation}
These are consequences of $p+q<3$ and $q<p$, not additional assumptions.  In a normalized phase, suppose $b<\tau$, so that a possible negative part of $Q_3$ lies in $[b,\tau)$. In fact, if $\tau\le b$, then
\[
 Q_3\ge Q_1>0\quad\text{on }[b,c_+],
\]
with equality possible only at $b=\tau$ or at the critical endpoint.
Thus no residual argument is needed.

\begin{lemma}\label{lem:residual-parameter}
Assume $1<q<p$ and $p+q<3$.
Then
\begin{equation}\label{eq:beta-basic-bounds}
 \beta<\sqrt e,
 \qquad
 {\beta^{q-1}(3\beta-4)<3\sqrt e-4<1.}
\end{equation}
Moreover, if
\begin{equation}\label{eq:kappa-window}
 1<\kappa<3(\beta-1),
\end{equation}
one has
\begin{equation}\label{eq:kappa-weight-bound}
 {\kappa \beta^{q-1}\bigl(3(\beta-1)-\kappa\bigr)
 <3\sqrt e-4<1.}
\end{equation}
\end{lemma}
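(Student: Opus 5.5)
The plan is to reduce all three inequalities to a single elementary bound on $\log\beta$, and then to a one-variable monotonicity check.

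\textbf{Step 1: the bound $\beta<\sqrt e$.} By definition,
\[
 \log\beta=\frac{\log(p+1)-\log(q+1)}{p-q}.
\]
The mean value theorem applied to $\log$ on $[q+1,p+1]$ gives $\log\beta=1/\xi$ for some $\xi\in(q+1,p+1)$. Hence
\[
 \log\beta<\frac{1}{q+1}=:t .
\]
Since $q>1$, we have $t<1/2$, and therefore $\beta<e^{t}<\sqrt e$. By \eqref{eq:residual-auto}, $1<q<3/2$, so $t\in(2/5,1/2)$. The final inequality $3\sqrt e-4<1$ is equivalent to $e<25/9$, which is true.

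\textbf{Step 2: the bound $\beta^{q-1}(3\beta-4)<3\sqrt e-4$.} If $3\beta-4\le0$, this is immediate, because $3\sqrt e-4>0$. Otherwise both factors are positive, and Step 1 gives
\[
 \beta^{q-1}<e^{(q-1)t}=e^{1-2t},\qquad 3\beta-4<3e^{t}-4,
\]
where we used $(q-1)/(q+1)=1-2t$. Thus
\[
 \beta^{q-1}(3\beta-4)<\varphi(t):=3e^{1-t}-4e^{1-2t}.
\]
Differentiating,
\[
 \varphi'(t)=e^{1-2t}\bigl(8-3e^{t}\bigr)>0,
\]
because $e^{t}<\sqrt e<8/3$. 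So $\varphi$ is increasing on $(2/5,1/2)$. Hence $\varphi(t)<\varphi(1/2)=3\sqrt e-4$, and strictness follows from $t<1/2$.

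\textbf{Step 3: the bound \eqref{eq:kappa-weight-bound}.} The window \eqref{eq:kappa-window} is nonempty only when $3(\beta-1)>1$, that is $\beta>4/3$. In that case $3\beta-4>0$, so Step 2 applies in its nontrivial case.

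The quadratic $\kappa\mapsto\kappa\bigl(3(\beta-1)-\kappa\bigr)$ has its vertex at $\kappa=3(\beta-1)/2$. Step 1 gives $\beta<\sqrt e<5/3$, so this vertex is less than $1$. The quadratic is therefore strictly decreasing on the whole window. Consequently
\[
 \kappa\bigl(3(\beta-1)-\kappa\bigr)<1\cdot\bigl(3(\beta-1)-1\bigr)=3\beta-4 .
\]
Multiplying by $\beta^{q-1}>0$ and invoking Step 2 yields \eqref{eq:kappa-weight-bound}.

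\textbf{Main obstacle.} The delicate point is Step 3. Using the cruder vertex bound $\tfrac94(\beta-1)^2$ would fail, since its limiting value $\tfrac94(\sqrt e-1)^2\approx0.9468$ just exceeds $3\sqrt e-4\approx0.9462$. One must exploit the window constraint $\kappa>1$ together with $\beta<5/3$, which places the vertex outside the window.
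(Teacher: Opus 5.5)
Your proposal is correct and follows the paper's proof essentially step for step. You bound $\log\beta<1/(q+1)$ by the mean value theorem, where the paper writes the same quantity as an integral mean; you show $3e^{1-t}-4e^{1-2t}$ is increasing via its derivative $e^{1-2t}(8-3e^{t})$; and you note that $\beta<\sqrt e<5/3$ places the vertex of $\kappa\mapsto\kappa(3(\beta-1)-\kappa)$ below $1$, so on the window the quadratic is bounded by its value $3\beta-4$ at $\kappa=1$ (your closing remark is accurate for the stated constant $3\sqrt e-4$, though the crude vertex bound would still give the weaker bound $<1$, which is all that Step~4 of Lemma~\ref{lem:residual} uses).
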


\begin{proof}
The integral mean formula gives
\[
 \log \beta
 =\frac1{p-q}\int_q^p\frac{\dd t}{t+1}
 <\frac1{q+1}<\frac12,
\]
so $\beta<\sqrt e$.  By \eqref{eq:residual-auto}, $q<3/2$.  Put $t=(q+1)^{-1}\in(2/5,1/2)$.  If $3\beta\le4$, the second inequality in \eqref{eq:beta-basic-bounds} is immediate.  If $3\beta>4$, monotonicity in $\beta$ and the preceding exponential bound give
\begin{align*}
 \beta^{q-1}(3\beta-4)
 <e^{\frac{q-1}{q+1}}\left(3e^{\frac{1}{q+1}}-4\right)
 =3e^{1-t}-4e^{1-2t}
 <3\sqrt e-4<1.
\end{align*}
Here $3e^{1-t}-4e^{1-2t}$ is strictly increasing on $(2/5,1/2)$, because its derivative is $e^{1-2t}(8-3e^t)>0$ there.

Finally, $3(\beta-1)<3(\sqrt e-1)<2$.  Hence
\[
 y\longmapsto y\bigl(3(\beta-1)-y\bigr)
\]
is strictly decreasing for $y\ge1$.  Under \eqref{eq:kappa-window}, necessarily $\beta>4/3$, and therefore
\[
 \kappa\bigl(3(\beta-1)-\kappa\bigr)
 <3\beta-4.
\]
Multiplying by $\beta^{q-1}$ and using \eqref{eq:beta-basic-bounds} proves \eqref{eq:kappa-weight-bound}.
\end{proof}

\begin{lemma}\label{lem:residual}
Assume \eqref{eq:ind-data} and \eqref{eq:residual-range}. Consider a normalized phase with $b<\tau$.
Then for every $A\ge\beta$,
\begin{equation}\label{eq:residual-integral}
 \int_b^\tau
 u(r)^{q+1}
 \left[1-\left(\frac{u(r)}{A}\right)^{p-q}\right]
 \frac{Q_3(r)}{r u'(r)^2}\dd r>0.
\end{equation}
\end{lemma}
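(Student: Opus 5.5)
The plan is to recognise the integrand as a bridge derivative and then reduce to two endpoint values of $A$. By Lemma~\ref{lem:factor}, with $a=g_2(A)$ we have $-2F_a(u)=C_A u^{q+1}[1-(u/A)^{p-q}]$ on $[b,\tau]$, where $1<u\le\beta\le A$. Hence, by \eqref{eq:Baprime},
\[
 \int_b^\tau u^{q+1}\Bigl[1-\Bigl(\tfrac{u}{A}\Bigr)^{p-q}\Bigr]\frac{Q_3}{ru'^2}\dd r=\frac{B_a(\tau)-B_a(b)}{C_A}.
\]
The weight is affine in the parameter $\lambda:=A^{-(p-q)}\in(0,\beta^{-(p-q)}]$. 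So the integral is an affine function of $\lambda$, and it suffices to prove positivity at the two ends: $\lambda=\beta^{-(p-q)}$, i.e.\ $A=\beta$, and the limit $\lambda\to0$, which corresponds to the pure weight $u^{q+1}$.

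\emph{The endpoint $A=\beta$.} Here $a=g_2(\beta)=0$, and the integral is a positive multiple of $B_0(\tau)-B_0(b)$. Since $F(\beta)=0$ and $v(\tau)=0$, this difference equals $Q(\tau)-Q(b)$. On $(b,\tau)$ we have $f(u)>0$ and $v>0$, so $Q'=2r^2f(u)v>0$ by \eqref{eq:Qprime}. Hence this endpoint is immediate.

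\emph{The endpoint $\lambda=0$.} This is the genuine residual estimate. Since $\Xi'=Q_3/(ru'^2)$, I integrate by parts:
\[
 \int_b^\tau u^{q+1}\Xi'\dd r=-\beta^{q+1}\Xi(b)-(q+1)\int_b^\tau u^q r^2 v\dd r ,
\]
using $\Xi(\tau)=0$ and $u'\Xi=r^2v$. Here $-\Xi(b)=b^2v(b)/|u'(b)|>0$. On $(b,\tau)$ the record before Lemma~\ref{lem:prep} gives $0<v\le v(b)$ and $u\le\beta$. So it suffices to show
\[
 (q+1)\int_b^\tau r^2\dd r<\frac{\beta\, b^2}{|u'(b)|}=\frac{b^3}{\kappa},\qquad \kappa:=\omega(b).
\]
From the proof of Lemma~\ref{lem:Qn}, $P(b)>0$ gives $\kappa>n-2=1$. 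To bound the length of $[b,\tau]$, I use that $\omega$ increases on $\{u\ge1\}$ (Lemma~\ref{lem:Ppositive}). This gives $|u'|=\omega u/r\ge\kappa/r$ on $[b,\tau]$, hence $\beta-1>\kappa\log(\tau/b)$. It follows that $\tau<b\,e^{(\beta-1)/\kappa}$ and
\[
 \int_b^\tau r^2\dd r<\frac{b^3}{3}\bigl(e^{3(\beta-1)/\kappa}-1\bigr).
\]

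The main obstacle is the resulting purely numerical inequality, roughly $(q+1)\kappa\bigl(e^{3(\beta-1)/\kappa}-1\bigr)<3$, uniformly in the residual range \eqref{eq:residual-range}. If $\kappa\ge3(\beta-1)$ the exponent is at most $1$, and I expect a direct check using $\beta<\sqrt e$ from Lemma~\ref{lem:residual-parameter} and $q<3/2$. In the window \eqref{eq:kappa-window}, the crude bounds $v\le v(b)$ and $u^q\le\beta^q$ will probably lose too much. There I would refine two things. First, retain the factor $u^{q-1}$, replacing $u^q r^2$ by $u^{q-1}\cdot u r^2$ and bounding $u^{q-1}\le\beta^{q-1}$. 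Second, linearise the length bound as $\tau-b\lesssim b(\beta-1)/\kappa$, which produces a quadratic expression $\kappa\bigl(3(\beta-1)-\kappa\bigr)$. The target is precisely the bound \eqref{eq:kappa-weight-bound}, $\kappa\beta^{q-1}(3(\beta-1)-\kappa)<1$. If the exponential estimate does not fit, I would instead use the concavity and convexity information of Lemma~\ref{lem:inflection} to control $|u'|$ on $[b,\tau]$ more sharply.
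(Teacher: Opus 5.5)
Your reduction is valid. The integral is affine in $\lambda=A^{-(p-q)}$, so strict positivity at $A=\beta$ together with nonnegativity at $\lambda=0$ would suffice. The case $A=\beta$ is indeed immediate: the integral equals $\tfrac{q+1}{2}\bigl(Q(\tau)-Q(b)\bigr)>0$.

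The gap is the endpoint $\lambda=0$, with the pure weight $u^{q+1}$. It carries the whole difficulty of the lemma, and your argument for it fails. After integrating by parts you replace $v$ by $v(b)$ and $u^q$ by $\beta^q$. This discards all information about where $Q_3$ is positive. The resulting inequality $(q+1)\kappa\bigl(e^{3(\beta-1)/\kappa}-1\bigr)<3$ is false on part of the residual range for every value of $\kappa$. Indeed, since $e^x-1>x$, its left side exceeds $3(q+1)(\beta-1)$. For $q=1.01$, $p=1.02$ one has $\beta\approx1.643$ and $(q+1)(\beta-1)\approx1.29>1$. In particular, your expected ``direct check'' for $\kappa\ge3(\beta-1)$ does not hold.

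You also cannot invoke \eqref{eq:kappa-weight-bound} for your $\kappa=\omega(b)$, because nothing bounds $\omega(b)$ above by $3(\beta-1)$. In the paper the window \eqref{eq:kappa-window} concerns $\kappa=\omega(\tau)/\omega(b)$. It comes from the sign change of $Q_3$, not from a length bound on $[b,\tau]$.

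The missing idea is to use the shape of $Q_3$ on $[b,\tau]$.
\begin{itemize}
\item Write $Q_3'=r^2u'v'(3-\mathcal R)$ with $\mathcal R=f(u)v/(u'v')$. One shows $\mathcal R$ is strictly decreasing, using $3f/(uf')<1<\omega<-rv'/v$, the last inequality coming from $M>0$.
\item At any critical point $r_0$ one has $Q_3(r_0)=4r_0^2u'v\bigl(1+r_0v'/v\bigr)>0$. Hence either $Q_3\ge0$ on $[b,\tau]$, or $Q_3<0$ exactly on an initial interval $[b,\sigma)$ with $Q_3'>0$ on $(b,\tau)$.
\item In the second case, combine $Q_3(\tau)=\tau^3u'(\tau)v'(\tau)$, the bound $Q_3'<3r^2u'v'$, and the increase of $r^2\abs{v'}$. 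This gives $3\bigl(u(\sigma)-u(\tau)\bigr)>\tau\abs{u'(\tau)}$, which is the source of $\kappa<3(\beta-1)$ and, through \eqref{eq:kappa-weight-bound}, of the amplitude ratio \eqref{eq:weighted-length}.
\item Add the decrease of $\abs{u'}$ on $(b,\tau)$ (the inflection point lies before $b$) and the equal-amplitude pairing $Q_3(r^+)>\abs{Q_3(r^-)}$. Together these yield $\int_\sigma^\tau u^{q-1}Q_3\,\dd r>\kappa^2\int_b^\sigma u^{q-1}\abs{Q_3}\,\dd r$.
\item The factor $\kappa^2$ then absorbs the variation of $r/\omega^2$ in the weight $u^{q+1}/(ru'^2)=u^{q-1}r/\omega^2$.
\end{itemize}
Since $1-(u(r)/A)^{p-q}$ increases in $r$, this argument handles all $A\ge\beta$ at once. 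Your affine reduction is correct, but it does not make the core estimate any easier.
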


\begin{proof}
On $(b,\tau)$,
\[
 1<u< \beta,\qquad v>0,\qquad u'<0,\qquad v'<0,\qquad M>0.
\]
We organize the proof in seven steps.  They are the double-power
counterparts of the residual comparison 
\cite[Lemma~5.4]{Tang2026}.

\medskip\noindent
\textbf{Step 1: logarithmic slopes and the decreasing ratio $\mathcal R$.}
Since $P(b)>0$ and $F(\beta)=0$, we have $\omega(b)>n-2=1$.
Moreover $P_1>0$ on $u>1$, so $\omega$ is strictly increasing.  From $M>0$, we obtain
\begin{equation}\label{eq:aomega}
 a(r):=-\frac{rv'(r)}{v(r)}>\omega(r)>\omega(b)>1.
\end{equation}
Set
\[
 \mathcal R(r):=\frac{f(u(r))v(r)}{u'(r)v'(r)}>0.
\]
For $x=u^d$,
\[
 \frac{3f(u)}{u f'(u)}=\frac{3(x-1)}{px-q}.
\]
The right-hand side increases with $x$, and $x< \beta^d=(p+1)/(q+1)$; hence
\begin{equation*}
 \frac{3f(u)}{u f'(u)}<\frac{3}{p+q+1}<1<\omega.
\end{equation*}
A direct differentiation using the equations for $u$ and $v$ gives
\begin{align*}
 u'v'\mathcal R'
 &=f'u'v+fv'
 +\mathcal R\left(\frac4r u'v'+fv'+f'u'v\right)\\
 &<f'u'v+fv'+\frac4r fv\\
 &=\frac{fv}{r}\left(\frac{rv'}v+1\right)
 +\frac{f'uv}{r}\left(\frac{3f}{uf'}-\omega\right)<0.
\end{align*}
Since $u'v'>0$,
\begin{equation}\label{eq:Rdecrease}
 \mathcal R'(r)<0 \quad \text{on }(b,\tau).
\end{equation}

\medskip\noindent
\textbf{Step 2: the sign pattern of $Q_3$.}
For $n=3$,
\begin{equation}\label{eq:Q3prime}
 Q_3'=r^2\bigl(3u'v'-fv\bigr)
 =r^2u'v'(3-\mathcal R).
\end{equation}
If $Q_3\ge0$ on $[b,\tau]$, by \eqref{eq:Qfamily-prep},
\[
 Q_3(\tau)=Q(\tau)>0.
\]
Moreover
$u(\tau)<\beta\le A$, so all other factors in the integrand of
\eqref{eq:residual-integral} are strictly positive in a left neighborhood
of $\tau$.  Hence the integral is strictly positive.  Suppose therefore
that $Q_3$ is negative somewhere on $(b,\tau)$.  Equation \eqref{eq:Rdecrease} shows that
$3-\mathcal R$ is strictly increasing.  Thus $Q_3'$ has at most one zero.
If it has one, its sign changes from negative to positive.  The zero is
therefore the absolute minimum point $r_0$ of $Q_3$.  At this point
$\mathcal R(r_0)=3$ (equivalently, $3u'(r_0)v'(r_0)=f(u(r_0))v(r_0)$), and
\begin{align*}
 \frac{Q_3(r_0)}{r_0^2}=&r_0\bigl(u'(r_0)v'(r_0)+f(u(r_0))v(r_0)\bigr)+4u'(r_0)v(r_0)
 \\=&4r_0u'(r_0)v'(r_0)+4u'(r_0)v(r_0)
 =4u'(r_0)v(r_0)
 \left(1+\frac{r_0v'(r_0)}{v(r_0)}\right)>0
\end{align*}
by \eqref{eq:aomega}.  This value is positive.  It contradicts the
assumed negative value of $Q_3$, because $r_0$ would be its absolute
minimum.  Hence $Q_3'$ has no zero.  Since
$Q_3'(\tau)=3\tau^2u'(\tau)v'(\tau)>0$, it follows that $Q_3'>0$
on $(b,\tau)$.  The negative set is an initial interval.  The
positivity of $Q_3(\tau)=Q(\tau)$ now gives a unique
\begin{equation}\label{eq:sigma}
 \sigma\in(b,\tau),\qquad
 Q_3<0 \quad \text{on }[b,\sigma),\qquad
 Q_3(\sigma)=0,\qquad
 Q_3>0 \quad \text{on }(\sigma,\tau].
\end{equation}
Since $u'v'>0$ and $Q_3'>0$ on $(b,\tau)$, identity
\eqref{eq:Q3prime} also gives
\begin{equation}\label{eq:R-window}
 0<\mathcal R(r)<3\quad \text{on }(b,\tau).
\end{equation}

\medskip\noindent
\textbf{Step 3: the speed $\abs{u'}$ decreases on the residual interval.}
Lemma~\ref{lem:inflection} gives a unique inflection point on the prezero
interval $u>1$.  Before this point $u''<0$, and after it $u''>0$.  If the
point were at or to the right of $b$, then $u''(b)\le0$.  The equation for
$u$ would give
\begin{equation*}
 -\frac{2u'(b)v(b)}{b}\le f(\beta)v(b).
\end{equation*}
Since $Q_3(b)<0$ by \eqref{eq:sigma},
\[
 b\bigl(u'v'+fv\bigr)<-4u'v\le2b fv,
\]
and hence $u'v'+fv>2u'v'>0$.  On the other hand,
$\rho(c_-)>0$, and on $(c_-,b)$,
$\rho'>0$, so
$\rho(b)>0$. Then
\[
 -fv'>-f'u'v
 >\frac{b f'}4\bigl(u'v'+fv\bigr)
 >\frac{b f'}2u'v'.
\]
Dividing by $-v'(b)>0$
yields
\[
 \frac{2f(\beta)}{\beta f'(\beta)}>\omega(b)>1.
\]
But $\beta^d=(p+1)/(q+1)$ gives the exact identity
\begin{equation*}
 \frac{2f(\beta)}{\beta f'(\beta)}=\frac{2}{p+q+1}<1,
\end{equation*}
a contradiction.  Thus the inflection point lies strictly before $b$, and
\begin{equation}\label{eq:speed-decrease}
 \abs{u'(r)}\text{ is strictly decreasing on }(b,\tau).
\end{equation}

\medskip\noindent
\textbf{Step 4: quantitative separation of the positive and negative $Q_3$ regions.}
Since $v'<0$ and
\[
 (r^2v')'=-r^2f'(u)v<0,
\]
the function $r^2\abs{v'}$ is strictly increasing.  Integrating \eqref{eq:Q3prime} from $\sigma$ to $\tau$ gives
\begin{align*}
\tau^3\abs{u'(\tau)v'(\tau)}= Q_3(\tau)
 <3\int_\sigma^\tau r^2\abs{u'v'}\dd r<3\tau^2\abs{v'(\tau)}\bigl(u(\sigma)-u(\tau)\bigr).
\end{align*}
Therefore
\begin{equation}\label{eq:S6general}
 3\bigl(u(\sigma)-u(\tau)\bigr)>\tau\abs{u'(\tau)}.
\end{equation}
Put
\[
 S:=u(\sigma)> T:=u(\tau)>1,\qquad \omega(\tau)>\omega(b)>1,
 \qquad \kappa:=\frac{\omega(\tau)}{\omega(b)}>1.
\]
Then \eqref{eq:S6general} gives
\begin{equation*}
S-T>\frac{\tau\abs{u'(\tau)}}{3}>\frac{\tau\abs{u'(\tau)}}{3u(\tau)\omega(b)}=-\frac{\tau{u'(\tau)}}{3u(\tau)\omega(b)}=\frac{\kappa}{3}.
\end{equation*}
 In particular, $\kappa<3(\beta-1)$ because
$S-T<\beta-T<\beta-1$.  Also
\[
 (\beta-S)+(S-T)=\beta-T<\beta-1.
\]
Thus $\beta-S<\beta-1-\kappa/3$, and hence
\begin{equation*}
 \frac{S-T}{\beta-S}
 >\frac{\kappa}{3(\beta-1)-\kappa},
 \qquad 1<\kappa<3(\beta-1).
\end{equation*}
The denominator is positive.  Lemma~\ref{lem:residual-parameter} gives
$\kappa\beta^{q-1}[3(\beta-1)-\kappa]<1$. Hence
\begin{equation}\label{eq:weighted-length}
 \frac{S-T}{\beta-S}
 >\kappa^2\beta^{q-1}.
\end{equation}
This is the point at which the double-power proof differs
quantitatively from the pure-power residual estimate in
\cite[Lemma~5.4]{Tang2026}: the factor $\beta^{q-1}$ is exactly what is
needed to absorb the additional weight below.

\medskip\noindent
\textbf{Step 5: equal-amplitude pairing of $Q_3$.}
The bound \eqref{eq:R-window} gives $0<\mathcal R(\sigma)<3$.
For $r^-\in[b,\sigma)$, define $r^+\in(\sigma,\tau)$ by
\[
 u(r^-)-S=S-u(r^+).
\]
This point exists and is unique.  Indeed,
\eqref{eq:weighted-length} implies $S-T>\beta-S$. Since $\mathcal{R}$ decreases and $r^2|v'|$ increases,
by $\dd u=u'\dd r$, integration of \eqref{eq:Q3prime} gives
\begin{align*}
 \abs{Q_3(r^-)}=\int^\sigma_{r^-} r^2u'v'(3-\mathcal R)\dd r<\bigl(3-\mathcal R(\sigma)\bigr)\int^\sigma_{r^-} r^2u'v'\dd r
 &<\bigl(3-\mathcal R(\sigma)\bigr)\sigma^2\abs{v'(\sigma)}
 \abs{u(r^-)-S},\\
 Q_3(r^+)=\int_\sigma^{r^+} r^2u'v'(3-\mathcal R)\dd r>\bigl(3-\mathcal R(\sigma)\bigr)\int_\sigma^{r^+} r^2u'v'\dd r
 &>\bigl(3-\mathcal R(\sigma)\bigr)\sigma^2\abs{v'(\sigma)}
 \abs{S-u(r^+)},
\end{align*}
and hence
\begin{equation*}
 Q_3(r^+)>\abs{Q_3(r^-)}.
\end{equation*}

Define normalized points $\widetilde r_x\in[b,\sigma]$ and $\widehat r_x\in[\sigma,\tau]$ by
\[
 u(\widetilde r_x)=S+(\beta-S)x,
 \qquad
 u(\widehat r_x)=S-(S-T)x,
 \qquad 0\le x\le1.
\]
For $x>0$, the amplitude drop from $S$ to $u(\widehat r_x)$ is
$(S-T)x$.  The drop paired with $\widetilde r_x$ is $(\beta-S)x$.  Since
$S-T>\beta-S$, $\widehat r_x$ lies strictly to the right of that
equal-amplitude partner.  The strict increase of $Q_3$ and the preceding
paired estimate give
\begin{equation}\label{eq:normalized-pair}
 Q_3(\widehat r_x)>\abs{Q_3(\widetilde r_x)},\qquad
 0<x\le1.
\end{equation}
\textbf{Step 6: the weighted $Q_3$ mass and its quantitative margin.}
Use the normalized points from Step~5 and set
\[
 U_-(x)=S+(\beta-S)x,\qquad U_+(x)=S-(S-T)x.
\]
The changes of variable $\dd r=\dd u/u'$ give
\begin{align*}
 \int_b^\sigma u^{q-1}\abs{Q_3}\dd r
 &=(\beta-S)\int_0^1
 \frac{U_-(x)^{q-1}\abs{Q_3(\widetilde r_x)}}
 {\abs{u'(\widetilde r_x)}}\dd x,\\
 \int_\sigma^\tau u^{q-1}Q_3\dd r
 &=(S-T)\int_0^1
 \frac{U_+(x)^{q-1}Q_3(\widehat r_x)}
 {\abs{u'(\widehat r_x)}}\dd x.
\end{align*}
For $0<x\le1$, the speed decrease in \eqref{eq:speed-decrease} gives
\[
 \abs{u'(\widetilde r_x)}>
 \abs{u'(\sigma)}>
 \abs{u'(\widehat r_x)}.
\]
Also $U_-(x)\le\beta$ and $U_+(x)\ge T>1$, so
\[
 U_+(x)^{q-1}>\beta^{-(q-1)}U_-(x)^{q-1}.
\]
Together with \eqref{eq:normalized-pair}, these inequalities yield
\begin{align*}
 \int_\sigma^\tau u^{q-1}Q_3\dd r
 >\frac{S-T}{\beta^{q-1}\abs{u'(\sigma)}}
   \int_0^1 U_-(x)^{q-1}\abs{Q_3(\widetilde r_x)}\dd x
 >\frac{S-T}{\beta^{q-1}(\beta-S)}
   \int_b^\sigma u^{q-1}\abs{Q_3}\dd r.
\end{align*}
The integrands are strictly positive for $x>0$, so both integrated
inequalities are strict. By \eqref{eq:weighted-length},
\begin{equation}\label{eq:weighted-Q3-mass}
 \int_\sigma^\tau u^{q-1}Q_3\dd r
 >\kappa^2\int_b^\sigma u^{q-1}\abs{Q_3}\dd r.
\end{equation}

\medskip\noindent
\textbf{Step 7: insertion of the bridge weight.}
Put
\[
 K_A(u):=1-\left(\frac{u}{A}\right)^d.
\]
Since $A\ge \beta>S$, $K_A(S)>0$, and $K_A(u(r))$ increases with $r$ on
$(b,\tau)$.  Using $u'^2=\omega^2u^2/r^2$, the integral in
\eqref{eq:residual-integral} is
\[
 \int_b^\tau u^{q-1}K_A(u)\frac{rQ_3}{\omega^2}\dd r.
\]
On $[b,\sigma]$, one has $Q_3\le0$ and
\[
 \frac{rK_A(u)}{\omega^2}
 \le\frac{\sigma K_A(S)}{\omega(b)^2}
 =\frac{\sigma K_A(S)\kappa^2}{\omega(\tau)^2}.
\]
On $[\sigma,\tau]$, one has $Q_3\ge0$ and
\[
 \frac{rK_A(u)}{\omega^2}
 \ge\frac{\sigma K_A(S)}{\omega(\tau)^2}.
\]
The inequalities are strict away from $\sigma$.  Splitting the integral at
$\sigma$ and respecting the opposite signs of $Q_3$ therefore gives
\begin{align*}
 \int_b^\tau u^{q-1}K_A(u)\frac{rQ_3}{\omega^2}\dd r>\frac{\sigma K_A(S)}{\omega(\tau)^2}
 \left[
 \int_\sigma^\tau u^{q-1}Q_3\dd r
 -\kappa^2\int_b^\sigma u^{q-1}\abs{Q_3}\dd r
 \right]>0
\end{align*}
by \eqref{eq:weighted-Q3-mass}.  This proves \eqref{eq:residual-integral}.
\end{proof}

\begin{corollary}
\label{addcor1}
Assume \eqref{eq:residual-range} and the hypotheses of
Theorem~\ref{thm:firstphase}. \emph{If $b_1<\tau_1$}, then for every
$A\ge\beta$,
\[
 \int_{b_1}^{\tau_1}
 u^{q+1}\left[1-\left(\frac{u}{A}\right)^{p-q}\right]
 \frac{Q_3}{r u'^2}\dd r>0.
\]
If instead $\tau_1\le b_1$, one has the simpler pointwise estimate
\[
 Q_3\ge Q_1>0\quad\text{on }[b_1,c_1].
\]
\end{corollary}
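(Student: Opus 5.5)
The plan is to treat the first phase $[0,c_1]$ as a degenerate instance of a normalized finite phase and rerun Lemma~\ref{lem:residual} there. The only place the induction data \eqref{eq:ind-data} at $c_-$ enter that proof is through the initial signs on $(c_-,\tau)$: $v>0$, $v'<0$, $\rho>0$, and the positivity of $Q,M$. At the origin these are replaced by $v(0)=1$, $v'(0)=0$, $\rho(0)=0$, $Q(0)=M(0)=0$. Steps~1--2 of Theorem~\ref{thm:firstphase} already show $v'<0$ and $\rho'>0$ on $(0,\tau_1)$, hence $\rho>0$ there. Theorem~\ref{thm:firstphase} also gives $Q,M>0$ on $(0,r_1]\supset(0,\tau_1]$ and $Q_1>0$ on $(0,c_1]$.

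In the case $b_1<\tau_1$, I would check the seven steps of Lemma~\ref{lem:residual} one at a time on $(b_1,\tau_1)$, with $c_-$ replaced by $0$.
\begin{enumerate}[label=\textup{(\arabic*)}]
\item For Step~1, $\omega(b_1)>1$ follows from $P(b_1)>0$ (Lemma~\ref{lem:Ppositive}) and $F(\beta)=0$. Monotonicity of $\omega$ follows from $P_1>0$ on $u\ge1$. The bound $a>\omega$ comes from $M>0$, and the computation showing $\mathcal R'<0$ is purely local.
\item Step~2 uses only $Q_3(\tau_1)=Q(\tau_1)>0$, which Theorem~\ref{thm:firstphase} supplies, together with \eqref{eq:aomega}.
\item Step~3 uses $\rho(b_1)>0$, which holds because $\rho(0)=0$ and $\rho'>0$ on $(0,\tau_1)$. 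It also uses Lemma~\ref{lem:inflection}, whose proof explicitly allows the initial point to be the origin, via $u''(0)=-f(\alpha)/n<0$.
\item Steps~4--7 are local computations on $(b_1,\tau_1)$. They need only monotonicity of $r^2\abs{v'}$ (from $(r^2v')'<0$), the parameter bound of Lemma~\ref{lem:residual-parameter}, and the previous steps, so they transfer verbatim.
\end{enumerate}
Together these give the weighted integral inequality for every $A\ge\beta$.

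The main point needing care is Step~3: the contradiction there relies on $\rho(b_1)>0$, and near the origin one has only $\rho(0)=0$. This is rescued by the strict increase of $\rho$, exactly as in the proof of Corollary~\ref{cor:Qn-first}. One should also note that $b_1>0$, so the weights $r$ and $u'^2$ are nonzero throughout $[b_1,\tau_1]$.

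In the case $\tau_1\le b_1$, the pointwise estimate follows as in the first part of the proof of Corollary~\ref{cor:Qn-first}. Theorem~\ref{thm:firstphase} shows that $\tau_1$ is the only zero of $v$ on $[0,c_1]$, with $v<0$ afterwards. Since $u'<0$ on $(0,c_1)$, this gives $u'v\ge0$ on $[b_1,c_1]$. Hence $Q_3=Q_1+2r^{2}u'v\ge Q_1>0$ there, with $Q_1>0$ on $(0,c_1]$ again taken from Theorem~\ref{thm:firstphase}.
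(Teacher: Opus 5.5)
Your proposal is correct and is essentially the paper's own proof: the paper likewise takes $Q,M>0$ on $[b_1,\tau_1]$ from Theorem~\ref{thm:firstphase}, gets $\rho(b_1)>0$ from $\rho(0)=0$ and $\rho'>0$ on $(0,b_1]$, and then notes that the proof of Lemma~\ref{lem:residual} runs unchanged on $[b_1,\tau_1]$. The case $\tau_1\le b_1$ is handled exactly as you do it, via $Q_3=Q_1+2r^2u'v\ge Q_1>0$; your step-by-step check of which initial signs Lemma~\ref{lem:residual} actually uses (including that Lemma~\ref{lem:inflection} allows the origin as initial point) makes explicit what the paper leaves implicit.
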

\begin{proof}
If $\tau_1\le b_1$, Theorem~\ref{thm:firstphase} gives
$v\le0$ on $[b_1,c_1]$ and $u'<0$ on $[b_1,c_1)$; hence
$Q_3=Q_1+2r^2u'v\ge Q_1>0$.
Now assume $b_1<\tau_1$.  Theorem~\ref{thm:firstphase} gives
$Q,M>0$ on $[b_1,\tau_1]$, while
\[
 \rho(0)=0,\qquad
 \rho'=r^{n-1}f''(u)u'^2v>0
 \quad\text{on }(0,b_1].
\]
Thus $\rho(b_1)>0$.  These are the initial signs used in
Lemma~\ref{lem:residual}; its proof applies on the interval $[b_1,\tau_1]$ without any other changes.
\end{proof}

\subsection{Phase renewal}

We now close the induction.  All ingredients except the bridge
factorization are adaptations of the comparison mechanism in
\cite[Section~5]{Tang2026}.  By the reflection invariance recorded
before \eqref{eq:phase-order}, it is enough to work with the normalized
orientation \eqref{eq:phase-order}; the conclusion then transfers
unchanged to every alternating phase.

\begin{lemma}\label{lem:river}
Under \eqref{eq:ind-data}, $v$ has no second zero on $[\tau,c_+]$, and
\begin{equation}\label{eq:Q12all}
 Q_1,Q_2>0\quad\text{on }[c_-,c_+].
\end{equation}
Moreover
\begin{equation}\label{eq:Qbb}
 Q(\bar b)>Q(b)>0,
 \qquad
 Q>0\quad\text{on }[c_-,\max\{\tau,b\}]\cup[\bar b,c_+].
\end{equation}
\end{lemma}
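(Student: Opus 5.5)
The plan follows Steps 3--5 of Theorem~\ref{thm:firstphase}. The initial data are supplied by Lemma~\ref{lem:prep}, and the bridge $B_a$ replaces $B_0$ in the one place where $Q_n$ may fail to be pointwise positive. Write $r_*=\max\{\tau,b\}$. Lemma~\ref{lem:prep} gives a single zero $\tau$ of $v$ on $[c_-,r_*]$, together with $Q,M,Q_1,Q_2>0$ there. After $\tau$ we have $v<0$ and $u'v>0$, so $Q_2>Q_1$ and $Q_n>Q_1$ on $(\tau,c_+)$.

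\textbf{Step 1: the interval $(r_*,\bar b]$.}
On $(r_*,\bar b)$ we have $|u|<\beta$, hence $F(u)<0$. I argue by a first event $s_*$: either $Q_1$ vanishes or $v$ acquires a second zero. A zero of $v$ with $v'>0$ forces $Q_1=r^nu'v'<0$, so $Q_1$ would have vanished first. At a point where $Q_1=0$ and $v<0$, we get
\[
B_0(s_*)=-2s_*^{n-1}\frac{v}{u'}E<0,
\]
using $E>0$ from Lemma~\ref{lem:nodal-geometry}. On the other hand, $B_0'=-2F\,Q_n/(ru'^2)>0$ up to $s_*$, because $Q_n>Q_1>0$ and $F<0$. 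This reduces the step to proving $B_0(r_*)>0$.

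\textbf{Step 2: the sign $B_0(r_*)>0$.}
If $\tau\le b$, then $r_*=b$ and $B_0(b)=Q(b)>0$. If $b<\tau$, then $B_0$ must be carried from $b$ to $\tau$, where $Q_n$ may change sign. For $n\ge4$, or for $n=3$ with $p+q\ge3$, I first run the first-event argument under the assumption $Q_1>0$ and then apply Lemma~\ref{lem:Qn}. This gives $Q_n>0$ on $[b,c_+]$, hence $B_0'\ge0$ on $[b,\tau]$ since $F\le0$ there.

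In the residual range \eqref{eq:residual-range}, I use $B_a$ with $a=g_2(A)$. By Lemma~\ref{lem:factor} and \eqref{eq:Baprime},
\[
B_a(\tau)-B_a(b)=C_A\int_b^\tau u^{q+1}\Bigl[1-\Bigl(\frac uA\Bigr)^{p-q}\Bigr]\frac{Q_3}{ru'^2}\dd r>0
\]
by Lemma~\ref{lem:residual}. I then translate $B_a$ back to $Q$ and $B_0$ at the two endpoints, using $B_a=B_0-a(M-\theta)$ and $\theta=0$ at $\tau$.

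The natural first attempt is $A=\beta$, $a=0$, which makes $B_a=B_0$ and reduces the step to the displayed integral inequality. I will try this first. The alternative $A=A_-$ is kept in reserve for the later comparison at $c_+$. This step is where I expect the main obstacle: ensuring that the integral positivity is applied exactly on $[b,\tau]$, with $B_0(b)=Q(b)$ and $B_0(\tau)=Q(\tau)$. Both identities hold because $F(\beta)=0$ and $v(\tau)=0$. With them, Steps 1 and 2 yield
\[
Q(\bar b)=B_0(\bar b)>B_0(\tau)\ge Q(b)>0,
\]
which also proves the first inequality in \eqref{eq:Qbb}.

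\textbf{Step 3: the interval $(\bar b,c_+]$.}
Here $u<-\beta$, so $f(u)<0$. A further zero $\sigma$ of $v$ would make $Q'=2r^{n-1}fv>0$ on $(\bar b,\sigma)$, while $Q(\sigma)=\sigma^nu'v'<0$. This contradicts $Q(\bar b)>0$. Hence $v<0$ up to $c_+$, and $Q_2>Q_1>Q>0$ on $(\bar b,c_+]$.

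Combining Steps 1--3 with Lemma~\ref{lem:prep} gives \eqref{eq:Q12all} and the positivity set of $Q$ in \eqref{eq:Qbb}.
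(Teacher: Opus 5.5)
Your Steps~1 and~3 match the paper's argument: the one-sided $B_0$ bridge on $(\max\{\tau,b\},\bar b]$, where $u'v>0$ makes $Q_n>Q_1>0$ automatic, followed by the $Q'=2r^{n-1}f(u)v$ argument on $(\bar b,c_+]$. The problem is Step~2.

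\textbf{The circularity.} For $n\ge4$, or $n=3$ with $p+q\ge3$, you invoke Lemma~\ref{lem:Qn}. Its hypothesis is $Q_1>0$ on all of $[c_-,c_+]$, which is exactly a conclusion of the lemma you are proving. Your Step~1, which is supposed to deliver $Q_1>0$, needs $B_0(\max\{\tau,b\})>0$ as input. As written, the argument is therefore circular. It could be rescued by noting that the largest-zero argument inside Lemma~\ref{lem:Qn} only needs $Q_n(\tau)=Q(\tau)>0$, but you do not say this.

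\textbf{The residual range.} Here the appeal to Lemma~\ref{lem:residual} with $A=\beta$ is logically admissible, since that lemma uses only \eqref{eq:ind-data} and Lemma~\ref{lem:prep}. However, it spends the hardest estimate in the paper on something trivial. Moreover, Step~2 of that lemma's own proof already uses $Q(\tau)>0$.

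\textbf{What you missed.} Step~2 is not needed at all. When $b<\tau$, one has $\max\{\tau,b\}=\tau$, and $B_0(\tau)=Q(\tau)$ because $v(\tau)=0$. The positivity $Q(\tau)>0$ is already part of \eqref{eq:Qfamily-prep}, which you quoted at the outset.

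For the comparison $Q(\bar b)>Q(b)$, note that on $[b,\tau)$ one has $u>1$ (since $\tau$ precedes the level $u=1$) and $v>0$. Hence \eqref{eq:Qprime} gives $Q'>0$ there, and
\[
 Q(b)<Q(\tau)=B_0(\tau)<B_0(\bar b)=Q(\bar b).
\]
This is the paper's proof, and it is uniform in $n$ and $(p,q)$.

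In this lemma no bridge ever has to cross the interval $[b,\tau)$ where $Q_n$ may be negative. That crossing, and with it $B_a$ and Lemma~\ref{lem:residual}, is needed only later, in Lemma~\ref{lem:renewal}.
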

\begin{proof}
Put $s_0:=\max\{\tau,b\}\geq b$.  Lemma~\ref{lem:prep} gives $Q,Q_1,Q_2>0$ on $[c_-,s_0]$ and
excludes a second zero of $v$ before $s_0$.
The simultaneous continuation argument in Steps~3--4 of
Theorem~\ref{thm:firstphase} applies on $(s_0,\bar b]$.
Indeed, its initial and interval hypotheses are
\[
 B_0(s_0)=Q(s_0)>0,\qquad
 F(u)<0,\qquad E(r)>E(c_+)>0.
\]
As long as $v<0$ and $Q_1>0$, we have $Q_n>Q_1>0$ and
$B_0'>0$. At a second zero of $v$, $Q_1=r^nu'v'<0$;
at a first zero of $Q_1$ with $v<0$,
$B_0=-2r^{n-1}(v/u')E<0$. Both alternatives contradict the
preceding signs. Therefore
\begin{equation}\label{add4.71}
  v<0,\qquad Q_2>Q_1>0
 \quad\text{on }(s_0,\bar b].
\end{equation}
Moreover,
\begin{equation}\label{add4.72}
  Q(\bar b)=B_0(\bar b)>B_0(s_0)>0.
\end{equation}

On $(\bar b,c_+]$, a further zero $\sigma$ of $v$ would force $Q'>0$ on $(\bar b,\sigma)$ and $Q(\sigma)<0<Q(\bar b)$, which is impossible.  Hence $v<0$ on $(\bar b,c_+]$, $Q_2(c_+)=Q_1(c_+)=Q(c_+)>0$, and $Q_2>Q_1>Q>0$ on $(\bar b,c_+)$.
This together with \eqref{eq:Qfamily-prep}, \eqref{add4.71} and \eqref{add4.72}, proves
\eqref{eq:Q12all} and the last assertion in \eqref{eq:Qbb}.

It remains to compare $Q(b)$ and $Q(\bar b)$.  If $\tau\le b$, then
$Q_n>Q_1>0$ on $(b,\bar b)$, so \eqref{eq:B0prime} and $F<0$ give $Q(\bar b)=B_0(\bar b)>B_0(b)=Q(b)>0$.
If $b<\tau$, then $f(u)>0$ and $v>0$ on $[b,\tau)$, hence $Q(\tau)>Q(b)>0$.
On $(\tau,\bar b)$, $Q_n>Q_1>0$, and the same bridge argument yields $Q(\bar b)=B_0(\bar b)>B_0(\tau)=Q(\tau)$.
Thus in both cases $Q(\bar b)>Q(b)>0$.
\end{proof}

\begin{lemma}\label{lem:phi}
Assume \eqref{eq:ind-data}.
For $\mu\in[\beta,A_+)$, let $r^-_\mu\in(c_-,b]$ and
$r^+_\mu\in[\bar b,c_+)$ be defined by
\[
 u(r^-_\mu)=\mu,
 \qquad
 u(r^+_\mu)=-\mu.
\]
Set
\[
 \Phi(r):=\frac{Q(r)}{r^{n-1}\abs{u'(r)}}.
\]
Then
\begin{equation}\label{eq:Phi-comp}
 \Phi(r^+_\mu)>\Phi(r^-_\mu)>0
 \quad\text{for all }\mu\in[\beta,A_+).
\end{equation}
\end{lemma}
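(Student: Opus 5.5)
The plan is to prove positivity at $r^-_\mu$ from the sign record already established, and the comparison by a bridge argument at the level $\mu$ followed by a conversion step.

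\textbf{Positivity at $r^-_\mu$.} Since $\mu\ge\beta$ and $u$ decreases on $(c_-,c_+)$, we have $r^-_\mu\le b\le\max\{\tau,b\}$. Lemma~\ref{lem:river}, via \eqref{eq:Qbb}, gives $Q>0$ on $[c_-,\max\{\tau,b\}]$. Because $u'(r^-_\mu)<0$, this yields $\Phi(r^-_\mu)>0$.

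\textbf{Bridge at level $\mu$.} For the comparison, fix $\mu\in[\beta,A_+)$ and use the bridge functional $B_a$ with $A=\mu$ and $a=g_2(\mu)$. By Lemma~\ref{lem:factor}, $F_a(\pm\mu)=0$. Hence at both endpoints
\[
B_a(r^\pm_\mu)=Q(r^\pm_\mu)-aM(r^\pm_\mu),
\]
and $-2F_a(u)=C_\mu\abs{u}^{q+1}\bigl[1-(\abs{u}/\mu)^{p-q}\bigr]\ge0$ on $(r^-_\mu,r^+_\mu)$, where $\abs{u}<\mu$. Identity \eqref{eq:Baprime} then gives
\[
B_a(r^+_\mu)-B_a(r^-_\mu)=\int_{r^-_\mu}^{r^+_\mu}(-2F_a(u))\frac{Q_n}{ru'^2}\dd r .
\]
The integral is split according to the two regimes.
\begin{itemize}
\item If $n\ge4$, or $n=3$ with $p+q\ge3$, or $\tau\le b$: Lemma~\ref{lem:Qn} (with $Q_1>0$ from \eqref{eq:Q12all}) gives $Q_n>0$ on $[b,c_+]$. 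On $[r^-_\mu,b]$ we have $u\ge\beta$, so $Q_n>0$ there by a separate treatment, or the factor $-2F_a$ together with $F(u)\ge0$ handles that piece through $B_0$.
\item In the residual range $n=3$, $p+q<3$ with $b<\tau$: the only possibly negative part of $Q_3$ lies in $[b,\tau)$. Lemma~\ref{lem:residual} with $A=\mu$ shows that this part, with precisely the weight $-2F_a/C_\mu$, has positive integral. On $[\tau,c_+]$, $Q_3\ge Q_1>0$.
\end{itemize}
In either case this gives $B_a(r^+_\mu)>B_a(r^-_\mu)$.

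\textbf{Conversion to $\Phi$ (expected main obstacle).} The bridge inequality compares $Q-aM$, whereas \eqref{eq:Phi-comp} compares $Q$ divided by the flux $r^{n-1}\abs{u'}$, and the flux is not monotone across the phase. I would proceed as follows.
\begin{enumerate}
\item First try to choose the constant in the bridge so that the endpoint terms become exactly $\Phi$. Concretely, divide by the flux before comparing, using identity \eqref{eq:Qratio}:
\[
\Bigl(\frac{Q}{r^{n-1}u'}\Bigr)'=\frac{f(u)Q_2}{r^{n-1}u'^2},\qquad Q_2>0 .
\]
This says that $\Phi$ decreases where $f(u)>0$ and increases where $f(u)<0$.
\item Combine that monotonicity with the $T_2$ relation \eqref{eq:T2B0}, and with $T_2=Q-g_2(u)M$ and $g_2(\pm\mu)=a$. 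These identify $B_a$ with $T_2$ at both endpoints, and I would express $M/(r^{n-1}\abs{u'})$ at $r^\pm_\mu$ through $\theta$ and \eqref{eq:connection}.
\item Use the energy comparison at equal amplitude: $E(r^+_\mu)<E(r^-_\mu)$ gives $\abs{u'(r^+_\mu)}<\abs{u'(r^-_\mu)}$. Together with $\omega'>0$ on $\abs{u}\ge1$ (Lemma~\ref{lem:Ppositive}), this controls the flux ratio.
\end{enumerate}
If this direct conversion fails, the fallback is to run the bridge argument on the quotient $B_a/(r^{n-1}\abs{u'})$ piece by piece. The pieces are $[r^-_\mu,r]$, $[r,\bar r]$ and $[\bar r,r^+_\mu]$, using the sign of $f$ on each, and the aim is to check that the monotone pieces telescope to the strict inequality $\Phi(r^+_\mu)>\Phi(r^-_\mu)$.
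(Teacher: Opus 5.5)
Your positivity step for $\Phi(r^-_\mu)$ is correct and matches the paper. The comparison $\Phi(r^+_\mu)>\Phi(r^-_\mu)$, however, is not established, for three reasons.

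\textbf{The bridge step is unproved and circular.} On $[r^-_\mu,b]\cap(c_-,\tau)$ one has $u'v<0$, so $Q_n=Q+nr^{n-1}u'v$ may be negative there. Lemma~\ref{lem:Qn} does not control this piece, since it only covers $[b,c_+]$, and neither does the vague appeal to $B_0$. In the paper this high-amplitude contribution is handled only inside Lemma~\ref{lem:renewal}, by pairing $[t^-,b]$ with $[\bar b,t^+]$. That pairing needs two inputs. The first is $\mathcal S(r^+_\mu)>\mathcal S(r^-_\mu)$, which is deduced from \eqref{eq:Phi-comp} itself. The second is $M>\theta$ on the postzero side, which is only available on $(\bar b,t^+)$, where $T_2'<0$, and not for all $\mu<A_+$.

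\textbf{Even the bridge inequality would not give the statement.} Granting $B_a(r^+_\mu)>B_a(r^-_\mu)$, you only obtain $T_2(r^+_\mu)>T_2(r^-_\mu)$. The difference of the $M$-terms has no known sign. The flux $r^{n-1}\abs{u'}$ at a level $\mu>\beta$ is not ordered by energy alone: the speed is smaller at $r^+_\mu$, but the radius is larger.

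\textbf{The fallback does not telescope.} By \eqref{eq:Qratio}, $\Phi$ decreases where $f(u)>0$ and increases where $f(u)<0$. Both signs occur across the phase, so the monotone pieces do not combine into a one-signed difference.

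\textbf{The missing idea.} Prove the inequality first at $\mu=\beta$, then propagate it in the amplitude variable rather than in $r$.

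At $\mu=\beta$, the function $r^{2(n-1)}E(r)$ has derivative $2(n-1)r^{2n-3}F(u)<0$ on $(b,\bar b)$, and $F(\pm\beta)=0$. Hence $\bar b^{\,n-1}\abs{u'(\bar b)}<b^{n-1}\abs{u'(b)}$. Together with $Q(\bar b)>Q(b)>0$ from \eqref{eq:Qbb}, this gives $D(\beta)>0$, where $D(\mu):=\Phi(r^+_\mu)-\Phi(r^-_\mu)$.

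Next, \eqref{eq:Qratio} and the oddness of $f$ show that both branches, parametrized by $s=\abs{u}$, obey the same law $\frac{\dd}{\dd s}\Phi(r^\pm_s)=f(s)\Phi_2(r^\pm_s)$, with $\Phi_2:=Q_2/(r^{n-1}\abs{u'}^3)$.

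One then checks that $D(\mu)>0$ implies $\Phi_2(r^+_\mu)>\Phi_2(r^-_\mu)$. The ingredients are:
\begin{enumerate}
\item the speed comparison $\abs{u'(r^+_\mu)}<\abs{u'(r^-_\mu)}$;
\item the comparison of $Q_2$ with $Q$ according to the sign of $u'v$;
\item when $r^-_\mu>\tau$, the ordering of $v/u'$ between $r^-_\mu$ and $r^+_\mu$, obtained by integrating \eqref{eq:vuratio} with $Q_1>0$.
\end{enumerate}

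Since $f>0$ on $(\beta,\infty)$, a first-contact argument applied to $D(\mu_*)=D(\beta)+\int_\beta^{\mu_*}f(s)\bigl(\Phi_2(r^+_s)-\Phi_2(r^-_s)\bigr)\dd s$ concludes. Your equal-amplitude speed comparison is one ingredient of this argument. What the proposal lacks is the base case at $\beta$ and the amplitude-parametrized comparison of $\Phi$ and $\Phi_2$.
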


\begin{proof}
Let
\[
 \widehat E(r):=r^{2(n-1)}E(r).
\]
Using \eqref{eq:Eprime},
\begin{equation*}
 \widehat E'(r)=2(n-1)r^{2n-3}F(u(r)).
\end{equation*}
Since $F<0$ between $b$ and $\bar b$, and $F(\pm\beta)=0$,
\begin{equation*}
 (r^+_\beta)^{n-1}\abs{u'(r^+_\beta)}
 <(r^-_\beta)^{n-1}\abs{u'(r^-_\beta)}.
\end{equation*}
Together with $Q(\bar b)>Q(b)>0$ from Lemma~\ref{lem:river}, this proves \eqref{eq:Phi-comp} at $\mu=\beta$.

For $\mu\in[\beta,A_+)$, define
\[
 D(\mu):=\Phi(r^+_\mu)-\Phi(r^-_\mu)
\]
and set
\[
 \Phi_2(r):=\frac{Q_2(r)}{r^{n-1}\abs{u'(r)}^3},
 \qquad
 D_2(\mu):=\Phi_2(r^+_\mu)-\Phi_2(r^-_\mu).
\]
We have already proved that $D(\beta)>0$.  We first show that
\begin{equation}\label{eq:Phi2-comp}
 D(\mu)>0\quad\Longrightarrow\quad D_2(\mu)>0.
\end{equation}
At equal amplitudes, strict energy decrease gives
\begin{equation}\label{eq:speed-pair}
 \abs{u'(r^+_\mu)}<\abs{u'(r^-_\mu)}.
\end{equation}
If $r^-_\mu\le\tau$, then $u'v\le0$ at $r^-_\mu$, while $u'v>0$ at
$r^+_\mu$.  Hence
\[
 Q_2(r^-_\mu)\le Q(r^-_\mu),
 \qquad
 Q_2(r^+_\mu)>Q(r^+_\mu),
\]
and
$D(\mu)>0$,
\eqref{eq:speed-pair} give the explicit chain
\begin{align*}
 \Phi_2(r^+_\mu)
 &>\frac{Q(r^+_\mu)}{(r^+_\mu)^{n-1}
 \abs{u'(r^+_\mu)}^3}
 =\frac{\Phi(r^+_\mu)}{\abs{u'(r^+_\mu)}^2}\\
 &>\frac{\Phi(r^-_\mu)}{\abs{u'(r^-_\mu)}^2}
 =\frac{Q(r^-_\mu)}{(r^-_\mu)^{n-1}
 \abs{u'(r^-_\mu)}^3}
 \ge\Phi_2(r^-_\mu).
\end{align*}
Thus $D_2(\mu)>0$ in this case.

Assume next that $r^-_\mu>\tau$.  Then $u'v>0$ at $r^-_\mu$ and $r^+_\mu$.
Since $Q_1>0$ on $[c_-,c_+]$ by Lemma~\ref{lem:river}, integration of \eqref{eq:vuratio} from
$r^-_\mu$ to $r^+_\mu$ gives
\begin{equation}\label{eq:vu-pair}
 \frac{v(r^+_\mu)}{u'(r^+_\mu)}
 >\frac{v(r^-_\mu)}{u'(r^-_\mu)}>0.
\end{equation}
At every point where $u'v>0$, one has
\[
 \Phi_2
 =\frac{\Phi}{\abs{u'}^2}
 +\frac{2}{\abs{u'}}\frac{v}{u'}.
\]
The first summand at $r^+_\mu$ is larger than the first summand at
$r^-_\mu$, because $D(\mu)>0$ and
\eqref{eq:speed-pair} hold.  The second summand is also larger, by
 \eqref{eq:speed-pair} and \eqref{eq:vu-pair}.  Hence
$D_2(\mu)>0$.  This proves
\eqref{eq:Phi2-comp}.

Suppose now that \eqref{eq:Phi-comp} fails.  By continuity, there is a first
contact $\mu_*\in(\beta,A_+)$ such that
\[
 D(\mu)>0\quad \text{on }(\beta,\mu_*),
 \qquad D(\mu_*)=0.
\]
On the prezero branch, parametrized by $s=u>0$, identity
\eqref{eq:Qratio} gives
\[
 \frac{\dd}{\dd s}\Phi(r^-_s)=f(s)\Phi_2(r^-_s).
\]
On the postzero branch, parametrized by $s=\abs{u}=-u>0$, the oddness of $f$ gives the same formula
\[
 \frac{\dd}{\dd s}\Phi(r^+_s)=f(s)\Phi_2(r^+_s).
\]
Consequently
\begin{equation}\label{eq:Phi-integral}
 D(\mu_*)=D(\beta)+\int_\beta^{\mu_*}f(s)D_2(s)\dd s.
\end{equation}
For $\beta<s<\mu_*$, one has $f(s)>0$, $D(s)>0$, and hence
$D_2(s)>0$ by \eqref{eq:Phi2-comp}.  The right-hand side of
\eqref{eq:Phi-integral} is therefore strictly positive.  This contradicts
$D(\mu_*)=0$, thus $\Phi(r^+_\mu)>\Phi(r^-_\mu)$ for all $\mu\in[\beta,A_+)$.
Finally, it follows from Lemma~\ref{lem:river} that $\Phi(r^-_\mu)>0$, and
proves \eqref{eq:Phi-comp}.
\end{proof}

\begin{lemma}\label{lem:renewal}
Assume \eqref{eq:ind-data}.
Then $T_2(c_+)>0$.
\end{lemma}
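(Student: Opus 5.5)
The plan is to realize $T_2(c_+)$ as the terminal value of the bridge functional $B_a$ with the level adapted to the \emph{terminal} amplitude, namely $A=A_+$ and $a=g_2(A_+)$, and then to propagate positivity of $B_a$ across the phase using \eqref{eq:Baprime}.

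\textbf{Step 1 (endpoint identifications).} At $c_+$ we have $u'(c_+)=0$ and $u(c_+)=-A_+$, and Lemma~\ref{lem:factor} gives $F_a(-A_+)=0$. Since $f(-A_+)\neq0$, the zero $u+A_+$ is of order $(r-c_+)^2$ while $u'$ vanishes to first order, so $F_a(u)\,r^{n-1}v/u'\to0$. Hence
\[
B_a(c_+)=Q(c_+)-g_2(A_+)M(c_+)=T_2(c_+).
\]
At the prezero point $r^-_{A_+}\in(c_-,b]$, where $u=A_+$, the same vanishing together with $g_2(u)=a$ gives $B_a(r^-_{A_+})=T_2(r^-_{A_+})$. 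This value is positive by \eqref{eq:T2-prep}.

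\textbf{Step 2 (monotonicity on $[b,c_+]$).} By Lemma~\ref{lem:factor}, $-2F_a(u)=C_{A_+}\abs{u}^{q+1}K_{A_+}(\abs{u})>0$ for $\abs{u}<A_+$. Then \eqref{eq:Baprime} reads
\[
B_a'=C_{A_+}\abs{u}^{q+1}\Bigl[1-\bigl(\abs{u}/A_+\bigr)^{p-q}\Bigr]\frac{Q_n}{r u'^2}.
\]
In the generic parameter range I would invoke Lemma~\ref{lem:Qn}, which applies since $Q_1>0$ on $[c_-,c_+]$ by Lemma~\ref{lem:river}, to get $Q_n>0$ on $[b,c_+]$. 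Thus $B_a(c_+)>B_a(b)$.

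In the residual range \eqref{eq:residual-range} I split $[b,c_+]$ at $\tau$. On $[\tau,c_+]$ one has $Q_3\ge Q_1>0$. On $[b,\tau]$, Lemma~\ref{lem:residual} with $A=A_+\ge\beta$ gives exactly $\int_b^\tau B_a'\,\dd r>0$. This is why the weight $u^{q+1}K_A$ was built into \eqref{eq:residual-integral}. So in all cases $T_2(c_+)=B_a(c_+)>B_a(b)$.

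\textbf{Step 3 (the prezero segment $[r^-_{A_+},b]$; main obstacle).} It remains to show $B_a(b)>0$, or more precisely that the full increment $B_a(c_+)-B_a(r^-_{A_+})$ is nonnegative. The difficulty is that on $(r^-_{A_+},b)$ the sign of $Q_n$ is not controlled, since $u'v<0$ before $\tau$. Moreover, the identity
\[
B_a-T_2=(g_2(u)-a)(M-\theta)=-\frac{(g_2(u)-a)\,T_2'}{g_2'(u)u'}
\]
combined with $T_2'>0$ shows $B_a<T_2$ there, so positivity of $T_2$ alone is insufficient.

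My first attempt is an equal-amplitude pairing of $(r^-_{A_+},b)$ with the postzero segment $(\bar b,c_+)$, parametrized by $s=\abs{u}\in(\beta,A_+)$. Since $\dd r=\dd s/u'$, the contribution to $B_a$ from each branch is the integral over $s$ of
\[
C_{A_+}s^{q+1}K_{A_+}(s)\,\frac{Q_n}{r\abs{u'}^3}.
\]
I would write $Q_n=Q_2+(n-2)r^{n-1}u'v$ and use three inputs. First, Lemma~\ref{lem:phi} and its by-product \eqref{eq:Phi2-comp}, namely $\Phi_2(r^+_s)>\Phi_2(r^-_s)$, dominate the $Q_2$ part: $Q_2/(r\abs{u'}^3)=\Phi_2\, r^{n-2}$ and $r^+_s>r^-_s$. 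Second, the term $r^{n-2}v/u'$ is positive on the postzero branch and, on the prezero branch, is either negative (before $\tau$) or smaller by \eqref{eq:vu-pair}. Third, the speed ordering \eqref{eq:speed-pair} handles the factors of $\abs{u'}$. If this works, the postzero contribution dominates the (possibly negative) prezero contribution pointwise in $s$. Then
\[
T_2(c_+)-T_2(r^-_{A_+})=\int_{r^-_{A_+}}^{c_+}B_a'\,\dd r>0,
\]
and the renewal $T_2(c_+)>0$ follows. The point I expect to need the most care is the factor $r^{n-2}$, which favors the postzero side, together with the reflection conventions making $\Phi_2$ comparable on both branches.
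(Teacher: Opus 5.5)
Your Steps~1 and~2 are sound. Near $c_+$, $u+A_+$ vanishes quadratically, so $F_a(u)\,r^{n-1}v/u'\to0$ and $B_a(c_+)=T_2(c_+)$. At the other end, $B_a(r^-_{A_+})=T_2(r^-_{A_+})>0$ by \eqref{eq:T2-prep}. The middle interval is handled as in the paper, by Lemma~\ref{lem:Qn} or by Lemma~\ref{lem:residual} with $A=A_+$.

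The gap is Step~3. After the change of variables both branches enter with the \emph{same} sign, so you need $\Psi(s)>0$ for every $s\in(\beta,A_+)$. Whenever $Q_n(r^-_s)<0$ (which forces $r^-_s<\tau$), the postzero term must exceed the \emph{absolute value} of the prezero term.

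Your three inputs do not give this. They are $\Phi_2(r^+_s)>\Phi_2(r^-_s)$, the sign and ordering of $v/u'$, and $\abs{u'(r^+_s)}<\abs{u'(r^-_s)}$. Each is a comparison of the form ``postzero $>$ prezero'' between positive quantities. None bounds the negative piece $(n-2)(r^-_s)^{n-2}\,v/(u'\abs{u'})$ at $r^-_s$.

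To bound that piece you need an upper bound on $v(r^-_s)$ and a matching lower bound on $\abs{v(r^+_s)}$. The only available source is the connection identity \eqref{eq:connection}. On the prezero side, $\theta<0<M$ and $\omega>0$ give $v(r^-_s)<s\,Q(r^-_s)/P(r^-_s)$. On the postzero side, $\omega<0$, so $\abs{v(r^+_s)}>s\,Q(r^+_s)/P(r^+_s)$ requires $M>\theta$. By \eqref{eq:T2prime}, where $g_2'(u)u'>0$, this is equivalent to $T_2'<0$. Since $T_2'(c_+)>0$, this fails near $c_+$: there $M<\theta$ and the inequality reverses. So with the level fixed at $A=A_+$, nothing in your argument controls $\Psi$ on the upper part of the amplitude range.

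The missing idea is to stop before that region. At any zero $t\in(\bar b,c_+)$ of $T_2'$, differentiating \eqref{eq:T2B0} and using \eqref{eq:B0prime} gives
\[
T_2''(t)=h_1(u)\,g_2'(u)\,u'\,\frac{Q_n}{t\,u'^2}>0 .
\]
Hence $T_2'$ has at most one zero $t^+$ there. If it has none, $T_2(c_+)>T_2(\bar b)=Q(\bar b)>0$.

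Otherwise the paper takes the level $A=\abs{u(t^+)}$ rather than $A_+$, and integrates $B_a'$ only over $[t^-,t^+]$, where $B_a=T_2$ at both ends. On $(\bar b,t^+)$ the sign $T_2'<0$ supplies exactly the missing lower bound on $\abs{v(r^+_\mu)}$. Combined with $\mathcal S(r^+_\mu)>\mathcal S(r^-_\mu)$, this bounds the postzero term below by $n\mu\,\mathcal S(r^+_\mu)$ and the negative prezero term above by $(n-2)\mu\,\mathcal S(r^-_\mu)$, so $\Psi>0$ on $(\beta,A)$.

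This gives $T_2(t^+)>T_2(t^-)>0$. Since $t^+$ is the minimum of $T_2$ on $[\bar b,c_+]$, it follows that $T_2(c_+)>T_2(t^+)>0$.
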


\begin{proof}
On the postzero high-amplitude interval $[\bar b,c_+)$,
$u'v>0$, so $Q_n>Q_1>0$.
At $c_+$, one has $Q_n=Q_1=Q>0$.
At $\bar b$, $g_2(-\beta)=0$, hence $T_2(\bar b)=Q(\bar b)>0$.  Since
\[
 u(c_+)<-\beta,\qquad v(c_+)<0,\qquad g_2'(u(c_+))<0,\qquad h_1(u(c_+))>0,
\]
by \eqref{eq:T2prime-regular},
\[
 T_2'(c_+)
 =c_+^{n-1}g_2'(u(c_+))h_1(u(c_+))v(c_+)>0.
\]
Thus $T_2'>0$ on a left neighborhood of $c_+$.

If $t\in(\bar b,c_+)$ satisfies $T_2'(t)=0$, then $u'(t)\ne0$.
Differentiating \eqref{eq:T2B0}, using \eqref{eq:B0prime} and
$T_2'(t)=0$, first gives
\[
 0=B_0'(t)+\frac{g_2(u(t))}{g_2'(u(t))u'(t)}T_2''(t).
\]
Since $2F=g_2h_1$, this becomes
\begin{equation*}
 T_2''(t)=h_1(u(t))g_2'(u(t))u'(t)
 \frac{Q_n(t)}{t u'(t)^2}>0.
\end{equation*}
Indeed, $h_1(u(t))>0$ and $Q_n(t)>0$, while both $g_2'(u(t))$ and
$u'(t)$ are negative on this branch.  Hence every zero of $T_2'$ is
simple, and $T_2'$ crosses there from negative to positive.  There can be
at most one such zero.  Otherwise, after the first zero $t_1$, the derivative
$T_2'$ is positive.  If $t_2>t_1$ were the next zero, then $T_2'>0$
immediately to the left of $t_2$, whereas $T_2''(t_2)>0$ would force
$T_2'<0$ immediately to its left.  This is a contradiction.  If $T_2'$
has no zero in $(\bar b,c_+)$, its sign is constant there.  Its positivity
near $c_+$ implies $T_2'>0$ throughout this interval.  Hence
$T_2(c_+)>T_2(\bar b)>0$.

Suppose therefore that the unique critical point exists and denote it by
$t^+$.  It lies strictly between $\bar b$ and $c_+$.  The crossing
orientation gives
\begin{equation}\label{eq:T2postsign}
 T_2'<0\quad\text{on }(\bar b,t^+),
 \qquad T_2'(t^+)=0,\qquad
 T_2'>0\quad\text{on }(t^+,c_+).
\end{equation}
Put
\[
 A:=\abs{u(t^+)}\in(\beta,A_+),
\]
and let $t^-\in(c_-,b)$ be the unique point with $u(t^-)=A$.  This point
exists because $A<A_+<A_-$ and $u$ is strictly decreasing.  Lemma~\ref{lem:prep}
gives $T_2(t^-)>0$.

Set $a=g_2(A)$ and
\begin{equation}\label{add4.92}
 H_A(s):=-2F_a(s)
 =C_A\abs{s}^{q+1}\left[1-\left(\frac{\abs{s}}{A}\right)^d\right]
 \begin{cases}
 =0,\qquad&\abs{s}=0, A,\\
 >0,\qquad&0<\abs{s}<A,
 \end{cases}
\end{equation}
where $C_A>0$ by Lemma~\ref{lem:factor}.
Since $F_a(\pm A)=0$, integration of \eqref{eq:Baprime} gives
\begin{equation}\label{eq:T2integral}
 T_2(t^+)-T_2(t^-)
 =\int_{t^-}^{t^+}H_A(u(r))\frac{Q_n(r)}{r u'(r)^2}\dd r.
\end{equation}
We show that the integral is strictly positive.

First pair the high-amplitude pieces $[t^-,b]$ and $[\bar b,t^+]$.  For $\mu\in[\beta,A]$, let $r^-_\mu$ and $r^+_\mu$ be the equal-amplitude points from Lemma~\ref{lem:phi} and set
\[
 \Psi(\mu):=
 \frac{Q_n(r^+_\mu)}{r^+_\mu\abs{u'(r^+_\mu)}^3}
 +\frac{Q_n(r^-_\mu)}{r^-_\mu\abs{u'(r^-_\mu)}^3}.
\]
We claim that
\begin{equation}\label{eq:Psi-positive}
 \Psi(\mu)>0\quad\text{for all }\mu\in(\beta,A).
\end{equation}
If $Q_n(r^-_\mu)\ge0$, this is immediate because $Q_n(r^+_\mu)>0$.  Assume $Q_n(r^-_\mu)<0$.  Then $r^-_\mu<\tau$.  On the prezero side, one has $\theta<0<M$, so the connection identity \eqref{eq:connection} and $uv>0$, $\omega>0$, $P,Q>0$ imply
\begin{equation}\label{eq:connection-pre-pair}
{v(r^-_\mu)}
 <\frac{Q(r^-_\mu)}{P(r^-_\mu)}\,\mu.
\end{equation}
On the postzero interval $(\bar b,t^+)$, \eqref{eq:T2postsign} gives $T_2'<0$.
Since $g_2'(u)u'>0$, formula \eqref{eq:T2prime} gives $M>\theta$.  There $\omega<0$; hence \eqref{eq:connection} gives
\begin{equation}\label{eq:connection-pre-pair'}
 \abs{v(r^+_\mu)}
 >\frac{Q(r^+_\mu)}{P(r^+_\mu)}\,\mu,
 \qquad \beta<\mu< A.
\end{equation}
Moreover, $Q_2(r^-_\mu)>0$ gives
\begin{equation}\label{add4.91}
 -Q_n(r^-_\mu)
 <(n-2)(r^-_\mu)^{n-1}\abs{u'(r^-_\mu)v(r^-_\mu)},
\end{equation}
whereas $u'v>0$ and $Q>0$ on the postzero side give
\begin{equation}\label{eq:Qn-post-bound}
 Q_n(r^+_\mu)
 >n(r^+_\mu)^{n-1}{u'(r^+_\mu)v(r^+_\mu)}.
\end{equation}

It remains to compare the coefficients in these two estimates.  Define
\[
 \mathcal S(r):=\frac{r^{n-2}Q(r)}{P(r)u'(r)^2}
 =\Phi(r)\,
 \frac{r^{n-3}}{\abs{u'(r)}}\,
 \frac{r^n}{P(r)}.
\]
At equal amplitudes, Lemma~\ref{lem:phi} gives $\Phi(r^+_\mu)>\Phi(r^-_\mu)>0$; the energy decrease gives $\abs{u'(r^+_\mu)}<\abs{u'(r^-_\mu)}$; $P>0$ and $P/r^n$ is decreasing by Lemma~\ref{lem:Ppositive}.  Since $r^+_\mu>r^-_\mu$ and $n\ge3$,
\begin{equation}\label{eq:S-pair}
 \mathcal S(r^+_\mu)>\mathcal S(r^-_\mu)>0.
\end{equation}
Combining \eqref{eq:connection-pre-pair}--\eqref{eq:Qn-post-bound} with \eqref{eq:S-pair} yields
\begin{align*}
 \frac{Q_n(r^+_\mu)}{r^+_\mu\abs{u'(r^+_\mu)}^3}
 >n\mu\,\mathcal S(r^+_\mu)
 >(n-2)\mu\,\mathcal S(r^-_\mu)
 >\frac{-Q_n(r^-_\mu)}{r^-_\mu\abs{u'(r^-_\mu)}^3},
\end{align*}
which proves \eqref{eq:Psi-positive}.

On $[t^-,b]$, the variable $\mu=u$ decreases from $A$ to $\beta$ and
$\dd r=-\dd\mu/\abs{u'}$.  On $[\bar b,t^+]$, the variable
$\mu=-u$ increases from $\beta$ to $A$ and
$\dd r=\dd\mu/\abs{u'}$.
By \eqref{add4.92} and \eqref{eq:Psi-positive}, using the evenness of $H_A$, we obtain
\begin{align}\label{eq:high-pair-positive}
 &\int_{t^-}^{b}H_A(u)\frac{Q_n}{r u'^2}\dd r
 +\int_{\bar b}^{t^+}H_A(u)\frac{Q_n}{r u'^2}\dd r
 \nonumber\\
 =&\int_A^\beta
 H_A(\mu)\frac{Q_n(r^-_\mu)}{r^-_\mu\abs{u'(r^-_\mu)}^2}
 \left(-\frac{\dd\mu}{\abs{u'(r^-_\mu)}}\right)+\int^A_\beta
 H_A(\mu)\frac{Q_n(r^+_\mu)}{r^+_\mu\abs{u'(r^+_\mu)}^2}
 \left(\frac{\dd\mu}{\abs{u'(r^+_\mu)}}\right)\nonumber
 \\
 =&\int_\beta^A H_A(\mu)
 \frac{Q_n(r^-_\mu)}{r^-_\mu\abs{u'(r^-_\mu)}^3}\dd\mu+\int_\beta^A H_A(\mu)
 \frac{Q_n(r^+_\mu)}{r^+_\mu\abs{u'(r^+_\mu)}^3}\dd\mu\nonumber\\=&
 \int_\beta^A H_A(\mu)\Psi(\mu)\dd\mu>0.
\end{align}

It remains to treat the middle interval $[b,\bar b]$.  If either $n\ge4$, or $n=3$ and $p+q\ge3$, Lemma~\ref{lem:Qn} gives $Q_n>0$ there, and hence the middle contribution is positive.  Suppose instead that $n=3$ and $p+q<3$.  If $\tau\le b$, then $Q_3\ge Q_1>0$ on $[b,\bar b]$.  If $b<\tau$, $Q_3>Q_1>0$ on $(\tau,\bar b]$, and
Lemma~\ref{lem:residual} gives
\[
 \int_b^\tau H_A(u)\frac{Q_3}{r u'^2}\dd r>0.
\]
Thus in every case
\begin{equation}\label{eq:middle-positive}
 \int_b^{\bar b}H_A(u)\frac{Q_n}{r u'^2}\dd r>0.
\end{equation}
Combining \eqref{eq:high-pair-positive} and \eqref{eq:middle-positive} in \eqref{eq:T2integral} gives
\[
 T_2(t^+)>T_2(t^-)>0.
\]
Thus the unique possible minimum of $T_2$ is positive, and $T_2(c_+)>0$.
\end{proof}

\begin{proposition}\label{prop:phase}
Assume \eqref{eq:ind-data}. Then
\begin{enumerate}[label=\textup{(\roman*)}]
\item $v$ has exactly one zero $\tau\in(c_-,r)$ on $[c_-,c_+]$.
\item $Q_1,Q_2>0$ on $[c_-,c_+]$.
\item $Q(c_+)>0$, $M(c_+)>0$, and $T_2(c_+)>0$.
\end{enumerate}
\end{proposition}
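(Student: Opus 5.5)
The proposition assembles Lemmas~\ref{lem:prep}, \ref{lem:river} and \ref{lem:renewal}, plus one Wronskian computation for $M(c_+)$ modeled on Step~5 of Theorem~\ref{thm:firstphase}. Throughout we work in the normalized orientation \eqref{eq:phase-order}. This is legitimate because every quantity appearing in (i)--(iii) ($v$'s zero set, $Q_1$, $Q_2$, $Q$, $M$, $T_2$) is invariant under $(u,v)\mapsto(-u,-v)$.

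For (i), the discussion preceding Lemma~\ref{lem:prep} produces a first zero $\tau\in(c_-,r)$ with $v>0$ on $[c_-,\tau)$. Lemma~\ref{lem:prep} excludes further zeros before $\max\{\tau,b\}$, and Lemma~\ref{lem:river} excludes a second zero on $[\tau,c_+]$. Hence $\tau$ is the only zero, and $v<0$ on $(\tau,c_+]$. Part (ii) is exactly \eqref{eq:Q12all}. In (iii), $Q(c_+)>0$ is contained in \eqref{eq:Qbb}, and $T_2(c_+)>0$ is Lemma~\ref{lem:renewal}.

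The only new point is $M(c_+)>0$, which I would obtain through $\rho$ as in Step~5 of Theorem~\ref{thm:firstphase}. At $\bar r$, we have $u=-1$, $f(-1)=0$, $f'(-1)=p-q>0$, and $u'(\bar r)v(\bar r)>0$ because $u'<0$ and $v<0$ there. Definition \eqref{eq:rho} then gives
\[
\rho(\bar r)=\bar r^{\,n-1}f'(-1)u'(\bar r)v(\bar r)>0.
\]
On $(\bar r,c_+)$ we have $u<-1$, so $f''(u)<0$ by \eqref{eq:high-signs}, and also $v<0$. Therefore \eqref{eq:rhoprime} gives $\rho'>0$, so $\rho(c_+)>0$. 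Since $u'(c_+)=0$,
\[
\rho(c_+)=-c_+^{n-1}f(u(c_+))v'(c_+).
\]
With $f(u(c_+))<0$, this forces $v'(c_+)>0$. Consequently
\[
M(c_+)=-c_+^{n-1}u(c_+)v'(c_+)>0,
\]
because $u(c_+)=-A_+<0$.

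The substantive difficulty has already been absorbed into Lemma~\ref{lem:renewal}: the bridge integral, the pairing argument, and the residual estimate Lemma~\ref{lem:residual} for $n=3$, $p+q<3$. What remains here is careful bookkeeping. In particular, one must check that $\bar r$ lies in $(\tau,c_+)$, so that $v(\bar r)<0$; this holds because $\tau<r<z<\bar r$. One must also note that the reflection invariance of $M$, $Q$ and $T_2$ lets the conclusions at $c_+$ serve as the hypothesis \eqref{eq:ind-data} for the next, oppositely oriented phase.
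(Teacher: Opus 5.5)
Your proposal is correct and follows the paper's proof essentially step for step: (i), (ii), $Q(c_+)>0$ and $T_2(c_+)>0$ are read off from Lemmas~\ref{lem:prep}, \ref{lem:river} and \ref{lem:renewal}, and $M(c_+)>0$ is recovered exactly as in the paper, by propagating $\rho(\bar r)>0$ to $c_+$ via $\rho'=r^{n-1}f''(u)u'^2v>0$ and then deducing $v'(c_+)>0$. Your extra bookkeeping is accurate and only makes explicit what the paper leaves implicit: that $\bar r>\tau$, so $v(\bar r)<0$, and that reflection invariance lets the conclusions at $c_+$ serve as the hypotheses for the next phase.
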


\begin{proof}
Lemmas~\ref{lem:prep} and~\ref{lem:river} give the unique zero of $v$ and the positivity of $Q_1,Q_2$ on $[c_-,c_+]$.  Lemmas~\ref{lem:river} and~\ref{lem:renewal} yield $Q(c_+)>0$ and $T_2(c_+)>0$.

It remains to recover $M(c_+)>0$.
At $\bar r$, $f=0$, $f'=p-q>0$, and $u'v>0$, hence $\rho(\bar r)>0$.  On $(\bar r,c_+)$, the signs of $f''(u)$ and $v$ agree, so \eqref{eq:rhoprime} gives $\rho'>0$.  Thus at $c_+$,
\[
 0<\rho(c_+)=-c_+^{n-1}f(u(c_+))v'(c_+),
\]
which forces $-u(c_+)v'(c_+)>0$ and therefore $M(c_+)>0$.
\end{proof}

\begin{corollary}
\label{addcor2}
Under the assumptions of Theorem~\ref{thm:firstphase}, $T_2(c_1)>0$.
\end{corollary}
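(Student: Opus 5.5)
The plan is to repeat the proof of Lemma~\ref{lem:renewal} on the first phase $[0,c_1]$. Every ingredient used there for a general phase has a first-phase counterpart in Theorem~\ref{thm:firstphase} and Corollaries~\ref{cor:Qn-first}, \ref{addcor1}. The only structural difference is that the initial point is the origin rather than a critical point $c_->0$. There the induction data \eqref{eq:ind-data} degenerate, since $Q(0)=M(0)=T_2(0)=0$. They are replaced by the explicit sign record: $v>0$, $v'<0$, $Q,M>0$ on $(0,\tau_1]$, and $\rho(0)=0$ with $\rho'>0$ on $(0,\tau_1)$.

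The first step is to establish the analogue of Lemma~\ref{lem:prep} on $[0,\max\{\tau_1,b_1\}]$, namely $T_2>0$ there. Near the origin \eqref{eq:T2prime-regular} gives $T_2'=r^{n-1}g_2'(u)h_1(u)v-g_2'(u)u'M>0$, because $v>0$, $u'<0$ and $M>0$ for small $r>0$. Hence $T_2>0$ on $(0,\tau_1]$, starting from $T_2(0)=0$. If $\tau_1<b_1$, I rerun the argument of Lemma~\ref{lem:prep} on $(\tau_1,b_1]$. At a first point with $T_2'=0$ and $T_2>0$, the connection identity \eqref{eq:connection} with $P>0$ (Lemma~\ref{lem:Ppositive}) forces $Q<0$. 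Then $g_2<0$, which contradicts $F>0$ before $b_1$. So $T_2,T_2'>0$ up to $b_1$.

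Next, on $[\bar b_1,c_1]$, I argue exactly as in Lemma~\ref{lem:renewal}. First, $T_2(\bar b_1)=Q(\bar b_1)>0$ and $T_2'(c_1)>0$. Second, the second-derivative computation from \eqref{eq:T2B0} and \eqref{eq:B0prime} shows that $T_2'$ has at most one zero $t^+$, and that it crosses upward there, using $Q_n>Q_1>0$ on this branch (Theorem~\ref{thm:firstphase}). If no zero exists, we are done. Otherwise set $A=|u(t^+)|\in(\beta,|u(c_1)|)$ and choose $t^-\in(0,b_1)$ with $u(t^-)=A$; this is possible since $\alpha>|u(c_1)|$ by Lemma~\ref{lem:nodal-geometry}. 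With $a=g_2(A)$, the bridge $B_a$ and \eqref{eq:Baprime} give \eqref{eq:T2integral}. It remains to show that the right-hand integral is positive, for then $T_2(c_1)>T_2(t^+)>T_2(t^-)>0$.

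For the positivity of the integral, I split $[t^-,t^+]$ into the high-amplitude pair $[t^-,b_1]\cup[\bar b_1,t^+]$ and the middle interval $[b_1,\bar b_1]$.
\begin{itemize}
\item \textbf{Middle interval.} This is handled by Corollary~\ref{cor:Qn-first} when $n\ge4$ or $p+q\ge3$. Otherwise it is handled by Corollary~\ref{addcor1}, together with $Q_3>Q_1>0$ on $(\tau_1,\bar b_1]$.
\item \textbf{High-amplitude pair.} I first need the first-phase version of Lemma~\ref{lem:phi}, i.e.\ $\Phi(r^+_\mu)>\Phi(r^-_\mu)>0$. Its proof uses only $Q(\bar b_1)>Q(b_1)>0$ (from Theorem~\ref{thm:firstphase}), $Q_1>0$, the monotonicity of $\widehat E$, and the integral formula \eqref{eq:Phi-integral}. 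All of these carry over verbatim. Then $\Psi(\mu)>0$ follows as in \eqref{eq:Psi-positive}, using \eqref{eq:connection}, $P>0$, $P/r^n$ decreasing, and $n\ge3$.
\end{itemize}

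The main obstacle I expect is checking that none of these arguments secretly used positivity of $r^-_\mu$ away from the origin, or the data at $c_-$. In particular, $r^-_\mu$ ranges only over $\mu\le A<\alpha$, so every point stays in $(0,b_1]$, where $P>0$ by Lemma~\ref{lem:Ppositive}. Also, $\rho(b_1)>0$ follows from $\rho(0)=0$ and $\rho'>0$, which is all that Lemma~\ref{lem:residual} requires. With these points checked, the argument of Lemma~\ref{lem:renewal} applies unchanged and yields $T_2(c_1)>0$.
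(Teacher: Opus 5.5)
Your proposal is correct and follows the paper's proof. You rerun the steps of Lemma~\ref{lem:renewal} on $[0,c_1]$, replacing the induction data by the sign record of Theorem~\ref{thm:firstphase}, using the first-phase version of Lemma~\ref{lem:phi}, and using Corollaries~\ref{cor:Qn-first} and~\ref{addcor1} for the middle interval. The only minor difference is how $T_2(t^-)>0$ is obtained: the paper reads it off in one line from $T_2=T_1+(g_1-g_2)M$ with $T_1,M>0$ on $(0,r_1]$, whereas you rederive it by the $T_2'$-argument of Lemma~\ref{lem:prep}, which also works because $v<0$ on $(\tau_1,c_1]$ is already known.
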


\begin{proof}
Theorem~\ref{thm:firstphase} gives one zero $\tau_1$ of $v$,
$Q_1>0$ on $(0,c_1]$, and
\[
 Q>0\quad\text{on }(0,b_1],\qquad
  Q(\bar b_1)>Q(b_1)>0.
\]
The proof of Lemma~\ref{lem:phi} applies to the first phase with the initial positivity assumptions replaced by the conclusions of Theorem~\ref{thm:firstphase}.
Moreover,
since $M,T_1>0$ on $(0,b_1]$ and $g_1-g_2>0$ for $s>1$, we have
\[
 T_2=T_1+(g_1-g_2)M>0\quad\text{on }(0,b_1].
\]
For $n\ge4$, or $n=3$ with $p+q\ge3$,
Corollary~\ref{cor:Qn-first} provides the necessary pointwise
$Q_n$ positivity.  In the residual range $n=3$, $p+q<3$,
Corollary~\ref{addcor1} gives pointwise positivity if
$\tau_1\le b_1$, and the requisite weighted integral positivity if
$b_1<\tau_1$.  These are exactly the two alternatives used in the
middle-interval step of Lemma~\ref{lem:renewal}.  Applying the remaining
steps of that lemma to the first phase yields $T_2(c_1)>0$.
\end{proof}

Theorem~\ref{thm:firstphase} and Corollary~\ref{addcor2} give
\[
 Q(c_1)>0,\qquad M(c_1)>0,\qquad T_2(c_1)>0.
\]
Therefore Proposition~\ref{prop:phase} applies to every subsequent finite
nodal phase. Together with Lemma~\ref{lem:nodal-geometry}, we directly obtain

\begin{proposition}\label{cor:variation-finite}
Let $k\ge1$ and $u$ be a $k$-node bound state of \eqref{eq:shoot}.  If
\[
 0<z_1<\cdots<z_k
\]
are its zeros and
\[
 0=c_0<c_1<\cdots<c_k
\]
are the associated critical points, then $v=\partial_\alpha u$ has exactly one zero in $(0,z_1)$ and exactly one zero in each interval $(z_{i-1},z_i)$ for $2\le i\le k$.  At every nodal zero, one has
\[
 u'(z_i(\alpha),\alpha)v(z_i(\alpha),\alpha)>0,
 \qquad
 z_i'(\alpha)=-\frac{v(z_i(\alpha),\alpha)}{u'(z_i(\alpha),\alpha)}<0.
\]
Moreover
\[
 Q(c_k),\ M(c_k),\ T_2(c_k)>0.
\]
\end{proposition}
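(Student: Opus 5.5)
The plan is an induction over the finite nodal phases $[c_{i-1},c_i]$, $1\le i\le k$, using Theorem~\ref{thm:firstphase} and Corollary~\ref{addcor2} as the base case and Proposition~\ref{prop:phase} as the inductive step. By Lemma~\ref{lem:nodal-geometry}, the critical points alternate in sign, there is exactly one zero $z_i$ between $c_{i-1}$ and $c_i$, and the amplitudes satisfy $|u(c_{i-1})|>|u(c_i)|>\beta$ for $i\le k$. Here $c_k$ is the unique critical point after the last zero from Lemma~\ref{lem:nodal-geometry}(iii). It is also a critical point preceded by a zero, so $[c_{k-1},c_k]$ is a genuine finite phase. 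In particular $|u(c_k)|>\beta$, which follows from part (iv), since $c_{k-1}<c_k$ are consecutive critical points separated by $z_k$. Thus each $[c_{i-1},c_i]$ with $i\ge2$ satisfies, after the reflection $(u,v)\mapsto(-u,-v)$ when $i$ is even, the normalization \eqref{eq:phase-order}.

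\textbf{Base case.} For the first phase $[0,c_1]$, Theorem~\ref{thm:firstphase} gives:
\begin{itemize}
\item exactly one zero $\tau_1\in(0,r_1)\subset(0,z_1)$ of $v$ on $[0,c_1]$;
\item $v<0$ on $(\tau_1,c_1]$, hence $u'(z_1)v(z_1)>0$;
\item $Q(c_1),M(c_1)>0$.
\end{itemize}
Corollary~\ref{addcor2} adds $T_2(c_1)>0$, so \eqref{eq:ind-data} holds at $c_-=c_1$.

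\textbf{Inductive step.} Assume $Q,M,T_2>0$ at $c_{i-1}$. These quantities are invariant under the reflection, so \eqref{eq:ind-data} holds for the normalized phase. Proposition~\ref{prop:phase} then gives exactly one zero of $v$ in $(c_{i-1},r)$, which lies before the nodal zero, and no other zero on $[c_{i-1},c_i]$, together with $Q,M,T_2>0$ at $c_i$.

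Since $v(c_{i-1})\neq0$ by \eqref{eq:initial-v-signs}, the zeros of $v$ in $(z_{i-1},z_i)$ are exactly those in $(z_{i-1},c_{i-1}]\cup[c_{i-1},z_i)$. The first piece lies in the previous phase after its unique zero, so $v$ has no zero there. Hence $v$ has exactly one zero in $(z_{i-1},z_i)$. The proof of Lemma~\ref{lem:river} also shows that, in normalized orientation, $v<0$ after $\tau$, where $u'<0$; so $u'v>0$ at $z_i$. This sign survives un-normalization because $u'v$ is reflection invariant. Iterating up to $i=k$ yields $Q(c_k),M(c_k),T_2(c_k)>0$.

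\textbf{Nodal zeros.} The formula for $z_i'(\alpha)$ comes from the implicit function theorem. Differentiating $u(z_i(\alpha),\alpha)=0$ in $\alpha$ gives $u'z_i'+v=0$, and $u'(z_i)\ne0$ because zeros are simple. Then $u'v>0$ gives $z_i'(\alpha)=-v/u'<0$, and $C^1$ dependence on $\alpha$ together with simplicity of the zeros makes $z_i(\alpha)$ well defined and $C^1$ locally.

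The only delicate point is bookkeeping. One has to check that the last phase $[c_{k-1},c_k]$ satisfies the hypotheses of the finite-phase analysis even though $u$ has no further zero beyond $c_k$. The lemmas of Section~\ref{sec:transition} used positivity of $P$ and $P_2$ and $E>E(c_+)>0$ on $[c_-,c_+]$, and these hold there by Lemma~\ref{lem:Ppositive} and by Lemma~\ref{lem:nodal-geometry}(iv). Nothing beyond $c_+$ was used, so the step applies verbatim.
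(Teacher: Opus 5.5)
Your proposal is correct and follows the paper's argument. Theorem~\ref{thm:firstphase} and Corollary~\ref{addcor2} supply $Q(c_1),M(c_1),T_2(c_1)>0$, and Proposition~\ref{prop:phase}, applied after the reflection $(u,v)\mapsto(-u,-v)$ on alternate phases, propagates these data phase by phase up to $c_k$, with Lemma~\ref{lem:nodal-geometry} giving the phase geometry. Your additional bookkeeping is exactly what the paper compresses into ``we directly obtain'': each zero of $v$ lies before the level $\abs{u}=1$, so $(z_{i-1},c_{i-1}]$ contains no zero; the sign $u'v>0$ at $z_i$ comes from $v<0$ after $\tau$; and the last phase $[c_{k-1},c_k]$ meets the hypotheses of Section~\ref{sec:transition} because $\abs{u(c_k)}>\beta$ and $E>0$.
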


\section{Terminal tail and variation}\label{sec:tail}

We first normalize the terminal phase once and for all.  Let $u$ be a
bound state of \eqref{eq:shoot}, and let $c_*$ denote the initial point of
its terminal monotone branch.  Thus, for a nodal bound state, $c_*$ is the
unique critical point after the last zero.  By Lemma~\ref{lem:nodal-geometry}, together with the same
reflection used for the finite phases before \eqref{eq:phase-order}, we
may and shall assume throughout this section that
\begin{equation}\label{eq:terminal-normalization}
 u(c_*)>\beta,\qquad u'(c_*)=0,\qquad
 u(r)>0,\ u'(r)<0\quad \text{if } r>c_*,\qquad
 u(r)\rightarrow0\quad \text{as }r\to\infty.
\end{equation}
Whenever the variation $v=\partial_\alpha u$ is involved, the reflection is applied
simultaneously to $(u,v)$; as observed in Section~\ref{sec:transition},
all comparison functionals used below are unchanged.

Since $\beta>1$, the monotonicity in \eqref{eq:terminal-normalization}
uniquely determines two terminal levels
\begin{equation*}
{0<c_*<b_{*}<r_{*}<\infty},\qquad
 u(b_{*})=\beta,\qquad u(r_{*})=1.
\end{equation*}

\subsection{Terminal-tail estimate}

For the terminal separation in Section~\ref{sec:global}, the only
asymptotic information needed is the scale-invariant estimate
$r\abs{u'(r)}\to0$.  In the zero-mass problem, it follows immediately
from the radial flux.

\begin{proposition}\label{prop:terminal-tail}
Set
\[
 Z(r):=-r^{n-1}u'(r).
\]
Then $Z>0$ on $[r_{*},\infty)$ and $Z$ is strictly
decreasing on $(r_{*},\infty)$.  Moreover, for every $r\ge r_{*}$,
\begin{equation}\label{eq:terminal-tail-basic}
 0<-u'(r)\le Z(r_{*})r^{1-n},
 \qquad
 0<u(r)\le \frac{Z(r_{*})}{n-2}r^{2-n}.
\end{equation}
In particular, $r\abs{u'(r)}\rightarrow0$ as $r\to\infty$.
\end{proposition}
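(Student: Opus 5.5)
The plan rests on the radial form of the equation, $(r^{n-1}u')'=-r^{n-1}f(u)$, which gives
\[
Z'(r)=r^{n-1}f(u(r)).
\]
By the normalization \eqref{eq:terminal-normalization}, $u'<0$ on $(c_*,\infty)$. Since $r_*>c_*$, this gives $Z>0$ on $[r_*,\infty)$. For $r>r_*$ we have $0<u(r)<1$, hence $f(u(r))=u^q(u^d-1)<0$. Therefore $Z'<0$ there, and $Z$ is strictly decreasing on $(r_*,\infty)$. In particular $0<Z(r)\le Z(r_*)$ for $r\ge r_*$, which is exactly the first bound $0<-u'(r)\le Z(r_*)r^{1-n}$ in \eqref{eq:terminal-tail-basic}.

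For the amplitude bound, I will integrate $u'$ from $r$ to $\infty$, using $u(s)\to0$ as $s\to\infty$:
\[
u(r)=\int_r^\infty(-u'(t))\,\dd t\le Z(r_*)\int_r^\infty t^{1-n}\,\dd t=\frac{Z(r_*)}{n-2}\,r^{2-n}.
\]
This integral converges because $n\ge3$. Positivity of $u$ is part of the normalization.

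For the limit, the first estimate already gives $r\abs{u'(r)}\le Z(r_*)r^{2-n}\to0$ when $n\ge4$. The borderline case is $n=3$, where this bound only gives $r\abs{u'}\le Z(r_*)$, and this is the main obstacle. There I will use that $Z$ is decreasing and positive, so $Z(r)\to Z_\infty\ge0$. Suppose $Z_\infty>0$. Since $Z$ decreases to $Z_\infty$, we have $Z(t)\ge Z_\infty$, and integrating as above gives $u(r)\ge Z_\infty r^{-1}$ for large $r$.

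Then $f(u)\sim -u^q$, and for large $r$,
\[
Z(r_*)-Z(r)=\int_{r_*}^r t^{2}\abs{f(u)}\,\dd t\ \gtrsim\ \int^r t^{2}\,t^{-q}\,\dd t.
\]
Because $q<3$ in the subcritical range for $n=3$, the right-hand side diverges. This contradicts the boundedness of $Z$, so $Z_\infty=0$. Then $r\abs{u'(r)}=Z(r)\,r^{2-n}=Z(r)\to0$ for $n=3$ as well. The argument works uniformly if phrased as $r\abs{u'}=Z r^{2-n}\le Z(r)$ together with $Z\to0$, but for $n\ge4$ the cruder bound already suffices.
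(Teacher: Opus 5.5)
Your proof that $Z>0$, that $Z$ is strictly decreasing, and of both bounds in \eqref{eq:terminal-tail-basic} is exactly the paper's argument. The last paragraph, however, rests on an arithmetic slip: $n=3$ is not a borderline case. Your first bound gives $0<r\abs{u'(r)}\le Z(r_*)\,r^{2-n}$, and $2-n\le-1$ for every $n\ge3$. For $n=3$ the bound reads $Z(r_*)/r\to0$; the exponent would vanish only for $n=2$. So the identity ``$r\abs{u'(r)}=Z(r)\,r^{2-n}=Z(r)$ for $n=3$'' is wrong. The paper concludes directly from $0<-ru'(r)\le Z(r_*)r^{2-n}\to0$, uniformly in $n\ge3$.

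The detour you built for $n=3$ is not only unnecessary but fails on part of the range, so as written your proof has a broken step there. For $n=3$, subcriticality means $p<5$, so $q$ may exceed $3$, contrary to your claim $q<3$. For $3<q<p<5$ the integral $\int^\infty t^{2-q}\dd t$ converges, and the conclusion $Z_\infty=0$ is in fact false. To see this, put $w=ru$. Then $w''=\epsilon(r)w$ with $0\le\epsilon(r)=r^{1-q}w^{q-1}(1-u^{d})\le Cr^{1-q}$, since $w\le Z(r_*)$. Because $w$ is convex, bounded and decreasing, $-w'(r)\le w(r)\int_r^\infty\epsilon\le C\,w(r)\,r^{2-q}$. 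Hence $\log w$ stays bounded below, and $u\sim Z_\infty r^{-1}$ with $Z_\infty>0$; this is the usual harmonic decay of zero-mass tails. The same objection applies to your remark that the argument ``works uniformly'' via $Z\to0$: $Z$ need not tend to zero, and the proof does not need it to. Drop the detour and conclude from the first estimate.
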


\begin{proof}
By \eqref{eq:terminal-normalization}, $Z>0$.  Since $0<u<1$ on
$(r_{*},\infty)$,
\[
 Z'(r)=r^{n-1}f(u(r))
 =-r^{n-1}u(r)^q\bigl(1-u(r)^{p-q}\bigr)<0.
\]
Hence $0<Z(r)\le Z(r_{*})$, which gives the first estimate in
\eqref{eq:terminal-tail-basic}.  Using $u(r)\to0$ and $u'<0$,
\[
 u(r)=\int_r^\infty -u'(s)\,\dd s
 \le Z(r_{*})\int_r^\infty s^{1-n}\,\dd s
 =\frac{Z(r_{*})}{n-2}r^{2-n}.
\]
The last assertion follows from
$0<-ru'(r)\le Z(r_{*})r^{2-n}\to0$.
\end{proof}




\subsection{Terminal variation}


The variation argument up to the level $u=\beta$ is exactly the local
part of the finite-phase argument in Lemma~\ref{lem:prep}.  We isolate the
result in the terminal notation introduced above.

\begin{lemma}\label{lem:tail-prep}
Assume that at $c_*$,
\begin{equation}\label{eq:tail-ind-data}
 Q(c_*)>0,\qquad M(c_*)>0,\qquad T_2(c_*)>0.
\end{equation}
Then $v$ has a unique zero $\tau_{*}$ on  $[c_*,s_{**}]$, where $s_{**}:=\max\{\tau_{*},b_{*}\}$. Moreover,
\begin{equation}\label{eq:tail-prep-conclusion}
 T_2,T_2',Q,M,Q_1,Q_2>0
 \quad\text{on }[c_*,s_{**}].
\end{equation}
\end{lemma}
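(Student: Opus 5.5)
The plan is to transplant the argument of Lemma~\ref{lem:prep}, together with the preliminary Step~1 preceding it, verbatim to the terminal branch $[c_*,\infty)$. The key observation is that those arguments only use the half-phase from a critical point $c_-$ with $u(c_-)>\beta$ down to the level $u=\beta$ (and to $u=1$ for locating $\tau$). They never use the existence of a later zero $z$ or of the opposite critical point $c_+$, except through positivity of $P$ and $P_1$. Positivity of $P$ on $[c_*,\infty)$ is supplied by Lemma~\ref{lem:Ptail}. Since $h_1(u)>0$ for $u\ge1$, we then get $P_1=P+r^nh_1(u)>0$ and $\omega'>0$ on $[c_*,r_*]$, which replaces the use of Lemma~\ref{lem:Ppositive} there.

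\textbf{Step 1: sign of $v$ at $c_*$ and existence of $\tau_*$.} From \eqref{eq:tail-ind-data} and $u'(c_*)=0$ we obtain
\[
 Q(c_*)=c_*^nf(u(c_*))v(c_*)>0,\qquad M(c_*)=-c_*^{n-1}u(c_*)v'(c_*)>0.
\]
Hence $v(c_*)>0$, $v'(c_*)<0$, and $\rho(c_*)>0$. Now suppose $v>0$ on $[c_*,r_*)$. Because $u>1$ there, $\rho'>0$ by \eqref{eq:high-signs} and \eqref{eq:rhoprime}. But $\rho(r_*)=r_*^{n-1}f'(1)u'(r_*)v(r_*)\le0$, a contradiction. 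So $v$ has a first zero $\tau_*\in(c_*,r_*)$, with $v'(\tau_*)<0$. On $(c_*,\tau_*)$ the relation $(r^{n-1}v')'=-r^{n-1}f'(u)v<0$ gives $v'<0$, which reproduces the sign record used before Lemma~\ref{lem:prep}.

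\textbf{Step 2: positivity on $[c_*,\tau_*]$ and exclusion of a second zero before $b_*$.} On $[c_*,\tau_*]$ the functionals $Q$ and $M$ increase by \eqref{eq:Mprime}--\eqref{eq:Qprime}. Moreover $T_2'(c_*)>0$ by \eqref{eq:T2prime-regular}, and $T_2'=-g_2'(u)u'(M-\theta)>0$ because $\theta\le0<M$. This gives $T_2,T_2'>0$ there. If $\tau_*\ge b_*$, then $Q_1,Q_2$ increase from $Q(c_*)>0$, and we are done.

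If $\tau_*<b_*$, we rerun the argument of Lemma~\ref{lem:prep}. A hypothetical second zero $\widetilde\tau\le b_*$ would give $T_2(\widetilde\tau)<0$, using the monotonicity of $g_2(u)$ and of $\omega$ (the latter from $P_1>0$). This forces a first point $t_*<\widetilde\tau$ with $T_2'(t_*)=0$ and $T_2(t_*)>0$. At $t_*$ we have $M=\theta>0$, and the connection identity \eqref{eq:connection} with $P>0$ (Lemma~\ref{lem:Ptail}) and $uv<0$ gives $Q<0$. Then $T_2>0$ forces $g_2(u(t_*))<0$, i.e.\ $u(t_*)<\beta$, contradicting $t_*<b_*$. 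The same contradiction shows that $T_2'$ cannot vanish on $(\tau_*,b_*]$.

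Next, $Q(b_*)=T_2(b_*)>0$ because $g_2(\beta)=0$, and $Q'<0$ on $(\tau_*,b_*]$, so $Q>0$ there. Since $u'v>0$ there, we also get $Q_2>Q_1>Q>0$. Finally, a first zero $r_m$ of $M$ in $(\tau_*,b_*]$ would give $P_1(r_m)=Q(r_m)u(r_m)/v(r_m)<0$, which contradicts $P_1>0$.

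\textbf{Main obstacle.} The only genuinely new point is verifying that $P$, $P_1>0$ and the monotonicity of $\omega$ hold on the terminal branch without a subsequent zero. Lemma~\ref{lem:Ptail} addresses exactly this, so I would cite it and check that the case $c_*=0$ (the ground-state-type tail) is covered by its second clause, with the endpoint data at $c_*=0$ handled as in Theorem~\ref{thm:firstphase}. Everything else is a line-by-line repetition, and I would state it that way rather than reproduce every computation.
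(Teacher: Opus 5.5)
Your proposal is correct and follows the paper's proof essentially verbatim: Lemma~\ref{lem:Ptail} gives $P>0$ on the terminal branch, $h_1>0$ on $u\ge1$ gives $P_1>0$, the $\rho$-argument places $\tau_*$ in $(c_*,r_*)$, and the local argument of Lemma~\ref{lem:prep} is then rerun with $c_-=c_*$, $b=b_*$, $r=r_*$. Your concern about $c_*=0$ is unnecessary: $Q$, $M$ and $T_2$ all vanish at the origin, so hypothesis \eqref{eq:tail-ind-data} (like the terminal normalization for nodal states) already forces $c_*>0$.
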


\begin{proof}
Lemma~\ref{lem:Ptail} gives $P>0$ on the whole terminal branch, while
$u\ge1$ on $[c_*,r_{*}]$ gives $P_1>0$ there.
From \eqref{eq:tail-ind-data},
\[
 v(c_*)>0,\qquad v'(c_*)<0.
\]
If $v$ stayed positive up to $r_{*}$, then the same $\rho$-argument as in
Section~\ref{sec:transition} would give
$\rho(r_{*})>0$, whereas
\[
 \rho(r_{*})
 =r_{*}^{n-1}f'(1)u'(r_{*})v(r_{*})\le0.
\]
Thus the first zero $\tau_{*}$ lies in $(c_*,r_{*})$; moreover the
variational equation gives $v'<0$ on $(c_*,\tau_{*})$.

From this point, the proof of Lemma~\ref{lem:prep} is entirely local: it
uses only the interval from the initial critical point to the level
$u=\beta$, the positivity of $P$ and $P_1$, and the identities for
$Q,M,T_2,Q_1,Q_2$.  Applying that argument with
\[
 c_-=c_*,\qquad b=b_{*},\qquad r=r_{*}
\]
gives exactly \eqref{eq:tail-prep-conclusion} and excludes a second zero
of $v$ before $s_{**}$.
\end{proof}

\begin{proposition}\label{prop:tail-v}
The variation $v$ has exactly one zero on $(c_*,\infty)$, namely $\tau_{*}\in(c_*,r_{*})$.
Furthermore,
\begin{equation}\label{eq:terminal-transversality}
 v(r_{*})<0,\qquad
 B_0(r_{*})>0,
\end{equation}
and
\[
 Q_1>0\quad\text{on }(c_*,\infty).
\]
\end{proposition}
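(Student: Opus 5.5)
Presumably the proposition is meant under the terminal induction data \eqref{eq:tail-ind-data}, supplied by Proposition~\ref{cor:variation-finite} (with $c_*=c_k$) or, for the ground state, by the first-phase argument. The plan has three stages. First, run the local argument of Lemma~\ref{lem:tail-prep}. Second, copy the bridge continuation of Steps~3--4 of Theorem~\ref{thm:firstphase}, which now runs over an unbounded interval. Third, handle the region $0<u<1$ beyond $r_*$, where $F<0$ throughout; this is what lets the bridge persist all the way to infinity.

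\textbf{Step 1 (up to $s_{**}$).} Lemma~\ref{lem:tail-prep} gives the unique zero $\tau_*\in(c_*,r_*)$ and $Q,Q_1,Q_2>0$ on $[c_*,s_{**}]$. If $s_{**}=\tau_*<r_*$ or $s_{**}=b_*<r_*$, then on $(s_{**},r_*]$ we have $u'v>0$ after $\tau_*$. Hence, as in Step~2 of Theorem~\ref{thm:firstphase}, I would rule out a second zero before $r_*$ as follows. On $(\max\{\tau_*,b_*\},r_*]$ one has $F<0$, so the bridge works there as well. Define $B_0(s_{**})=Q(s_{**})>0$ when $s_{**}=b_*$, where $F=0$. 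When $s_{**}=\tau_*$, use $v(\tau_*)=0$ to get $B_0(\tau_*)=Q(\tau_*)$.

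\textbf{Step 2 (the bridge on $(s_{**},\infty)$).} Let $s_*$ be the first point beyond $s_{**}$ at which either $v$ vanishes again or $Q_1$ vanishes. Before $s_*$ we have $v<0$, $u'<0$, $Q_n>Q_1>0$ and $F(u)<0$, so \eqref{eq:B0prime} gives $B_0'>0$ and therefore $B_0>B_0(s_{**})>0$. Suppose $s_*$ were a zero of $v$. Then $v'(s_*)>0$, so $Q_1(s_*)=s_*^n u'v'<0$, which means $Q_1$ vanished earlier. Suppose instead $Q_1(s_*)=0$ with $v(s_*)<0$. Then $B_0(s_*)=-2s_*^{n-1}(v/u')E(s_*)<0$, because $E>0$ by Lemma~\ref{lem:nodal-geometry}(iv). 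Both cases contradict $B_0>0$. Hence $v<0$ and $Q_1>0$ on $(\tau_*,\infty)$.

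In particular, taking $r_*\geq s_{**}$, or $r_*$ inside the bridge interval, gives $v(r_*)<0$ and $B_0(r_*)>0$. If instead $r_*<s_{**}$, which can happen when $b_*<\tau_*$ is impossible because $\tau_*<r_*$, the ordering $b_*<r_*$ forces $s_{**}<r_*$ anyway. The positivity of $Q_1$ on $(c_*,\tau_*]$ comes from Lemma~\ref{lem:tail-prep}, since $Q_1$ is increasing there by the sign record.

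\textbf{Main obstacle.} The delicate point is that the contradiction arguments must hold on an unbounded interval. I need $E(r)>0$ for every $r$, and this is already provided by Lemma~\ref{lem:nodal-geometry}(iv). No limiting behavior of $v$ at infinity is required, because each contradiction is reached at a finite first event $s_*$. I would also check that $u'\neq0$ on $(c_*,\infty)$, so that $B_0$ and $\Xi$ are defined everywhere; this holds by \eqref{eq:terminal-normalization}.
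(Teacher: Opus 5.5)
Your proposal is correct and follows the paper's proof step for step. You use Lemma~\ref{lem:tail-prep} up to $s_{**}=\max\{\tau_*,b_*\}$ with $B_0(s_{**})=Q(s_{**})>0$, then the one-sided bridge first-event argument on $(s_{**},\infty)$ (a second zero of $v$ would force $Q_1=r^nu'v'<0$ earlier, and a first zero of $Q_1$ with $v<0$ would give $B_0=-2r^{n-1}(v/u')E<0$), and finally $s_{**}<r_*$ to read off $v(r_*)<0$ and $B_0(r_*)>0$. The references to Step~2 of Theorem~\ref{thm:firstphase} and to the ground state are unnecessary, because the proposition concerns nodal states with $c_*>0$ and the bridge alone already covers $(s_{**},r_*]$.
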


\begin{proof}
Proposition~\ref{cor:variation-finite} gives the terminal induction data
\eqref{eq:tail-ind-data}.  Lemma~\ref{lem:tail-prep} therefore yields the
first zero $\tau_{*}$ on $[c_*,s_{**}]$ and, with
$s_{**}=\max\{\tau_{*},b_{*}\}$, gives
\[
 v<0, \ Q_1>0
 \quad\text{immediately to the right of }s_{**},
 \qquad
 B_0(s_{**})=Q(s_{**})>0.
\]
Here the last identity follows because either $v(s_{**})=0$ or
$F(u(s_{**}))=0$.

It remains to continue these signs to infinity.  This is precisely the
one-sided bridge argument already used in Lemma~\ref{lem:river}.  Indeed,
as long as $v<0$ and $Q_1>0$ after $s_{**}$, one has
$u'v>0$ and hence $Q_n>Q_1>0$; since $F(u)<0$ there,
\eqref{eq:B0prime} gives $B_0'>0$.  A second zero of $v$ cannot be the
first bad event, because at such a zero
\[
 Q_1=Q=r^nu'v'<0.
\]
If instead $Q_1$ vanished first while $v<0$, then
\[
 B_0=-2r^{n-1}\frac{v}{u'}E<0,
\]
contradicting $B_0>B_0(s_{**})>0$.  Hence no bad event occurs.  Thus
$\tau_{*}$ is the unique zero of $v$ on $(c_*,\infty)$ and
$Q_1>0$ on $(s_{**},\infty)$. This together with \eqref{eq:tail-prep-conclusion} yields $Q_1>0$ on $(c_{*},\infty)$. 

Finally, $s_{**}<r_{*}$ gives $v(r_{*})<0$, while the same strict monotonicity of $B_0$ gives
$B_0(r_{*})>0$.  This proves \eqref{eq:terminal-transversality}.
\end{proof}

\section{Finite-level separation and global uniqueness}\label{sec:global}

Tang closes the massive problem by studying the zero-count function of the
shooting family.  In particular, the jumping step in
\cite[Lemma~3.1]{Tang2026} uses the terminal information that
the variation is eventually strictly monotone and satisfies
\(\abs{v(r)}\to\infty\).  That mechanism is naturally adapted to the
massive tail, where \(f'(0)<0\).  In the present zero-mass problem,
\(f'(0)=0\), and Proposition~\ref{prop:tail-v} gives the terminal
transversality needed here, but not such a divergence statement for \(v\).
A direct adaptation of that final step would therefore require additional
asymptotic information for the zero-mass linearized equation.

Instead, we close the zero-mass problem at a finite regular level, following
the shooting-topological strategy 
of
Cort\'azar--Garc\'ia-Huidobro--Yarur \cite{CGHY2011}.  The crucial point is
that the terminal transversality obtained in Proposition~\ref{prop:tail-v}
already yields the strict information needed at the regular level
\(\abs{u}=1\).  Indeed, after reflecting the terminal branch when necessary,
the inequalities
\[
 \frac{v}{u'}>0,
 \qquad B_0>0
\]
at this level imply, through Lemma~\ref{lem:Wderivative}, that the inverse
radius decreases with the shooting parameter while the separation quantity
\(W\) increases.  Thus nearby terminal inverse branches acquire a strict
ordering at the fixed level \(\abs{u}=1\).  Lemma~\ref{lem:terminal-separation}
then propagates this ordering down to the lower endpoint of the terminal
branch, where it distinguishes the crossing, decaying, and turning
alternatives and yields the local shooting separation property without requiring any
asymptotic divergence of the variation as \(r\to\infty\).  For the subsequent shooting-topological argument, we use only the basic assumptions $(f_1)$--$(f_2)$ of \cite{CGHY2011}, the openness and local persistence properties established in \cite[Propositions~2.2--2.3]{CGHY2011}.
In particular, the
additional global comparison conditions imposed in \cite{CGHY2011} are not
invoked here; their role is replaced by the finite-level transversality and
separation mechanism established above.

We first verify that the double-power nonlinearity satisfies the hypotheses of the prescribed-node existence theorem and the basic assumptions required for the shooting results used below.

\begin{lemma}\label{lem:external-hyp}
Under assumption \eqref{eq:main-range}, the prescribed-node existence theorem
\cite[Theorem~1]{MTW1990} applies to \(f\).  Moreover, \(f\) satisfies the
basic assumptions $(f_1)$--$(f_2)$ of \cite{CGHY2011}.
\end{lemma}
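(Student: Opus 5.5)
The proof is a direct check of the hypotheses of the two external results. I do not have those papers in front of me, so I recall their hypotheses in the form I remember and verify each one in turn.

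\emph{MTW hypotheses.} I recall those of \cite[Theorem~1]{MTW1990} as follows:
\begin{enumerate}[label=\textup{(\alph*)}]
\item $f$ is odd and locally Lipschitz, with $f(0)=0$;
\item there is a $\gamma>0$ such that $f<0$ on $(0,\gamma)$ and $f>0$ on $(\gamma,\infty)$;
\item $f$ has superlinear but Sobolev-subcritical growth, in the form $\liminf_{s\to\infty} f(s)/s^{l}>0$ and $\limsup_{s\to\infty}f(s)/s^{l'}<\infty$ for some exponents $1<l\le l'<(n+2)/(n-2)$;
\item $F$ has a single positive zero, and for large $s$ a Pohozaev-type growth inequality such as $2nF(s)-(n-2)sf(s)\ge \epsilon\, s f(s)$ holds for some $\epsilon>0$.
\end{enumerate}

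For the double power $f(s)=s^q(s^d-1)$ these are immediate. Oddness and the $C^1$ regularity for $q>1$ were already recorded in Section~\ref{sec2}. The sign structure holds with $\gamma=1$. The growth exponents can be taken as $l=l'=p$, since $f(s)/s^p\to1$. The positive zero of $F$ is $\beta$. For the Pohozaev inequality, \eqref{eq:h-exp} gives
\[
h(s)\sim\frac{n+2-(n-2)p}{p+1}\,s^{p+1},
\]
so $h(s)\ge\epsilon\, s f(s)$ for large $s$ with any $\epsilon<\bigl(n+2-(n-2)p\bigr)/(p+1)$. This constant is positive precisely because $p<(n+2)/(n-2)$.

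\emph{CGHY hypotheses.} For $(f_1)$--$(f_2)$ of \cite{CGHY2011} I would recall the standard form:
\begin{itemize}
\item[$(f_1)$] $f$ is odd and continuous, locally Lipschitz on $\R\setminus\{0\}$, and there are $0<\gamma\le\beta$ with $f<0$ on $(0,\gamma)$, $f>0$ on $(\gamma,\infty)$, $F<0$ on $(0,\beta)$ and $F>0$ on $(\beta,\infty)$;
\item[$(f_2)$] $F(s)\to\infty$ as $s\to\infty$, together with the integrability or regularity condition near $0$ that guarantees uniqueness of the zero solution. In the present setting this is automatic, since $f$ is $C^1$ with $f'(0)=0$.
\end{itemize}
These again follow from the explicit formulas: $\gamma=1<\beta$ and $F(s)\sim s^{p+1}/(p+1)$. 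Lipschitz continuity at $0$ gives the uniqueness of the trivial solution needed by their Propositions~2.2--2.3, which is the same fact used after \eqref{eq:shoot} to make every zero simple.

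\emph{Expected obstacle.} The only genuine difficulty is bookkeeping: matching the precise formulation in each reference. If MTW phrase subcriticality differently, for instance as $\limsup f(s)/s^{(n+2)/(n-2)}=0$ together with a condition $\theta F(s)\le sf(s)$ for large $s$ with some $\theta>2$, the same asymptotics settle it, since $sf(s)/F(s)\to p+1>2$. Some versions also require $f'(0)\le0$ or the weaker condition $f(s)/s\to0$; both hold here because $f'(0)=0$, and zero mass is explicitly allowed in \cite{MTW1990}. I expect no other difficulty.
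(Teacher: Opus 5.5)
Your proposal is correct and follows the same route as the paper, which also just reads off oddness, local Lipschitz continuity, the sign pattern of $f$ (zero at $1$) and of $F$ (zero at $\beta$), $F\to\infty$, and the asymptotics $f(s)=\abs{s}^{p-1}s+o(\abs{s}^p)$ with $1<p<(n+2)/(n-2)$, and then takes $b=1$ for $(f_1)$ of CGHY. For $(f_2)$ the paper uses the explicit bound $\abs{f'(s)}\le ps^{p-1}+qs^{q-1}\in L^1(0,1)$, which your remark that $f\in C^1$ with $f'(0)=0$ already covers; your extra Pohozaev-type and superquadratic checks are harmless but not needed for the hypotheses as the paper invokes them.
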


\begin{proof}
The function $f(s)=\abs{s}^{p-1}s-\abs{s}^{q-1}s$
is odd, belongs to \(C^1(\R)\), and is locally Lipschitz.  On \((0,\infty)\),
\[
 f<0\quad\hbox{on }(0,1),\qquad f(1)=0,
 \qquad
 f>0\quad\hbox{on }(1,\infty).
\]
Its primitive $F(s)$
has the unique positive zero $\beta>1$,
and \(F(s)\to\infty\) as \(\abs{s}\to\infty\).  Moreover,
\[
 f(s)=\abs{s}^{p-1}s+o(\abs{s}^{p})
 \quad\text{as }\abs{s}\to\infty,
\]
and \(1<p<(n+2)/(n-2)\).
These are the hypotheses of \cite[Theorem~1]{MTW1990}
for the present double-power nonlinearity, so for
every prescribed finite number of zeros, there is a radial solution tending
to zero at infinity.  Every finite zero is simple by ODE uniqueness.

For \cite{CGHY2011}, take \(b=1\).  The sign properties above and the zero
\(\beta\) of \(F\) give $(f_1)$.  Furthermore,
\[
 \abs{f'(s)}\le p s^{p-1}+q s^{q-1}\in L^1(0,1),
\]
so $(f_2)$ also holds.
\end{proof}

\subsection{Finite-level terminal separation}

Set 
\begin{equation*}
E(r,\alpha):=\frac12 u'(r,\alpha)^2+F(u(r,\alpha)).
\end{equation*}
The next
identity is the point at which the phase analysis of Section~\ref{sec:first}--\ref{sec:tail} enters the
finite-level comparison.

\begin{lemma}\label{lem:Wderivative}
Let \(s\) be a fixed regular level on a monotone branch of the shooting
family, and 
denote by $R(\alpha):=r(s,\alpha)$ the corresponding inverse radius. Then
\[
 u(R(\alpha),\alpha)=s,
 \qquad
 u'(R(\alpha),\alpha)\neq0.
\]
Moreover, \(R\) is \(C^1\) and
\begin{equation}\label{eq:Ralpha}
 R'(\alpha)
 =-\frac{v(R(\alpha),\alpha)}
        {u'(R(\alpha),\alpha)}.
\end{equation}
Set
\[
 D(s,\alpha)
 :=u'(R(\alpha),\alpha)^2+2F(s)
 =2E(R(\alpha),\alpha).
\]
Then the quantity
\[
 W(s,\alpha)
 :=R(\alpha)\sqrt{D(s,\alpha)}\quad \text{if }D(s,\alpha)>0
\]
satisfies
\begin{equation}\label{eq:Walpha}
 \partial_\alpha W(s,\alpha)
 =
 \frac{B_0(R(\alpha),\alpha)}
 {R(\alpha)^{n-1}\sqrt{D(s,\alpha)}}.
\end{equation}
\end{lemma}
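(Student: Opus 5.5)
The plan is to apply the implicit function theorem to the level equation and then differentiate $W^2 = R^2 D$ directly. The only real step is recognizing the result as $B_0$ through the definitions of $Q$ and $\Xi$.

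\textbf{Inverse radius.} Since $s$ is a regular level on a monotone branch, $u'(R(\alpha),\alpha)\ne0$. The map $(r,\alpha)\mapsto u(r,\alpha)$ is $C^1$ away from the origin, with $\partial_r u=u'$ and $\partial_\alpha u=v$. The implicit function theorem applied to $u(R(\alpha),\alpha)=s$ therefore gives a $C^1$ function $R$. Differentiating the identity gives $u'R'+v=0$, which is \eqref{eq:Ralpha}.

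\textbf{Derivative of $D$.} Because $F(s)$ is constant in $\alpha$, we have $\partial_\alpha D=2u'\,(u''R'+v')$, evaluated at $(R(\alpha),\alpha)$. Substituting $u''=-\frac{n-1}{R}u'-f(s)$ and $R'=-v/u'$ gives
\[
 \partial_\alpha D
 =2u'v'+\frac{2(n-1)}{R}u'v+2f(s)v.
\]

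\textbf{Derivative of $W$.} Compute $\partial_\alpha(W^2)=\partial_\alpha(R^2D)=2RR'D+R^2\partial_\alpha D$. Using $R'=-v/u'$ and $D=u'^2+2F(s)$, this becomes
\[
 -2R\frac{v}{u'}\bigl(u'^2+2F(s)\bigr)
 +2R^2\bigl(u'v'+f(s)v\bigr)+2(n-1)Ru'v .
\]
The $u'^2$ part of the first term contributes $-2Ru'v$, so the $u'v$ terms combine to $2(n-2)Ru'v$. Hence
\[
 \partial_\alpha(W^2)
 =2R^{2-n}\Bigl[R^n(u'v'+fv)+(n-2)R^{n-1}u'v-2F(s)\frac{R^{n-1}v}{u'}\Bigr].
\]
By \eqref{eq:Q} and the definition of $B_0$, the bracket is exactly $Q-2F\,\Xi=B_0$ evaluated at $R(\alpha)$. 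So $\partial_\alpha(W^2)=2R^{2-n}B_0$.

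\textbf{Conclusion.} Where $D>0$, $W$ is $C^1$ with $\partial_\alpha W=\partial_\alpha(W^2)/(2W)$. Dividing by $2W=2R\sqrt D$ gives $R^{1-n}B_0/\sqrt D$, which is \eqref{eq:Walpha}.

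The only point needing care is the bookkeeping in the $W^2$ step: the term $-2Ru'v$ coming from $RR'u'^2$ must combine with $2(n-1)Ru'v$ to produce the $(n-2)$ coefficient that appears in $Q$. Apart from that, the argument is a direct computation, with the $C^1$ regularity justified as in the variation discussion of Section~\ref{sec2}.
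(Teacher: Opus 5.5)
Your proposal is correct and follows the paper's proof essentially step for step. Both use the implicit function theorem for $R$ and $\partial_\alpha D = 2u'(v' - u''v/u')$, substitute the radial equation, and identify the result as $B_0/R^{n-1}$. The only cosmetic difference is that you differentiate $W^2=R^2D$ and divide by $2W$, whereas the paper computes $\sqrt{D}\,\partial_\alpha W = R'D + \tfrac{R}{2}\partial_\alpha D$ directly; these are the same identity.
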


\begin{proof}
Since \(s\) is a regular value on the chosen branch,
\(u'(R(\alpha),\alpha)\neq0\).  The implicit function theorem, together
with the \(C^1\)-dependence of the shooting solution on \(\alpha\), shows
that \(R(\alpha)\) is \(C^1\).  Differentiating $u(R(\alpha),\alpha)=s$
with respect to \(\alpha\) gives
\[
 u'(R(\alpha),\alpha)R'(\alpha)
 +v(R(\alpha),\alpha)=0,
\]
which proves \eqref{eq:Ralpha}.

We next differentiate \(W\) with \(s\) fixed.  To simplify the notation,
in the remainder of the proof all quantities \(u,u',u'',v,v'\) are
evaluated at \((R(\alpha),\alpha)\), and we write \(R=R(\alpha)\) and
\(D=D(s,\alpha)\).  Then 
\[
 \partial_\alpha u'
 =v'+u''R'
 =v'-u''\frac{v}{u'}.
\]
Since \(F(s)\) is independent of \(\alpha\),
\[
 \partial_\alpha D
 =2u'\left(
 v'-u''\frac{v}{u'}
 \right).
\]
Therefore
\begin{align*}
 \sqrt D\partial_\alpha W
 =
 R'D+\frac{R}{2}\partial_\alpha D
 =
 -\frac{v}{u'}D
 +R\left(u'v'-u''v\right).
\end{align*}
Using
\[
 u''
 =-\frac{n-1}{R}u'-f(u),
 \qquad
 u=s,
\]
we obtain
\begin{align*}
 \sqrt D\partial_\alpha W
 =&
 -\frac{v}{u'}\bigl(u'^2+2F(u)\bigr)
 +R u'v'+(n-1)u'v+R f(u)v
 \\=&
 R\bigl(u'v'+f(u)v\bigr)
 +(n-2)u'v
 -2F(u)\frac{v}{u'}\\
 =&\frac{1}{R^{n-1}}
 \left(
 Q-2F(u)\frac{R^{n-1}v}{u'}
 \right)=\frac{B_0(R,\alpha)}{R^{n-1}},
\end{align*}
and \eqref{eq:Walpha} follows.  
\end{proof}

We now use the shooting notation of \cite{CGHY2011}.  For \(\alpha>1\), let
\[
 Z_1(\alpha):=\sup\{r>0:u(s,\alpha)>0\ \text{and}\
 u'(s,\alpha)<0\ \text{for all }s\in(0,r)\}.
\]
Set
\begin{align*}
 \mathcal N_1&:=\{\alpha>1:u(Z_1(\alpha),\alpha)=0,
                    \ u'(Z_1(\alpha),\alpha)<0\},\\
 \mathcal G_1&:=\{\alpha>1:u(Z_1(\alpha),\alpha)=0,
                    \ u'(Z_1(\alpha),\alpha)=0\},\\
 \mathcal P_1&:=\{\alpha>1:u(Z_1(\alpha),\alpha)>0\}.
\end{align*}
Values at \(Z_1(\alpha)=\infty\) are understood as limits.  Inductively, for
\(m\ge2\), define
\[
 \widetilde{\mathcal F}_m
 :=\{\alpha\in\mathcal N_{m-1}:(-1)^m u'(r,\alpha)<0
       \ \text{for every }r>Z_{m-1}(\alpha)\}.
\]
For \(\alpha\in\mathcal N_{m-1}\setminus\widetilde{\mathcal F}_m\), put
\begin{align*}
 C_{m-1}(\alpha)
 &:=\inf\{r>Z_{m-1}(\alpha):u'(r,\alpha)=0\},\\
 Z_m(\alpha)
 &:=\sup\{r>C_{m-1}(\alpha):(-1)^m u(s,\alpha)<0 \ \text{and} \
        (-1)^m u'(s,\alpha)>0 \ \text{for all }s\in(C_{m-1}(\alpha),r)\},
\end{align*}
and if \(\alpha\in\widetilde{\mathcal F}_m\), we set $C_{m-1}(\alpha)=\infty$.
Then set
\begin{align*}
 \mathcal F_m
 &:=\{\alpha\in\mathcal N_{m-1}\setminus\widetilde{\mathcal F}_m:
       (-1)^m u(Z_m(\alpha),\alpha)<0\},\\
 \mathcal N_m
 &:=\{\alpha\in\mathcal N_{m-1}\setminus\widetilde{\mathcal F}_m:
       u(Z_m(\alpha),\alpha)=0,
       \ (-1)^m u'(Z_m(\alpha),\alpha)>0\},\\
 \mathcal G_m
 &:=\{\alpha\in\mathcal N_{m-1}\setminus\widetilde{\mathcal F}_m:
       u(Z_m(\alpha),\alpha)=u'(Z_m(\alpha),\alpha)=0\},\\
 \mathcal P_m&:=\widetilde{\mathcal F}_m\cup\mathcal F_m.
\end{align*}
Values at \(Z_m(\alpha)=\infty\) are understood as limits.
A finite double zero is impossible by ODE uniqueness; thus \(\mathcal G_m\)
is precisely the class of $(m-1)$-bound states,
\(\mathcal N_m\) is the next-crossing class, and \(\mathcal P_m\) contains
the two noncrossing alternatives.

\begin{lemma}\label{lem:terminal-separation}
Let $m\ge2$ and $\alpha_*\in\mathcal G_m$.
Let $\alpha_1<\alpha_2$ be sufficiently close to
$\alpha_*$, with
$\alpha_j\in\mathcal N_{m-1}
\setminus\widetilde{\mathcal F}_m$,
$j=1,2$.
Put
\[
 w_j(r):=(-1)^{m-1} u(r,\alpha_j)
\]
on the terminal branch following \(C_{m-1}(\alpha_j)\).  Then
\(w_j>0\) and \(w_j'<0\).  Let \(r_j=r_j(s)\) be the inverse of this
branch and set
\[
 y_j(s):=-w_j'(r_j(s))>0,
 \qquad
 D_j(s):=y_j(s)^2+2F(s),
 \qquad
 W_j(s):=r_j(s)\sqrt{D_j(s)}\quad \text{if }D_j(s)>0.
\]
Let
\[
 L_j:=\lim_{r\uparrow Z_m(\alpha_j)}w_j(r).
\]
Then
\begin{equation}\label{eq:Lj-beta}
L_j=
\begin{cases}
0,
\qquad& \alpha_j\in\mathcal G_m\cup\mathcal N_m,\\
\in(0,1),
\qquad& \alpha_j\in\mathcal F_m,\quad Z_m(\alpha_j)<\infty,\\
1,
\qquad& \alpha_j\in\mathcal F_m,\quad Z_m(\alpha_j)=\infty.
\end{cases}
\end{equation}
Suppose that there exists $U\in (\max\{L_1,L_2\},\beta)$ such that 
$D_j(U)>0$,
and
\begin{equation}\label{eq:terminal-initial-order}
 r_1(U)\ge r_2(U),
 \qquad
 W_1(U)<W_2(U).
\end{equation}
Define
\[
 S_j:=\inf\{s\in(L_j,U]:D_j(s)>0\}.
\]
Then $S_1\ge S_2\geq 0$,
and for every \(s\in(S_1,U)\),
\begin{equation*}
 r_1(s)>r_2(s),
 \qquad
 W_1(s)<W_2(s),
 \qquad
 y_1(s)<y_2(s).
\end{equation*}
Moreover,
\begin{equation*}
 W_1(s)-W_2(s)
 \le W_1(U)-W_2(U)<0,
 \qquad s\in(S_1,U],
\end{equation*}
and
\begin{equation*}
 S_j=0
 \quad\Longleftrightarrow\quad
 \alpha_j\in\mathcal G_m\cup\mathcal N_m .
\end{equation*}
\end{lemma}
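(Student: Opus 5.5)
The proof treats the inverse branches as solutions of a first-order system in the amplitude variable $s$ and compares them by a continuity argument started at $s=U$ and run downward. On the terminal branch after $C_{m-1}(\alpha_j)$ we have $w_j>0$ and $w_j'<0$, so $r_j(s)$ is well defined and decreasing in $s$. The radial equation becomes
\[
 \frac{dr_j}{ds}=-\frac1{y_j},\qquad \frac{d}{ds}\bigl(r_j^{2(n-1)}y_j^2\bigr)=2r_j^{2(n-1)}f(s)\Big/\Bigl(-\frac{dr_j}{ds}\Bigr)^{-1}\cdot\frac{-1}{y_j}\cdot(-1).
\]
More usefully, in terms of $D_j=y_j^2+2F(s)$ one has
\[
 \frac{dD_j}{ds}=\frac{2(n-1)y_j}{r_j},
\]
from $E'=-(n-1)u'^2/r$ divided by $du/dr=-y$. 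Then
\[
 \frac{dW_j}{ds}=-\frac{\sqrt{D_j}}{y_j}+\frac{(n-1)y_j}{\sqrt{D_j}}=\frac{(n-2)y_j^2-2F(s)}{y_j\sqrt{D_j}}.
\]
Here $F(s)<0$ for $0<|s|<\beta$, so $W_j$ is increasing in $s$ on $(S_j,U]$.

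The limits \eqref{eq:Lj-beta} follow from the definitions of the shooting sets. If $\alpha_j\in\mathcal G_m\cup\mathcal N_m$, then $w_j\to0$. If $\alpha_j\in\mathcal F_m$ with $Z_m<\infty$, the branch turns at a finite critical point, which by the equation has $f(w_j)\le0$ (a minimum of $w_j$), so $0<L_j<1$. It cannot exceed $\beta$ because energy is positive only there; in fact $f(L_j)<0$ gives $L_j<1$, while $L_j>0$ holds by definition. If $Z_m=\infty$, the limit is a positive zero of $f$, namely $1$.

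The characterization of $S_j$ goes as follows. If $w_j\to0$, the energy $D_j/2$ is decreasing in $r$ with limit at least $0$, and it is strictly positive at every finite $r$ (as in Lemma~\ref{lem:nodal-geometry}(iv), or trivially when crossing zero), so $S_j=0$. In the turning cases, $y_j\to0$ at the lower endpoint while $F(L_j)<0$, so $D_j(L_j)<0$. By continuity and the monotonicity of $E$ in $r$, the set where $D_j>0$ is an interval $(S_j,U]$ with $S_j>L_j>0$.

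The main step is propagating the order. Put $\Delta(s)=W_1(s)-W_2(s)$, negative at $U$. Let $s_0$ be the infimum of those $s$ such that $r_1>r_2$, $W_1<W_2$ and $y_1<y_2$ hold on $(s,U)$. First we obtain the strict inequalities just below $U$. Since $r_1(U)\ge r_2(U)$ and $W_1(U)<W_2(U)$, we get $D_1(U)<D_2(U)$, hence $y_1(U)<y_2(U)$. Because $dr_j/ds=-1/y_j$, it follows that $r_1>r_2$ immediately below $U$.

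Next we show $\Delta$ is nonincreasing as $s$ decreases, i.e. $d\Delta/ds\ge0$. The function $y\mapsto((n-2)y^2-2F)/(y\sqrt{y^2+2F})$ with $F<0$ fixed is decreasing in $y$ on the region where $y^2+2F>0$; this is a one-variable calculus check. Hence $y_1<y_2$ gives $dW_1/ds>dW_2/ds$, so $\Delta$ decreases as $s\downarrow$, and $\Delta(s)\le\Delta(U)<0$. Then $W_1<W_2$ together with $r_1>r_2$ forces $D_1<D_2$, i.e. $y_1<y_2$, and $r_1>r_2$ persists through $dr_j/ds=-1/y_j$. None of the three inequalities can fail first, so $s_0\le S_1$.

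Finally, $D_1<D_2$ wherever both are defined, which gives $S_1\ge S_2$: $D_1>0$ implies $D_2>0$ down to $S_1$. I expect the monotonicity-in-$y$ calculation and the handling of the endpoint $D_1\to0$ (where $W_1$ is only defined on the open set) to be the delicate points.
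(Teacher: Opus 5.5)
Your proposal is correct and follows the paper's proof essentially step for step. The shared steps are the first-order system in the amplitude variable with $D_j'=2(n-1)y_j/r_j$, the formula $W_j'=H_s(y_j)$ with $H_s$ decreasing in $y$ because $F(s)<0$, and the backward continuity argument from $U$. Your endpoint comparison lets $s\downarrow S_2$ in $D_1<D_2$, whereas the paper uses $W_2\to0<W_1(S_2)$; the two are equivalent.

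Two small presentational points. First, the continuity argument only lives on $(\max\{S_1,S_2\},U)$, so you should conclude $s_0\le\max\{S_1,S_2\}$ first and only then deduce $S_1\ge S_2$. Second, your opening display for $\frac{d}{ds}(r_j^{2(n-1)}y_j^2)$ is wrong (it equals $-2r_j^{2(n-1)}f(s)$), but you never use it.
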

\begin{proof}
For $L_j$, if
\(\alpha_j\in\mathcal G_m\cup\mathcal N_m\), the definitions of these
sets immediately give $L_j=0$. Suppose next that \(\alpha_j\in\mathcal F_m\) and
\(Z_m(\alpha_j)<\infty\).  By the definition of \(\mathcal F_m\), $L_j>0$.
Since \(Z_m(\alpha_j)\) is the maximal right endpoint on which $w_j>0$, $w_j'<0$,
and \(w_j(Z_m(\alpha_j))=L_j>0\), we must have $w_j'(Z_m(\alpha_j))=0$.  Moreover, $w_j''(Z_m(\alpha_j))>0$.
As \(f\) is odd, \(w_j\) satisfies
\begin{equation}\label{add6.31}
w_j''+\frac{n-1}{r}w_j'+f(w_j)=0.
\end{equation}
Evaluating \eqref{add6.31} at \(r=Z_m(\alpha_j)\) therefore yields $0<
w_j''(Z_m(\alpha_j))=-f(L_j)$,
and hence $L_j\in (0,1)$.
It remains to consider
\(\alpha_j\in\mathcal F_m\) with \(Z_m(\alpha_j)=\infty\).
In this case,
\begin{equation*}
w_j(r)\rightarrow L_j>0\quad \text{as } r\to\infty.
\end{equation*}
Set
\[
E_j(r):=\frac12 w_j'(r)^2+F(w_j(r)).
\]
Since \(w_j\) satisfies the same radial equation as \(u\), $E_j'=-\frac{n-1}{r}w_j'^2\le0$.
As \(F\) is bounded from below, \(E_j(r)\) has a finite limit as
\(r\to\infty\).  Since \(w_j\to L_j\), it follows 
that \(w_j'^2\) also has a finite limit.  This limit must be zero;
otherwise there would exist \(c>0\) and \(R>0\) such that $-w_j'(r)\ge c$ for every $r\ge R$,
which is incompatible with the convergence of \(w_j\) to the finite
limit \(L_j\).  Hence
\[
w_j'(r)\rightarrow0
\quad\text{as }r\to\infty.
\]
Using again the equation \eqref{add6.31},
we obtain
\[
w_j''(r)\rightarrow-f(L_j)\quad\text{as }r\to\infty.
\]
If \(f(L_j)\ne0\), then \(w_j''\) is eventually bounded away from zero
and has a fixed sign, which contradicts
\(w_j'\to0\).  Thus $f(L_j)=0$.
Since \(L_j>0\), we conclude
that $L_j=1$. This proves \eqref{eq:Lj-beta}.

Since \(w_j\) satisfies \eqref{add6.31}, on each inverse branch,
\begin{equation*}
  r_j'(s)=-\frac1{y_j(s)},
 \qquad
 y_j'(s)=\frac{n-1}{r_j(s)}-\frac{f(s)}{y_j(s)}.
\end{equation*}
Consequently
\begin{equation}\label{eq:terminal-Dprime}
 D_j'(s)=\frac{2(n-1)y_j(s)}{r_j(s)}>0,
\end{equation}
and
\[
 W_j'(s)=H_s(y_j(s)),
 \qquad H_s(y):=
 -\frac{\sqrt{y^2+2F(s)}}{y}
 +(n-1)\frac{y}{\sqrt{y^2+2F(s)}}.
\]
For $y^2+2F(s)>0$, since  \(F(s)<0\) for \(s\in(0,\beta)\), 
\begin{equation*}
  \partial_y H_s(y)
 =
 \frac{2F(s)}
 {y^2\sqrt{y^2+2F(s)}}
 +
 \frac{2(n-1)F(s)}
 {(y^2+2F(s))^{\frac{3}{2}}}
 <0.
\end{equation*}

The functional $W$ is the inverse-branch quantity used in
\cite[Lemma~4.3]{CGHY2011}. For completeness, we give
a direct proof of the separation result needed here,
valid for every $n\ge3$.

By \eqref{eq:terminal-Dprime}, $D_j$ is strictly increasing
on $(L_j,U]$. If
$\alpha_j\in\mathcal G_m\cup\mathcal N_m$, then $L_j=0$ and
\[
\lim_{s\downarrow0}D_j(s)=
\begin{cases}
0, \qquad& \alpha_j\in\mathcal G_m,\\
u'(Z_m(\alpha_j),\alpha_j)^2>0,
   \qquad& \alpha_j\in\mathcal N_m.
\end{cases}
\]
The first limit follows from
Proposition~\ref{prop:terminal-tail}.
Hence $D_j(s)>0$ for $s\in(0,U]$, and $S_j=0$.
If $\alpha_j\in\mathcal F_m$, then $L_j\in (0,\beta)$
and
\[
\lim_{s\downarrow L_j}D_j(s)=2F(L_j)<0.
\]
Thus $D_j$ has a unique zero $S_j\in(L_j,U)$. This proves the asserted characterization of \(S_j\).

The initial ordering \eqref{eq:terminal-initial-order}
gives
\[
\frac{W_1(U)}{r_1(U)}
<
\frac{W_2(U)}{r_2(U)},
\]
and hence $D_1(U)<D_2(U)$ and $y_1(U)<y_2(U)$.
More generally, whenever $r_1\ge r_2$ and
$W_1<W_2$ on the common positivity interval, we have
$D_1<D_2$ and $y_1<y_2$. Therefore,
\[
(r_1-r_2)'=-\frac1{y_1}+\frac1{y_2}<0,
\qquad
(W_1-W_2)'=H_s(y_1)-H_s(y_2)>0,
\]
where the second inequality follows from
$\partial_yH_s<0$. Integrating backward from $U$, we find that,
as long as the above orderings hold,
\[
\begin{aligned}
r_1(s)-r_2(s)
=r_1(U)-r_2(U)
  -\int_s^U(r_1-r_2)'(t)\,\dd t>r_1(U)-r_2(U)\ge0,
\end{aligned}
\]
and
\[
\begin{aligned}
W_1(s)-W_2(s)
=W_1(U)-W_2(U)
  -\int_s^U(W_1-W_2)'(t)\,\dd t<W_1(U)-W_2(U)<0.
\end{aligned}
\]
These inequalities rule out any first contact
as $s$ decreases from $U$.
Consequently,
\[
r_1(s)>r_2(s),\qquad
W_1(s)<W_2(s),\qquad
y_1(s)<y_2(s)
\]
throughout the common positivity interval
$(\max\{S_1,S_2\},U)$.


It remains to compare the lower endpoints.
Suppose that $S_1<S_2$. Then $S_2>0$ and $D_2(S_2)=0$, hence $\alpha_2\in\mathcal F_m$. By the preceding characterization, $S_2\in(L_2,U)$.
Since $r_2(S_2)<\infty$, we have $\lim_{s\downarrow S_2}W_2(s)=0$.
On the other hand, $D_1(S_2)>0$ and
$r_1(S_2)<\infty$, so $W_1(S_2)>0$.
This contradicts $W_1<W_2$ on $(S_2,U)$.
Thus $S_1\ge S_2$.
\end{proof}

\subsection{Local shooting separation property}

\begin{proposition}\label{prop:key}
Let \(k\ge1\) and \(\alpha_*\) be the shooting height
of a \(k\)-node bound state to \eqref{eq:shoot}.  Then there
exists \(\delta>0\) such that
\[
 (\alpha_* -\delta,\alpha_*)\subset\mathcal P_{k+1},
 \qquad
 {(\alpha_*,\alpha_*+\delta)
 \subset\mathcal N_{k+1}.}
\]
\end{proposition}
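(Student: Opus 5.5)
The plan is to compare the terminal branch of $u(\cdot,\alpha)$ for $\alpha$ near $\alpha_*$ with that of $u(\cdot,\alpha_*)$ itself, at the fixed regular level $\abs{u}=1$. Put $m=k+1$. Then $\alpha_*\in\mathcal G_m$, and by Proposition~\ref{cor:variation-finite} all $k$ zeros of $u(\cdot,\alpha_*)$ are simple. Continuous dependence, together with the openness and persistence results \cite[Propositions~2.2--2.3]{CGHY2011} (applicable by Lemma~\ref{lem:external-hyp}), shows that every $\alpha$ near $\alpha_*$ lies in $\mathcal N_{m-1}\setminus\widetilde{\mathcal F}_m$. Moreover, its critical point $C_{m-1}(\alpha)$ is close to $c_*$, and its terminal branch reaches the level $1$ transversally at a radius $R(\alpha)$ close to $r_*(\alpha_*)$.

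\textbf{Ordering at the level $1$.} Apply Lemma~\ref{lem:Wderivative} with $s=1$, after the reflection by $(-1)^{m-1}$. By Proposition~\ref{prop:tail-v}, at $\alpha=\alpha_*$ we have $v(r_*)<0$ and $u'(r_*)<0$, so $R'(\alpha_*)=-v/u'<0$, and $B_0(r_*)>0$. Also $D(1,\alpha_*)=2E(r_*)>0$ by Lemma~\ref{lem:nodal-geometry}(iv), since $E$ is positive along a bound state. So $\partial_\alpha W(1,\alpha_*)>0$. These strict signs persist for $\alpha$ in a small interval $(\alpha_*-\delta,\alpha_*+\delta)$. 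The only step needing care is joint continuity of $v(R(\alpha),\alpha)$ and $B_0(R(\alpha),\alpha)$ in $\alpha$, which holds because $R(\alpha)$ stays in a compact interval. Hence, for $\alpha_1<\alpha_2$ in this interval, $r_1(1)>r_2(1)$ and $W_1(1)<W_2(1)$. We also have $L_j<1$: if $Z_m(\alpha_j)=\infty$ with $L_j=1$, the level $1$ is never reached. This case needs separate handling; see the last paragraph.

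\textbf{Transport to the lower endpoint.} Choose $U$ slightly below $1$, still with $U>\max L_j$ and $D_j(U)>0$, so that the strict order at $1$ is preserved at $U$. Alternatively, run the argument of Lemma~\ref{lem:terminal-separation} directly from level $1$, which works unchanged because $F(1)<0$. Take $\alpha_1=\alpha<\alpha_*=\alpha_2$. Lemma~\ref{lem:terminal-separation} gives $S_1\ge S_2=0$ and $y_1<y_2$ down to $S_1$. If $S_1=0$, then $\alpha\in\mathcal G_m\cup\mathcal N_m$ and $y_1(s)<y_2(s)\to0$ as $s\downarrow0$, which rules out $\mathcal N_m$. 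Hence $\alpha$ would be a second $k$-node state arbitrarily close to $\alpha_*$. Two distinct bound states cannot accumulate at $\alpha_*$, because $W_1(s)-W_2(s)\le W_1(U)-W_2(U)<0$. Their terminal data at $s\downarrow0$ are $W_j=r_j\sqrt{D_j}\to$ the limit of $r\sqrt{2E}$. This limit is $0$ for both, by Proposition~\ref{prop:terminal-tail}: $r\abs{u'}\to0$ and $r^2\abs{F(u)}\lesssim r^2u^{q+1}\to0$, using $u\lesssim r^{2-n}$ and $(q+1)(n-2)>2$. Letting $s\downarrow0$ then gives $0\le W_1(U)-W_2(U)<0$, a contradiction. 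So $S_1>0$ and $\alpha\in\mathcal F_m\subset\mathcal P_m$.

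For $\alpha_2=\alpha>\alpha_*=\alpha_1$, the same lemma gives $S_1=0\ge S_2$, hence $S_2=0$ and $\alpha\in\mathcal G_m\cup\mathcal N_m$. Then $y_2>y_1\to0$ only gives an inequality, so to exclude $\mathcal G_m$ we use the same $W$-limit contradiction as above with the roles reversed. Therefore $\alpha\in\mathcal N_m$.

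\textbf{Main obstacle.} I expect the hardest point to be the first paragraph's case $\alpha\in\widetilde{\mathcal F}_m$ or $Z_m=\infty$ with $L=1$, where level $1$ is not attained. This must be excluded for $\alpha$ near $\alpha_*$. I would argue that $u(\cdot,\alpha)\to u(\cdot,\alpha_*)$ uniformly on $[0,r_*+1]$, and $u(r_*+1,\alpha_*)<1$ with $u'<0$ there. So the level $1$ is crossed, with negative slope, before any new critical point, which gives $L<1$ and the needed transversality.
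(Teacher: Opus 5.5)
Your proposal is correct and follows essentially the same route as the paper. You obtain the strict ordering $r_1(1)>r_2(1)$, $W_1(1)<W_2(1)$ at the regular level $1$ from Lemma~\ref{lem:Wderivative} and Proposition~\ref{prop:tail-v}, persisting by continuity, and then apply Lemma~\ref{lem:terminal-separation}. The $y$-limit argument excludes $\mathcal N_{k+1}$ on the left, and the $W$-gap argument excludes $\mathcal G_{k+1}$ on both sides.

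Two small simplifications are available. First, $U=1$ is directly admissible in Lemma~\ref{lem:terminal-separation}, since $\max\{L_1,L_2\}<1<\beta$, so you do not need to shift $U$ below $1$. Second, $W_j(s)\to0$ for decaying shots follows at once from $0<W_j\le r_jy_j$ (because $F(s)<0$) together with $r\abs{u'}\to0$, so the extra bound on $r^2\abs{F(u)}$ is unnecessary.
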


\begin{proof}
Put \(m:=k+1\), then
\(\alpha_*\in\mathcal G_m\). Lemma~\ref{lem:external-hyp} and
\cite[Proposition~2.3]{CGHY2011} imply, after shrinking \(\delta>0\), that
\[
 (\alpha_*-\delta,\alpha_*+\delta)
 \subset \mathcal N_{m-1}\setminus\widetilde{\mathcal F}_m .
\]
Thus the terminal branch persists for all such \(\alpha\).

Set
\[
 w(r,\alpha):=(-1)^k u(r,\alpha),\qquad
 \psi(r,\alpha):=(-1)^k v(r,\alpha).
\]
Then \(w>0\) and \(w'<0\). 
For the reference profile, the level \(w=1\) is regular.  Hence, after
reducing \(\delta\) if necessary, the implicit function theorem gives a
nearby continuation \(R(\alpha)\) such that
\[
 w(R(\alpha),\alpha)=1,
 \qquad
 w'(R(\alpha),\alpha)<0.
\]
For any
\(r_0\in(R(\alpha),Z_m(\alpha))\), we have
\[
 \lim_{r\uparrow Z_m(\alpha)}w(r,\alpha)
 \le w(r_0,\alpha)
 <w(R(\alpha),\alpha)=1.
\]
At $\alpha=\alpha_*$, Proposition~\ref{prop:tail-v} and the
simultaneous reflection $(u,v)\mapsto(w,\psi)$ give $\psi(R(\alpha_*),\alpha_*)<0$ and $B_0(R(\alpha_*),\alpha_*)>0$.
Since $R(\alpha)$,
$\psi(R(\alpha),\alpha)$ depend continuously on $\alpha$,
and $B_0$ is continuous where the radial derivative does not vanish,
these strict inequalities persist for $\alpha$ in a sufficiently
small neighborhood of $\alpha_*$.  Thus
\[
 \psi(R(\alpha),\alpha)<0,
 \qquad B_0(R(\alpha),\alpha)>0.
\]
Since
\[
 w'(R(\alpha),\alpha)^2+2F(1)
 =2E(R(\alpha),\alpha)>0,
\]
continuity and Lemma~\ref{lem:Wderivative} give, after reducing \(\delta\),
\[
 R'(\alpha)<0,\qquad
 \partial_\alpha W(1,\alpha)>0.
\]
Consequently, for every \(\alpha_1<\alpha_2\) sufficiently close to \(\alpha_*\),  
\[
 r_1(1)>r_2(1),\qquad W_1(1)<W_2(1),
\]
and Lemma~\ref{lem:terminal-separation} applies with \(U=1\).

First let \(\alpha_1=\alpha_*<\alpha_2\). Since
\(\alpha_1\in\mathcal G_m\), by Lemma~\ref{lem:terminal-separation}, we have \(S_1=0\), hence 
\(S_2=0\), and therefore $\alpha_2\in\mathcal G_m\cup\mathcal N_m$.
If \(\alpha_2\in\mathcal G_m\), both shots are decaying. By
Proposition~\ref{prop:terminal-tail}, $r_j(s)y_j(s)\to0$ as $s\rightarrow0$,
and, since \(F(s)<0\),
\[
 0<W_j(s)\le r_j(s)y_j(s)\to0.
\]
This contradicts
\[
 W_1(s)-W_2(s)
 \le W_1(1)-W_2(1)<0.
\]
Thus \(\alpha_2\in\mathcal N_m\).

Now let \(\alpha_1<\alpha_2=\alpha_*\). Then  \(\alpha_2\in\mathcal G_m\) and \(S_2=0\). If \(S_1=0\), then
\(\alpha_1\in\mathcal G_m\cup\mathcal N_m\). The case
\(\alpha_1\in\mathcal G_m\) is excluded by the same \(W\)-gap argument.
If \(\alpha_1\in\mathcal N_m\), Lemma~\ref{lem:terminal-separation} gives
\(y_1(s)<y_2(s)\) for small $s>0$, whereas
\[
 \lim_{s\downarrow0}y_1(s)
 =\abs{u'(Z_m(\alpha_1),\alpha_1)}>0,
 \qquad
 \lim_{s\downarrow0}y_2(s)=\abs{u'(Z_m(\alpha_2),\alpha_2)}=0,
\]
a contradiction. Hence \(S_1>0\), and therefore $\alpha_1\in\mathcal F_m\subset\mathcal P_m$. This completes the proof.
\end{proof}

\subsection{Global shooting topology}

We now use only the openness of  \(\mathcal P_j\) and \(\mathcal N_j\) from
\cite[Proposition~2.2]{CGHY2011} and the local shooting separation property just proved.
Inside \(\mathcal N_{j-1}\), the definitions give the disjoint decomposition
\[
 \mathcal N_{j-1}
 =\mathcal P_j\mathbin{\dot\cup}\mathcal G_j\mathbin{\dot\cup}\mathcal N_j.
\]
Thus the remaining argument is purely one-dimensional topology in the
shooting parameter. The following standard fact is used only to initialize the global
shooting induction. 

\begin{lemma}\label{lem:ground-threshold}
Assume \eqref{eq:main-range}. Then there exists a unique $\alpha_0>1$ such that
\[
\mathcal G_1=\{\alpha_0\}.
\]
Moreover,
\[
\mathcal P_1=(1,\alpha_0),
\qquad
\mathcal N_1=(\alpha_0,\infty).
\]
\end{lemma}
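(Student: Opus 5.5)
The plan is to treat the ground state as the case $k=0$ of the same machinery, so that the local separation step parallels Proposition~\ref{prop:key}, and then finish with connectedness. I would proceed in three steps.

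\textbf{Step 1: $\mathcal P_1$ and $\mathcal N_1$ are open and nonempty.} Openness comes from \cite[Proposition~2.2]{CGHY2011}, which applies by Lemma~\ref{lem:external-hyp}. For $\alpha\in(1,\beta]$ we have $E(r)<E(0)=F(\alpha)\le0$ for $r>0$, so $u$ can never reach $0$. Hence $(1,\beta]\subset\mathcal P_1$. For large $\alpha$, the subcritical blow-up rescaling $u_\alpha(r)=\alpha\,U(\alpha^{(p-1)/2}r)+o(1)$ is used, where $U$ is the Lane--Emden solution, which has a finite zero because $p<(n+2)/(n-2)$. This gives $\alpha\in\mathcal N_1$, a fact already contained in the existence theory of \cite{MTW1990}. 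Since $(1,\infty)=\mathcal P_1\dot\cup\mathcal G_1\dot\cup\mathcal N_1$ and $(1,\infty)$ is connected, $\mathcal G_1\neq\emptyset$. Any $\alpha_0\in\mathcal G_1$ is a ground state with $\alpha_0>\beta$.

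\textbf{Step 2: local separation at any $\alpha_0\in\mathcal G_1$.} Here I redo Proposition~\ref{prop:key} with $m=1$ and $c_*=0$.

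\begin{enumerate}[label=\textup{(\alph*)}]
\item \emph{Variation along the ground state.} Lemma~\ref{lem:Ptail} with $c=0$ gives $P>0$. The first-phase argument (Steps~1--3 of Theorem~\ref{thm:firstphase}, i.e.\ the $\rho$-argument followed by the $B_0$ bridge on $F<0$) applies verbatim on $[0,\infty)$, using $E>0$. It shows that $v$ has exactly one zero $\tau\in(0,r_1)$, that $v(r_1)<0$, and that $B_0(r_1)>0$, where $u(r_1)=1$. This is the $c_*=0$ analogue of Proposition~\ref{prop:tail-v}, with $Q=M=T_2=0$ at the origin replaced by the explicit initial signs from Theorem~\ref{thm:firstphase}.
\item \emph{Ordering at the level $s=1$.} Lemma~\ref{lem:Wderivative} gives $R'(\alpha)<0$ and $\partial_\alpha W(1,\alpha)>0$ near $\alpha_0$.
\item \emph{Propagation to the lower endpoint.} The inverse-branch comparison of Lemma~\ref{lem:terminal-separation} depends only on the equation along a monotone branch, so it applies with $m=1$ on the first branch.
\end{enumerate}

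The same endpoint analysis as in Proposition~\ref{prop:key} then yields $(\alpha_0-\delta,\alpha_0)\subset\mathcal P_1$ and $(\alpha_0,\alpha_0+\delta)\subset\mathcal N_1$. It uses the $W$-gap together with $rW\to0$ from Proposition~\ref{prop:terminal-tail}, and the comparison $y_1<y_2$ near $0$.

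\textbf{Step 3: global topology.} By Step~2, $\mathcal G_1$ consists of isolated points. Suppose $\alpha_0<\alpha_1$ are consecutive points of $\mathcal G_1$; $\mathcal G_1$ is closed in $(1,\infty)$ as the complement of an open set, and the set of its points is discrete. Then $(\alpha_0,\alpha_1)\subset\mathcal P_1\cup\mathcal N_1$, and this interval is connected. It meets $\mathcal N_1$ just to the right of $\alpha_0$ and meets $\mathcal P_1$ just to the left of $\alpha_1$, which is a contradiction.

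Every point of $\mathcal G_1$ lies in $(\beta,\infty)$ and has $\mathcal N_1$ immediately to its right. If there were infinitely many points, either they accumulate somewhere, which is impossible by discreteness, or consecutive ones exist, which we just excluded. Hence $\mathcal G_1=\{\alpha_0\}$. Connectedness of $(1,\alpha_0)$ and $(\alpha_0,\infty)$, together with Step~2, then gives $\mathcal P_1=(1,\alpha_0)$ and $\mathcal N_1=(\alpha_0,\infty)$.

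\textbf{Main obstacle.} I expect step (a) to be the main obstacle: verifying that the first-phase sign propagation, and in particular $B_0(r_1)>0$ and $E>0$, holds for the ground state on an infinite interval with no further critical point. The key point is that the bridge monotonicity $B_0'>0$ persists on the whole region $0<u<\beta$ down to $r=\infty$. Alternatively, one could simply cite the known uniqueness of zero-mass double-power ground states (\cite{KMPT1992}, \cite{SerrinTang2000}) together with the threshold structure, if a self-contained argument is not required.
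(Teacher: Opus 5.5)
Your proposal is correct, but it reaches uniqueness by a different route from the paper. The paper does not rerun its separation machinery at $k=0$. It takes existence of a ground state from Lemma~\ref{lem:external-hyp} and gets uniqueness externally from the Serrin--Tang criterion \cite[Theorem~1]{SerrinTang2000}, using that $K(s)=sf'(s)/f(s)$ decreases on $(1,\infty)$. It then identifies $\mathcal P_1$ and $\mathcal N_1$ by connectedness, using $\beta\in\mathcal P_1$, openness from \cite[Proposition~2.2]{CGHY2011}, and $\mathcal N_1\neq\varnothing$ from the existence of a $1$-node state.

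You instead get existence from connectedness. You get uniqueness by proving the local separation property at every point of $\mathcal G_1$, exactly as in Proposition~\ref{prop:key} with $m=1$, and then applying the one-dimensional topology of Lemma~\ref{lem:jumping}. Your route is self-contained and makes the induction uniform from $k=0$. Its cost is that you must check the $m=1$ version of Lemma~\ref{lem:terminal-separation}, which is stated only for $m\ge2$. The proof does carry over; note that near $\alpha_0$ the alternative $L_j=1$ (with $Z_1=\infty$) is excluded because the level $u=1$ is crossed transversally, so $U=1$ is admissible. The paper's route is shorter but rests on an outside uniqueness theorem.

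The step you flag as the main obstacle is lighter than you expect. Only Steps~1--2 of Theorem~\ref{thm:firstphase} are needed, together with $P>0$ from Lemma~\ref{lem:Ptail}. They give $v(r_1)<0$ and $Q(r_1)=T_1(r_1)>0$. Then $B_0(r_1)=Q(r_1)-2F(1)r_1^{n-1}v(r_1)/u'(r_1)>Q(r_1)>0$, since $F(1)<0$ and $v(r_1)/u'(r_1)>0$. The bridge continuation to $r=\infty$ is not required for the ordering at level $1$.

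There are two minor slips. In the endpoint analysis, the fact you need is $0<W_j(s)\le r_j(s)y_j(s)\to0$, which comes from $r\abs{u'}\to0$; it is not ``$rW\to0$''. In the blow-up rescaling, the error term should be $o(\alpha)$, not $o(1)$.
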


\begin{proof}
Existence follows from Lemma~\ref{lem:external-hyp}, while uniqueness
follows from the Serrin--Tang criterion
\cite[Theorem~1]{SerrinTang2000}, since for $s>1$,
\[
K(s):=\frac{s f'(s)}{f(s)}
=\frac{p s^{d}-q}{s^{d}-1},
\qquad
K'(s)
=-\frac{(d)^2s^{d-1}}
       {(s^{d}-1)^2}<0.
\]
Thus $\mathcal G_1=\{\alpha_0\}$.

Since \(E(r,\alpha_0)\to0\) as \(r\to\infty\) and \(E\) is strictly decreasing, we have \(E(r,\alpha_0)>0\) for every finite \(r\).
Consequently
\(F(\alpha_0)=E(0)>0\), so \(\alpha_0>\beta\). On the other hand, for the shot with
\(\alpha=\beta\), one has  $E(0,\beta)=F(\beta)=0$. 
Since \(E\) is decreasing,
this shot cannot reach \(u=0\), and therefore
\(\beta\in\mathcal P_1\).

By \cite[Proposition~2.2]{CGHY2011}, the sets
\(\mathcal P_1\) and \(\mathcal N_1\) are open and $(1,\infty)
 =\mathcal P_1\mathbin{\dot\cup}
  \mathcal G_1\mathbin{\dot\cup}\mathcal N_1$.
Since \(\beta\in(1,\alpha_0)\cap\mathcal P_1\), connectedness yields $\mathcal P_1=(1,\alpha_0)$.
Moreover, Lemma~\ref{lem:external-hyp} provides a \(1\)-node bound state, so \(\mathcal N_1\ne\varnothing\). Connectedness then gives $\mathcal N_1=(\alpha_0,\infty)$.
\end{proof}

\begin{lemma}\label{lem:jumping}
Assume that $\mathcal N_{j-1}=
\mathcal P_j\mathbin{\dot\cup}
\mathcal G_j\mathbin{\dot\cup}
\mathcal N_j$, $j\ge2$,
where $\mathcal P_j$ and $\mathcal N_j$ are relatively open in
$\mathcal N_{j-1}$.
If $\mathcal N_{j-1}=(\alpha_{j-2},\infty)$
and $\mathcal G_j\neq\varnothing$, then there exists a unique
$\alpha_{j-1}>\alpha_{j-2}$ such that
\[
\mathcal G_j=\{\alpha_{j-1}\},
\qquad
\mathcal P_j=(\alpha_{j-2},\alpha_{j-1}),
\qquad
\mathcal N_j=(\alpha_{j-1},\infty).
\]
\end{lemma}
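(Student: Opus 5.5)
We combine the relative openness of $\mathcal P_j,\mathcal N_j$ in the interval $\mathcal N_{j-1}=(\alpha_{j-2},\infty)$ with the local shooting separation property of Proposition~\ref{prop:key}. Since every element of $\mathcal G_j$ is the shooting height of a $(j-1)$-node bound state, Proposition~\ref{prop:key} (with $k=j-1\ge1$) says that each $\alpha_*\in\mathcal G_j$ has a left punctured neighborhood inside $\mathcal P_j$ and a right punctured neighborhood inside $\mathcal N_j$. In particular, $\mathcal G_j$ consists of isolated points.

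\textbf{Step 1 (uniqueness).} Suppose $\alpha_*<\alpha_{**}$ both lie in $\mathcal G_j$. Let $I=(\alpha_*,\alpha_{**})$ and consider
\[
 \gamma:=\sup\{t\in(\alpha_*,\alpha_{**}]:(\alpha_*,t)\subset\mathcal N_j\}.
\]
This set is nonempty by the right-neighborhood property at $\alpha_*$, so $\gamma>\alpha_*$. We have $\gamma\le\alpha_{**}$, and indeed $\gamma<\alpha_{**}$, because points immediately to the left of $\alpha_{**}$ lie in $\mathcal P_j$.

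We now classify $\gamma$. It cannot lie in $\mathcal N_j$, since openness would extend the interval past $\gamma$. It cannot lie in $\mathcal P_j$, since openness would put a whole neighborhood of $\gamma$, hence points of $(\alpha_*,\gamma)$, inside $\mathcal P_j$, contradicting disjointness. So $\gamma\in\mathcal G_j$. But then Proposition~\ref{prop:key} puts a left punctured neighborhood of $\gamma$ inside $\mathcal P_j$, which again meets $(\alpha_*,\gamma)\subset\mathcal N_j$. This contradiction shows $\mathcal G_j$ is a singleton $\{\alpha_{j-1}\}$, with $\alpha_{j-1}>\alpha_{j-2}$ because $\mathcal G_j\subset\mathcal N_{j-1}$.

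\textbf{Step 2 (the intervals).} With $\mathcal G_j=\{\alpha_{j-1}\}$, the interval $(\alpha_{j-2},\alpha_{j-1})$ is connected and is the disjoint union of the two relatively open sets $\mathcal P_j\cap(\alpha_{j-2},\alpha_{j-1})$ and $\mathcal N_j\cap(\alpha_{j-2},\alpha_{j-1})$. Hence it lies entirely in one of them. Proposition~\ref{prop:key} shows the part near $\alpha_{j-1}$ is in $\mathcal P_j$, so $(\alpha_{j-2},\alpha_{j-1})=\mathcal P_j\cap(\alpha_{j-2},\alpha_{j-1})$.

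The same argument on $(\alpha_{j-1},\infty)$ gives that interval inside $\mathcal N_j$, using the right-neighborhood property. Since $\mathcal N_{j-1}$ is the disjoint union of the three sets, this yields exactly $\mathcal P_j=(\alpha_{j-2},\alpha_{j-1})$ and $\mathcal N_j=(\alpha_{j-1},\infty)$.

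\textbf{Main obstacle.} The only delicate point is the bookkeeping in Step~1. One must ensure that the first point $\gamma$ where $\mathcal N_j$ stops is genuinely a contact point, i.e.\ is not in the open sets. This follows from relative openness in $\mathcal N_{j-1}$: all points lie in $\mathcal N_{j-1}=(\alpha_{j-2},\infty)$, which is itself an open interval, so relative openness is ordinary openness.
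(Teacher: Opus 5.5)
Your proof is correct and follows essentially the same route as the paper: a supremum (first-exit) argument combining the relative openness of $\mathcal P_j,\mathcal N_j$ with the two-sided local separation of Proposition~\ref{prop:key}, followed by a connectedness argument. The only difference is bookkeeping. The paper runs the supremum argument from a single $\gamma\in\mathcal G_j$ out to $\infty$, obtaining $(\gamma,\infty)\subset\mathcal N_j$, which gives uniqueness and the $\mathcal N_j$ interval at once. You instead run it between two hypothetical points of $\mathcal G_j$ and then apply connectedness on both sides of $\alpha_{j-1}$.
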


\begin{proof}
Take $\gamma\in\mathcal G_j$. By Proposition~\ref{prop:key},
$\mathcal N_j$ contains a right neighborhood of $\gamma$.
We claim that $(\gamma,\infty)\subset\mathcal N_j$.
Otherwise, let
\[
b=\sup\{s>\gamma:(\gamma,s)\subset\mathcal N_j\}<\infty.
\]
Since $\mathcal N_j$ is relatively open, $b\notin\mathcal N_j$.
Also $b\notin\mathcal P_j$, because the openness of $\mathcal P_j$
would contradict $(\gamma,b)\subset\mathcal N_j$.
Hence $b\in\mathcal G_j$. The local shooting separation property at $b$ gives $(b-\delta,b)\subset\mathcal P_j$
for some $\delta>0$, again contradicting
$(\gamma,b)\subset\mathcal N_j$. Thus
\[
(\gamma,\infty)\subset\mathcal N_j.
\]
This immediately implies that $\gamma$ is the only point of
$\mathcal G_j$: if $\eta>\gamma$ belonged to $\mathcal G_j$, then
$\eta\in\mathcal N_j$; while if $\eta<\gamma$ belonged to
$\mathcal G_j$, applying the preceding argument to $\eta$ would give
$\gamma\in\mathcal N_j$.

Therefore
\[
\mathcal G_j=\{\gamma\}.
\]
Since
\[
(\alpha_{j-2},\gamma)
=
\bigl(\mathcal P_j\cap(\alpha_{j-2},\gamma)\bigr)
\mathbin{\dot\cup}
\bigl(\mathcal N_j\cap(\alpha_{j-2},\gamma)\bigr),
\]
and both sets are relatively open, the connectedness of
$(\alpha_{j-2},\gamma)$, together with the left-hand local shooting separation property,
yields
\[
(\alpha_{j-2},\gamma)\subset\mathcal P_j.
\]
Setting $\alpha_{j-1}=\gamma$ completes the proof.
\end{proof}


\begin{proof}[Proof of Theorem~\ref{thm:main}]
Lemma~\ref{lem:external-hyp} gives $\mathcal G_{k+1}\neq\varnothing$
for every \(k\ge1\), since a profile in \(\mathcal G_{k+1}\) has
exactly \(k\) zeros.
For \(k\ge1\), Proposition~\ref{prop:tail-v} provides the terminal
transversality
\[
v(r_*)<0,\qquad B_0(r_*)>0.
\]
Here Proposition~\ref{prop:tail-v} relies on the finite-phase induction
summarized in Proposition~\ref{cor:variation-finite}, which in turn
follows from Theorem~\ref{thm:firstphase},
Corollary~\ref{addcor2}, and Proposition~\ref{prop:phase}.
Consequently, Proposition~\ref{prop:key} yields the local shooting separation property
at every point of \(\mathcal G_j\), \(j\ge2\).

Lemma~\ref{lem:ground-threshold} gives the initial level \(j=1\).
Applying Lemma~\ref{lem:jumping} inductively, we conclude that $\mathcal G_{k+1}$
consists of exactly one shooting height for every \(k\ge1\).
Since the radial initial-value problem is uniquely determined by the
shooting height \(\alpha=u(0)>0\), there is at most one radial bound
state with exactly \(k\) nodes and positive value at the origin.
Together with the existence supplied by
Lemma~\ref{lem:external-hyp}, this proves Theorem~\ref{thm:main}.
\end{proof}

\begin{corollary}\label{cor:shooting-structure}
Let $\alpha_k=u_k(0)$ for $k\ge1$, and let
$\alpha_0$ denote the ground-state shooting height.
Then
\[
\beta<\alpha_0<\alpha_1<\alpha_2<\cdots.
\]
Moreover, the shooting variation
$v=\partial_\alpha u$ along the $k$-node
bound state has exactly $k+1$ simple zeros
in $(0,\infty)$.
\end{corollary}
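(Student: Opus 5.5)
The corollary has two parts, and I would read both off the shooting structure already built in the proof of Theorem~\ref{thm:main}.

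\textbf{Ordering of the shooting heights.} First, $\alpha_0>\beta$ is given by Lemma~\ref{lem:ground-threshold}, which proves $F(\alpha_0)=E(0)>0$. Next I would use the induction: Lemma~\ref{lem:ground-threshold} gives $\mathcal N_1=(\alpha_0,\infty)$, and Lemma~\ref{lem:jumping}, applied successively, gives
\[
\mathcal G_{j}=\{\alpha_{j-1}\},\qquad \mathcal N_{j}=(\alpha_{j-1},\infty),\qquad \alpha_{j-1}>\alpha_{j-2}.
\]
The identification to check is that $\mathcal G_{k+1}$ is exactly the set of shooting heights of $k$-node bound states. This holds because a finite double zero is impossible by ODE uniqueness, so $\alpha_k=u_k(0)$ is the unique element of $\mathcal G_{k+1}$. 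The strict chain $\alpha_0<\alpha_1<\cdots$ is then the inequality $\alpha_{j-1}>\alpha_{j-2}$ from Lemma~\ref{lem:jumping}, applied for every $j\ge2$.

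\textbf{Counting the zeros of the variation.} For the $k$-node state, Proposition~\ref{cor:variation-finite} gives exactly one zero of $v$ in $(0,z_1)$ and exactly one in each $(z_{i-1},z_i)$ for $2\le i\le k$. Together these contribute $k$ zeros on $(0,z_k)$. To control the remaining intervals $[z_{i},c_i]$, I would use the phase-level statements:
\begin{itemize}
\item Theorem~\ref{thm:firstphase} says $v$ has exactly one zero on $[0,c_1]$, and it lies before $z_1$.
\item Proposition~\ref{prop:phase}(i) says that on each finite phase $[c_{i-1},c_i]$, $i\ge2$, the only zero of $v$ lies before the level $u=1$, hence before $z_i$.
\end{itemize}
Consequently $v$ has exactly $k$ zeros on $(0,c_k]$. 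On the terminal branch, Proposition~\ref{prop:tail-v} (whose hypotheses are supplied by Proposition~\ref{cor:variation-finite}, namely $Q(c_k),M(c_k),T_2(c_k)>0$) gives exactly one further zero $\tau_*\in(c_k,r_*)$ and none beyond it. The total is therefore $k+1$.

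Two details close the count. None of these zeros sits at a critical point $c_i$, since $v(c_i)\ne0$ follows from $Q(c_i)=c_i^nf(u(c_i))v(c_i)>0$. The counts on adjacent closed intervals do not overlap, because each zero lies strictly inside its phase. Finally, every positive zero of $v$ is simple, as observed right after \eqref{eq:var}.

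The main obstacle I expect is bookkeeping rather than analysis: making the interval partition consistent between the nodal intervals of Proposition~\ref{cor:variation-finite} and the phases $[c_{i-1},c_i]$, so that no zero in $(z_i,c_i]$ is double-counted or missed. Proposition~\ref{prop:phase} and Theorem~\ref{thm:firstphase} resolve this, since they place each zero before the level $u=1$. The case $k\ge1$ with $c_0=0$ is handled by Theorem~\ref{thm:firstphase}, which is why it is treated separately.
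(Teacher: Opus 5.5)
Your proof is correct and follows the paper's own argument. The ordering comes from Lemma~\ref{lem:ground-threshold} together with the inductive use of Lemma~\ref{lem:jumping}, and the count of $k+1$ simple zeros comes from Propositions~\ref{cor:variation-finite} and~\ref{prop:tail-v}, with simplicity from the variational equation; your explicit appeal to Theorem~\ref{thm:firstphase} and Proposition~\ref{prop:phase}(i) to rule out zeros on $(z_k,c_k]$ and at the $c_i$ supplies bookkeeping that the paper's one-line proof leaves implicit.
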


\begin{proof}
The strict ordering follows from
Lemmas~\ref{lem:ground-threshold}
and~\ref{lem:jumping}.
The exact zero count follows from
Propositions~\ref{cor:variation-finite}
and \ref{prop:tail-v}.
All zeros are simple by uniqueness
for the variational equation.
\end{proof}


\vspace{.3cm}

\noindent\textbf{Conflict of interest.}
The authors declare that they have no conflict of interest.

\medskip
\noindent\textbf{Data availability.}
Data sharing is not applicable to this article as no data were created or analyzed in this study.


\end{document}